\documentclass[12pt,openany]{article}
\oddsidemargin    =0.5cm \evensidemargin   =0.5cm \textwidth =16cm
\textheight       =22cm \headheight       =-1cm
\usepackage{amssymb, amsmath, latexsym, a4,amsthm,amsfonts}
\usepackage[latin1]{inputenc}
\usepackage{graphicx}
\usepackage[all]{xy}
\usepackage[usenames]{color}

\newtheorem{definition}{Definition}[section]
\newtheorem{theorem}[definition]{Theorem}
\newtheorem{proposition}[definition]{Proposition}
\newtheorem{lemma}[definition]{Lemma}
\newtheorem{corollary}[definition]{Corollary}
\newtheorem{remark}[definition]{Remark}
\newtheorem{example}[definition]{Example}

\newcommand{\ab}{\ensuremath{\mathsf{ab}}}

\newcommand{\Ker}{{\sf Ker}}

\newcommand{\Lie}{\ensuremath{\mathsf{Lie}}}

\newcommand{\Lieh}{\ensuremath{\mathfrak{h}}}
\newcommand{\Lieg}{\ensuremath{\mathfrak{g}}}
\newcommand{\Liet}{\ensuremath{\mathfrak{t}}}
\newcommand{\Lieq}{\ensuremath{\mathfrak{q}}}
\newcommand{\Liea}{\ensuremath{\mathfrak{a}}}
\newcommand{\Lieb}{\ensuremath{\mathfrak{b}}}
\newcommand{\Liem}{\ensuremath{\mathfrak{m}}}

\newcommand{\Lien}{\ensuremath{\mathfrak{n}}}
\newcommand{\Lief}{\ensuremath{\mathfrak{f}}}
\newcommand{\Lies}{\ensuremath{\mathfrak{s}}}
\newcommand{\Lier}{\ensuremath{\mathfrak{r}}}
\newcommand{\Liep}{\ensuremath{\mathfrak{p}}}

\newcommand{\Liej}{\ensuremath{\mathfrak{j}}}
\newcommand{\Lieu}{\ensuremath{\mathfrak{u}}}

\newcommand{\im}{{\sf Im}}
\newcommand{\Id}{{\sf id}}

\hyphenation{cat-e-go-ri-cal e-quiv-a-len-ces co-ker-nels gen-er-al-ised ex-act-ness ex-ten-sion Ja-ne-lid-ze pro-jec-tive nil-po-tent co-ker-nel}

\newbox\pullbackbox
\setbox\pullbackbox=\hbox{\xy 0;<1mm,0mm>: \POS(4,0)\ar@{-} (0,0) \ar@{-} (4,4)
\endxy}

\newdir{>>}{{}*!/3.5pt/:(1,-.2)@^{>}*!/3.5pt/:(1,+.2)@_{>}*!/7pt/:(1,-.2)@^{>}*!/7pt/:(1,+.2)@_{>}}
\newdir{ >>}{{}*!/8pt/@{|}*!/3.5pt/:(1,-.2)@^{>}*!/3.5pt/:(1,+.2)@_{>}}
\newdir{ |>}{{}*!/-3.5pt/@{|}*!/-8pt/:(1,-.2)@^{>}*!/-8pt/:(1,+.2)@_{>}}
\newdir{ >}{{}*!/-8pt/@{>}}
\newdir{>}{{}*:(1,-.2)@^{>}*:(1,+.2)@_{>}}
\newdir{<}{{}*:(1,+.2)@^{<}*:(1,-.2)@_{<}}

\begin{document}

\centerline{\bf Some properties of the Schur multiplier and}
\centerline{\bf stem covers of Leibniz crossed modules}

\bigskip
\centerline{\bf José Manuel Casas $^1$, Hajar Ravanbod $^{2}$ }
\bigskip \bigskip

\centerline{$^1$Dpto. Matemática Aplicada, Universidade de Vigo,  E. E. Forestal}
\centerline{Campus Universitario A Xunqueira, 36005 Pontevedra, Spain}
\centerline{ {E-mail address}: jmcasas@uvigo.es}
\bigskip

 \centerline{$^2$Faculty of Mathematical Sciences, Shahid Beheshti
 	University,}
 \centerline{ G. C., Tehran, Iran}
 \centerline{{E-mail address}: hajarravanbod@gmail.com}
\bigskip

\date{}

\bigskip \bigskip \bigskip

\noindent {\bf Abstract:}
In this article we investigate the interplay between stem covers, the Schur multiplier of Leibniz crossed modules and the non-abelian exterior product of Leibniz algebras. In concrete, we obtain a six-term exact sequence associated to a central extension of Leibniz crossed modules, which is useful to characterize stem covers. We show the existence of stem covers and determine the structure of all stem covers of Leibniz crossed modules. Also, we give the connection between the stem cover of a Lie crossed module in the categories of Lie and Leibniz crossed modules respectively.

\bigskip

\noindent {\bf Key words:} Leibniz algebra, Leibniz crossed module, Schur multiplier, stem cover, stem extension.

\bigskip
\noindent {\bf 2010  Mathematics Subject Classification:} 17A32, 17B55, 18G05.

\section{Introduction}
Leibniz algebras are algebraic structures introduced by  Bloh  in  $\cite{BL,BL2}$   as a non-skew symmetric generalization of Lie algebras. In the 90's Loday rediscovered and developed them \cite{Lo 1, Lo 2} when he handled periodicity phenomena in algebraic K-theory \cite{LP}. This  structure is not only important by algebraic reasons, but also for its applications in other branches such as Geometry  or Physics (see for instance \cite{DW, FLRO, HM, Lod}).

A {Leibniz algebra} is a $\mathbb{K}$-vector space ${\Lieq}$ equipped with a linear map  $[-,-]:{\Lieq} \times {\Lieq}\longrightarrow {\Lieq} $ satisfying the Leibniz identity
$[x, [y, z]] = [[x, y], z] - [[x, z], y]$, for all $x, y, z \in {\Lieq}$. If we assume $[x,x]=0$ for all $x \in \Lieq$, then $\Lieq$ is a Lie algebra.

 An active research line consists in the extension of properties from Lie algebras to Leibniz algebras. As an example of these generalizations, stem covers and stem extensions of a Leibniz algebras where studied in \cite{CL2};
in  \cite{Gnedbaye} was  extended to Leibniz algebras the notion of non-abelian tensor product of Lie algebras introduced by Ellis in \cite{Ellis};
 in \cite{DGMKh}, authors investigated the interplay between the non-abelian tensor and exterior product of Leibniz algebras with the low dimensional Leibniz homology of  Leibniz algebras.

Crossed modules of groups were described for the first time by Whitehead  in the late 1940s \cite{Wh} as an algebraic model for path-connected CW-spaces
whose homotopy groups are trivial in dimensions greater than 2. Crossed modules of different algebraic objects  can be regarded as algebraic structures that generalize simultaneously the notions of normal subobject and module. They were used in many branches of mathematics such as category theory, cohomology of algebraic structures, differential geometry or Physics. Also crossed modules were defined in different categories such as  Lie algebras, commutative algebras, etc. \cite{Po, KL}, either as tools or as algebraic structures in their own right.
Leibniz crossed modules were introduced in \cite{LP} to study the cohomology of Leibniz algebras. It was also used as coefficients for non-abelian (co)homology of Leibniz algebras in \cite{CKL}. Since Leibniz crossed modules  are generalizations of Lie crossed modules and Leibniz algebras, it is therefore of interest to extend results from  Leibniz algebra and Lie crossed modules to Leibniz crossed modules.

Accordingly, in this paper we show that Leibniz crossed modules constitutes a semi-abelian category with enough projective objects, hence the Baer invariant $\frac{(\Lieu, \Lier, \mu) \cap [(\Liem, \Lief, \mu),(\Liem, \Lief, \mu)]}{[(\Lieu, \Lier, \mu), (\Liem, \Lief, \mu)]}$ associated to the projective presentation $0 \longrightarrow (\Lieu, \Lier, \mu) \longrightarrow (\Liem, \Lief, \mu) \overset{(\pi_1,\pi_2)} \longrightarrow (\Lien, \Lieq, \delta) \longrightarrow 0$, called the Schur multiplier of the Leibniz crossed module $(\Lien, \Lieq, \delta)$ and denoted  by ${\cal M}(\Lien, \Lieq, \delta)$, plays a central role in the study of connections with the non-abelian exterior product of Leibniz algebras, in the study of stem covers of Leibniz crossed modules and in the study of connections with stem cover of Lie crossed modules.

The paper is organized as follows: in section \ref{prelim} we recall some basic categorical concepts such as the commutator of two ideals, the center, central extensions of  Leibniz crossed modules, etc. Moreover, we show the category of Leibniz crossed modules has enough projective objects.  In section \ref{multiplier} we describe the Schur multiplier of a Leibniz crossed module and analyze its interplay with the non-abelian exterior product of Leibniz algebras given in \cite{DGMKh}. In concrete we show that ${\cal M}(\Lien,\Lieq,\delta)  \cong  \Ker \left((\Lieq \curlywedge \Lien,\Lieq \curlywedge \Lieq, \Id \curlywedge \delta) \longrightarrow (\Lien,\Lieq,\delta) \right)$ and the existence of the six-term exact sequence
\[	
\xymatrix{
 & (I,\Lieb \curlywedge \Liep,\sigma \curlywedge \Id) \ar[r] & \mathcal{M}(\Lieh, \Liep, \sigma) \ar[r] & \mathcal{M}(\Lien, \Lieq, \delta)   \ar`r[dl] `[l]`[llld] `[d] [lld]\\
& (\Liea,\Lieb,\sigma) \ar[r] & (\Lieh,\Liep,\sigma)_{\rm ab} \ar[r] & (\Lien,\Lieq,\delta)_{\rm ab} \ar[r] & 0.
}
\]
associated to the  central extension of Leibniz crossed modules $(e): 0 \longrightarrow (\Liea,\Lieb,\sigma)$ $\longrightarrow (\Lieh,\Liep,\sigma)\stackrel{\varphi } \longrightarrow (\Lien,\Lieq,\delta) \longrightarrow 0$.

In section \ref{stem} we characterize and study properties of stem covers of Leibniz crossed modules through the Schur multiplier, in particular we show the existence of stem covers for an arbitrary Leibniz crossed module and similarly to a result of Schur in the group case \cite{Schur}, we determine the structure of all stem covers of a Leibniz crossed module whose Schur multiplier is finite dimensional. Finally, in last section, we study the connections between the  stem covers of a Lie crossed module in the categories of Lie and Leibniz crossed modules, respectively.

\section{Preliminaries on Leibniz crossed modules} \label{prelim}
This section is devoted to recall some basic definitions in the category of Leibniz
crossed modules, which
will be needed in the sequel.

\begin{definition} \cite{LP}
Let ${\Liem}$ and ${\Lien}$ be Leibniz algebras. A Leibniz action of ${\Liem}$ on $\Lien$ is a couple
of bilinear maps $\Liem \times \Lien \longrightarrow \Lien, (m,n)\longmapsto ^mn$,
and $\Lien \times \Liem \longrightarrow \Lien, (n,m) \longmapsto n^m$, satisfying
the following axioms:
\begin{center}
$^{[m,m']}n=^m{(^{m'}n)}+(^mn)^{m'}, \qquad ^m{[n,n']}=[^mn,n']-[^m{n'},n],$\\
$n^{[m,m']}={(n^m)}^{m'}-(n^{m'})^m, \qquad {[n,n']}^m= [n^m, n']+[n,{n'}^m],$\\
$^m{(^{m'}n)}=-^m{(n^{m'})}, \qquad [n,^m{n'}]=-[n,{n'}^m]$,
\end{center}
for each $m,m'\in \Liem, n, n'\in  \Lien$.
\end{definition}

\begin{definition}\cite{LP}
  A Leibniz crossed module $(\Lien,\Lieq, \delta)$ is a homomorphism of Leibniz algebras
$\delta : \Lien \longrightarrow \Lieq$ together with a Leibniz action of $\Lieq$ on $\Lien$ such that
\begin{enumerate}
\item[(i)] $\delta(^qn)=[q,\delta(n)],\qquad \delta(n^q)=[\delta(n),q],$
\item[(ii)] $^{\delta(n_1)}{n_2}=[n_1,n_2]={n_1}^{\delta(n_2)}$,
\end{enumerate}
for all $q \in \Lieq$ and $n_1,n_2 \in \Lien$.
\end{definition}

\begin{example}\label{2.1}\
\begin{enumerate}
\item[(i)] Let $\Lien$ be a two-sided ideal of a Leibniz algebra $\Lieq$, then $(\Lien,\Lieq,i)$ is a Leibniz crossed module, where $i$ is the inclusion map and the Leibniz action of $\Lieq$
on $\Lien$ is given by the Leibniz bracket. In this way, every Leibniz algebra $\Lieq$ can be regarded as crossed module in the two obvious ways $(0,\Lieq,i)$ and $(\Lieq,\Lieq, \Id )$.

\item[(ii)] For any $\Lieq$-module $\Liem$ the trivial map $0 : \Liem \to \Lieq$ is a crossed module with the trivial action of $\Lieq$ on the abelian Leibniz algebra $\Liem$.

\item[(iii)] Any homomorphism of Leibniz algebras $\delta : \Lien \to \Lieq$, with $\Lien$ abelian and $\im(\delta)$
in the center of $\Lieq$, provides a crossed module with $\Lieq$ acting trivially on $\Lien$.
\end{enumerate}
\end{example}

\begin{definition} \cite{CFCGMKh}
A homomorphism of Leibniz crossed modules,  $(\varphi,\psi):(\Lien,\Lieq,\delta)$ $\longrightarrow (\Lien',\Lieq',\delta')$ is a pair of Leibniz algebra homomorphisms $\varphi:\Lien
\longrightarrow \Lien'$ and $\psi:\Lieq \longrightarrow \Lieq'$ such that $\psi \circ \delta=\delta' \circ \varphi$ and $\varphi$ preserves the Leibniz action of $\Lieq$ via $\psi$, i.e. $\varphi(^qn)={^{\psi(q)}}{\varphi(n)}$
and $\varphi(n^q)={\varphi(n)}^{\psi(q)}$, for all $n\in \Lien$ and $q \in \Lieq$.

 A homomorphism  of crossed modules $(\varphi,\psi)$ is called {injective} if both $\varphi$ and $\psi$ are injective Leibniz algebra homomorphism. Also, $(\varphi,\psi)$ is called {surjective} if $\varphi$ and $\psi$ are onto maps.
\end{definition}

It is clear that Leibniz crossed modules constitute a category, which is denoted by $\mathbf{XLb}$. Theorem 10 in \cite{CKL}, in the particular case $n=2$,  provides the equivalence between the categories
$\mathbf{XLb}$ and ${\bf Cat}^1$-{\bf Lb} of ${\rm cat}^1$-Leibniz algebras (see also \cite{CFCGMKh}). Moreover, in \cite{BCDU} is showed that ${\bf Cat}^1$-{\bf Lb} is a modified category of interest,
which is a semi-abelian category. Hence $\mathbf{XLb}$ is a semi-abelian category (see also \cite{CFCGMKh}). Subobjects and normal subobjects in $\mathbf{XLb}$ are the crossed submodules and crossed ideals of a crossed module, that is, $(\Liem,\Lieg,\partial)$ is a crossed submodule of a crossed module $(\Lien,\Lieq,\delta)$ if $\Liem$ is a subalgebra of $\Lien$, $\Lieg$ is a subalgebra of $\Lieq$, $\partial = \delta_{\mid \Liem}$ and the Leibniz action of $\Lieg$ over $\Liem$ is the restriction of the Leibniz action of $\Lieq$ over $\Lien$. A crossed submodule $(\Liem,\Lieg,\partial)$ is said to be a crossed ideal of $(\Lien,\Lieq,\delta)$ when the action of $\Lieg$ over $\Lien$ belongs to $\Liem$ and the Leibniz action of $\Lieq$ over $\Liem$ belongs to $\Liem$.

 According to \cite{EVDL3} (see also \cite{EVDL1, EVDL2}), we have the following notions corresponding to the category $\mathbf{XLb}$:
\begin{itemize}
  \item  The \emph{commutator} of two crossed ideals $(\Lies, \Lieh, \delta)$ and $(\Liet, \Liej, \delta)$ of a crossed module of Leibniz algebras $(\Lien, \Lieq, \delta)$ is the ideal
$$[(\Lies, \Lieh, \delta), (\Liet, \Liej, \delta)] = \left( <D_{\Lieh}({\Liet}), D_{\Liej}({\Lies})>, [{\Lieh}, {\Liej}], \delta_{\mid} \right)$$
where $D_{\Lieh}({\Liet}) = \{ {^h}t, t^h \mid h \in {\Lieh}, t \in {\Liet} \}$ and $D_{\Liej}({\Lies}) = \{ {^j}s, s^j \mid j \in {\Liej}, s \in {\Lies} \}$.

  \item In particular, the \emph{derived crossed module} of a crossed module $(\Lien, \Lieq, \delta)$ is
$$(\Lien, \Lieq, \delta)' = [(\Lien, \Lieq, \delta), (\Lien, \Lieq, \delta)] = \left( D_{\Lieq}({\Lien}), [{\Lieq}, {\Lieq}], \delta_{\mid} \right)$$

\item Following \cite{CFCKhL}, the crossed ideal $(\Lien,\Lieq,\delta)^{\rm ann}$ is the crossed submodule $(\overline{\Lien},\Lieq_{\Lie},\bar{\delta})$, where $\overline{\Lien}$ is the ideal of $\Lien$
 generated by all elements $[n,n]$ and $[q,n]+[n,q]$, $q \in \Lieq,n \in \Lien$, and
$\Lieq^{\rm ann}$ is the ideal of $\Lieq$ generated by all elements $[q,q]$ for $q\in \Lieq$. Moreover $(\Lien,\Lieq,\delta)_{\Lie} = (\Lien,\Lieq,\delta) / (\Lien,\Lieq,\delta)^{\rm ann}$ is a crossed module of Lie algebras.

\item Following \cite{CFCGMKh}, the crossed ideal $Z(\Lien, \Lieq, \delta) = \left( {\Lien}^{\Lieq}, st_{\Lieq}({\Lien}) \cap Z({\Lieq}), \delta_{\mid} \right)$ is the \emph{center }of the crossed module $(\Lien,\Lieq,\delta)$, where $Z(\Lieq)$  is center of $\Lieq$, ${\Lien}^{\Lieq}  = \{ n \in {\Lien} \mid {^q}n = n^q =0, \text{for all}\ q \in q \}$ and $st_{\Lieq}({\Lien})=
  \{ q \in {\Lieq} \mid {^q}n = n^q =0, \text{for all}\ q \in \Lieq \}$.

  \item  An  extension of Leibniz crossed modules $(e) : 0 \to (\Liea,\Lieb,\sigma)$ $\to (\Lieh,\Liep,\sigma) \to (\Lien,\Lieq,\delta) \to 0$  is said to be central if $(\Liea,\Lieb,\sigma) \subseteq Z(\Lieh,\Liep,\sigma)$, equivalently $[(\Liea,\Lieb,\sigma),(\Lieh,\Liep,\sigma)]=0$.

    \item    A Leibniz crossed module  $(\Lien, \Lieq,\delta)$ is said to be {\bf finite dimensional} if the Leibniz algebras $\Lien$ and $\Lieq$ are both finite dimensional.

      \item A Leibniz crossed module $(\Lien, \Lieq,\delta)$ is  {\bf perfect} if it coincides with its commutator crossed submodule and it is {\bf abelian} if it coincides with its center. It is easy to show that $(\Lien, \Lieq,\delta)$ is abelian if and only if $\Lien$ and $\Lieq$ abelian Leibniz algebras and the Leibniz action of $\Lieq$ over $\Lien $ is trivial. We will denote the category of abelian crossed modules by {\bf AbXmod}. Obviously $(\Lien, \Lieq,\delta)_{\rm ab} = (\Lien, \Lieq,\delta)/[(\Lien, \Lieq,\delta),(\Lien, \Lieq,\delta)] = \left( \Lien/D_{\Lieq}(\Lien), \Lieq/[\Lieq, \Lieq],\overline{\delta} \right)$ is an abelian crossed module called the abelianization of $(\Lien, \Lieq,\delta)$.

\end{itemize}

\begin{theorem} \label{enough projectives}
$\mathbf{XLb}$ is a category with enough projective objects.
\end{theorem}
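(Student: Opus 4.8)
The plan is to exploit the algebraic nature of the category. As recalled above, $\mathbf{XLb}$ is equivalent to $\mathbf{Cat}^1$-$\mathbf{Lb}$, which is a modified category of interest and in particular a (finitary, two-sorted) variety of universal algebras; consequently $\mathbf{XLb}$ is itself such a variety, its operations being the two Leibniz brackets, the two actions ${}^{(-)}(-)$ and $(-)^{(-)}$, and the boundary $\delta$, all subject to the crossed-module axioms (i) and (ii). For any variety the forgetful functor $U$ to $\mathbf{Set}$ (sending $(\Lien,\Lieq,\delta)$ to the disjoint union of the underlying sets of $\Lien$ and $\Lieq$) admits a left adjoint $F$, the free functor, producing the free Leibniz crossed module $F(S)$ on a set $S$. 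I would first recall this free--forgetful adjunction explicitly for $\mathbf{XLb}$, noting that $F$ may be built in stages: a free Leibniz algebra $\LieF$ on the part of $S$ destined for $\Lieq$, and then the free crossed module over $\LieF$ generated by the remaining symbols.

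Second, I would establish that regular epimorphisms in $\mathbf{XLb}$ are exactly the homomorphisms $(\varphi,\psi)$ that are surjective on underlying sets, i.e. with $\varphi$ and $\psi$ both onto. Since $\mathbf{XLb}$ is semi-abelian this is standard: surjective homomorphisms of crossed modules are precisely the cokernels of their kernels. Granting this, the counit $\varepsilon_{(\Lien,\Lieq,\delta)} \colon F\bigl(U(\Lien,\Lieq,\delta)\bigr) \to (\Lien,\Lieq,\delta)$ is surjective on underlying sets by construction, hence is a regular epimorphism onto $(\Lien,\Lieq,\delta)$.

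Finally, I would show that each free object $F(S)$ is projective, which is purely formal from the adjunction: given a regular epimorphism $(p,q)\colon (\Liem,\Lief,\mu)\to(\Lien',\Lieq',\delta')$ and a morphism $f\colon F(S)\to(\Lien',\Lieq',\delta')$, the bijection $\Hom_{\mathbf{XLb}}(F(S),-)\cong\Hom_{\mathbf{Set}}(S,U(-))$ turns $f$ into a set map $S\to U(\Lien',\Lieq',\delta')$; because $U(p,q)$ is surjective, this set map lifts through $U(p,q)$ by the axiom of choice, and transporting the lift back across the adjunction yields a homomorphism $g\colon F(S)\to(\Liem,\Lief,\mu)$ with $(p,q)\circ g=f$. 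Hence $F(S)$ is projective, and in particular $F\bigl(U(\Lien,\Lieq,\delta)\bigr)$ is a projective object admitting a regular epimorphism onto the arbitrary crossed module $(\Lien,\Lieq,\delta)$; this is exactly the assertion that $\mathbf{XLb}$ has enough projectives, and it furnishes the projective presentations $0\to(\Lieu,\Lier,\mu)\to(\Liem,\Lief,\mu)\to(\Lien,\Lieq,\delta)\to 0$ used in the sequel. The only genuine work lies in pinning down the free functor $F$ for this two-sorted theory and in the (standard but category-specific) identification of regular epimorphisms with componentwise surjections; once these are in place the projectivity argument is automatic.
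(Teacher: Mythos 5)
Your overall strategy is the same as the paper's: set up a free--forgetful adjunction, observe that objects in the image of the left adjoint are projective with respect to regular epimorphisms (your lifting argument via the adjunction and the axiom of choice is the standard, correct one), and conclude by noting that the counit is a regular epimorphism onto an arbitrary crossed module. The paper realises this by factoring the adjunction through $\mathbf{Lb}$: the right adjoint ${\bf U}_1$ sends $(\Lien,\Lieq,\delta)$ to the \emph{product} $\Lien\times\Lieq$, its left adjoint ${\bf F}_1$ sends $\Lieh$ to $(\bar{\Lieh},\Lieh\ast\Lieh,inc)$ built from the coproduct of Leibniz algebras, and this is composed with the usual free Leibniz algebra adjunction ${\bf F}_2\dashv {\bf U}_2$ with $\mathbf{Set}$.

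There is, however, a genuine gap in your first step: the functor $U\colon\mathbf{XLb}\to\mathbf{Set}$ sending $(\Lien,\Lieq,\delta)$ to the \emph{disjoint union} of the underlying sets of $\Lien$ and $\Lieq$ has no left adjoint, so the adjunction on which your entire proof rests does not exist. A quick counterexample: if $F\dashv U$ existed, then for the zero crossed module $0=(0,0,0)$ and $S$ a singleton one would have $\Hom_{\mathbf{Set}}(S,U(0))\cong\Hom_{\mathbf{XLb}}(F(S),0)$; the right-hand side is a singleton, but $U(0)=\{0\}\amalg\{0\}$ has two elements. The underlying issue is that $\mathbf{XLb}$ is a \emph{two-sorted} variety, and free objects exist only on \emph{sorted} sets: the forgetful functor admitting a left adjoint is the one to $\mathbf{Set}\times\mathbf{Set}$, namely $(\Lien,\Lieq,\delta)\mapsto(|\Lien|,|\Lieq|)$, since an element of an unsorted set cannot be assigned a sort functorially. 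Your argument is repaired either by replacing $\mathbf{Set}$ with $\mathbf{Set}\times\mathbf{Set}$ throughout (the counit is then componentwise surjective and your lifting works componentwise), or by taking the underlying set to be the \emph{product} $|\Lien|\times|\Lieq|$, which is a composite of right adjoints and is exactly what the paper's ${\bf U}_2\circ{\bf U}_1$ does; indeed $\Hom_{\mathbf{Set}}(X,|\Lien|\times|\Lieq|)\cong\Hom_{\mathbf{Set}\times\mathbf{Set}}((X,X),(|\Lien|,|\Lieq|))$, so the paper's free object on $X$ is precisely the free two-sorted object on the pair $(X,X)$. With that correction, the remaining steps of your proof (regular epimorphisms being the componentwise surjections, projectivity of free objects, the counit furnishing the presentation) are sound and coincide with the paper's.
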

\begin{proof}
There is a faithful functor ${\bf U}_1 : {\bf XLb} \to {\bf Lb}$, which assigns to a  Leibniz crossed module $(\Lien, \Lieq, \delta)$ the direct product of Leibniz algebras
$\Lien \times \Lieq$. Now we define the functor ${\bf F}_1 : {\bf Lb} \to {\bf XLb}$ that assigns to any Leibniz algebra {\Lieh} the inclusion crossed module $(\bar{\Lieh},
{\Lieh} \ast {\Lieh}, inc)$, where $\ast$ is the coproduct of Leibniz algebras, with the natural inclusions  $i_1, i_2 : {\Lieh} \to {\Lieh} \ast {\Lieh}$, and $\bar{\Lieh}$ is
the kernel of the retraction $p_2 : {\Lieh} \ast {\Lieh} \to {\Lieh}$ determined by the conditions $p_2 \circ i_1 = 0$ and $p_2 \circ i_2 = \Id_{\Lieh}$. A direct adaptation of the proof of \cite[Proposition 2.1.1]{CIL} to Leibniz algebras case shows that ${\bf F_1}$ is left adjoint to ${\bf U_1}$.

Now consider the forgetful functor ${\bf U_2} : {\bf Lb} \to {\bf Set}$ that assigns to a Leibniz algebra {\Lieq} its underlying set. It is well-know (see for instance \cite{DGMKh}) that ${\bf U_2}$ has as left adjoint the free Leibniz algebra functor ${\bf F_2} : {\bf Set} \to {\bf Lb}$.

Hence the composition $\left( {\bf F}, {\bf U} \right) = \left( {\bf F}_1 \circ {\bf F_2}, {\bf U_2} \circ {\bf U_1} \right)$ is an adjoint pair, so the free crossed module of Leibniz algebras
${\bf F}(X)$, for $X \in {\bf Set}$, is a projective object with respect to regular epimorphisms in {\bf XLb} and  any crossed module of Leibniz algebras $(\Lien, \Lieq, \delta)$ admits o projective
presentation by means of the counit of the adjunction ${\bf F} \circ {\bf U} (\Lien, \Lieq,\delta) \twoheadrightarrow (\Lien, \Lieq, \delta)$.
\end{proof}

The following lemma is useful in our investigation.

\begin{lemma}\label{projective}\
\begin{enumerate}
\item[(i)]  An abelian crossed module $(A,B,\mu)$  is projective in the category {\bf AbXmod} if and only if $\mu$ is injective.
\item[(ii)]  Let $(\mathfrak{m},\Lief,\mu)$ be a projective Leibniz crossed module, then:
\begin{enumerate}
\item[(a)] any crossed submodule of $(\mathfrak{m},\Lief,\mu)_{\rm ab}$ is projective in  {\bf AbXmod}.
\item[(b)] the homomorphism $\mu$ is injective, $\Lief$ and $\Lief/ \mu(\Liem)$ are projective Leibniz algebras  and $HL_i(\Lief/\mu(\Liem))=0, i \geq 2$.
\end{enumerate}
 \end{enumerate}
\end{lemma}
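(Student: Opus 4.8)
The plan is to dispatch part (i) by a direct lifting argument and then bootstrap part (ii) from the explicit description of free crossed modules supplied by Theorem \ref{enough projectives}, together with part (i).

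For part (i), the first observation is that an abelian crossed module is nothing but a $\mathbb{K}$-linear map $\mu\colon A\to B$ between vector spaces (all brackets and the action being trivial), so $\mathbf{AbXmod}$ is the arrow category of $\Vect_{\mathbb{K}}$, it is abelian, and a morphism is a regular epimorphism precisely when both components are surjective. For the implication ``$\mu$ injective $\Rightarrow$ projective'' I would verify the lifting property by hand: given a surjection $(\varphi,\psi)\colon(C,D,\nu)\twoheadrightarrow(C',D',\nu')$ and a map $(f,g)\colon(A,B,\mu)\to(C',D',\nu')$, split $B=\mu(A)\oplus B'$, lift $f$ along $\varphi$ to $\tilde f$ (possible since $A$ is a projective vector space), put $\tilde g(\mu(a)):=\nu(\tilde f(a))$ on $\mu(A)$ and lift $g|_{B'}$ along $\psi$ on the complement; the identity $g\mu=\nu'f$ then forces $\psi\tilde g=g$ and the commutativity $\tilde g\mu=\nu\tilde f$. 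For the converse I would use a single test surjection: set $\nu\colon A\to A\oplus B$, $\nu(a)=(a,\mu(a))$, and $(\Id_A,\mathrm{pr}_B)\colon(A,A\oplus B,\nu)\twoheadrightarrow(A,B,\mu)$. Since $(A,B,\mu)$ is projective its identity lifts, and any lift is of the form $(\Id_A,\tilde g)$ with $\tilde g=(\tilde g_1,\Id_B)$ and $\tilde g_1\mu=\Id_A$, i.e.\ a left inverse of $\mu$; hence $\mu$ is injective.

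For part (ii) the key preliminary is that every projective crossed module is a retract of a free one, and by the construction in Theorem \ref{enough projectives} the free crossed module has the shape $(\bar{\Lieh},\Lieh\ast\Lieh,inc)$ with $\Lieh$ a free Leibniz algebra, $inc$ the inclusion of the ideal $\bar{\Lieh}=\Ker(p_2)$, and $\Lieh\ast\Lieh$ again free (a coproduct of free Leibniz algebras). I would therefore fix a section $(s_1,s_2)$ and retraction $(r_1,r_2)$ exhibiting $(\Liem,\Lief,\mu)$ as a retract of such a free crossed module. Then (b) follows formally: injectivity of $\mu$ is immediate, since $\mu(x)=0$ gives $s_1(x)=s_2\mu(x)=0$ and hence $x=r_1s_1(x)=0$ (using that $inc$ is injective); $\Lief$ is a retract of the free algebra $\Lieh\ast\Lieh$, hence projective; the retraction is compatible with $\mu(\Liem)\subseteq\Lief$ and $\bar{\Lieh}\subseteq\Lieh\ast\Lieh$, so it descends to present $\Lief/\mu(\Liem)$ as a retract of $(\Lieh\ast\Lieh)/\bar{\Lieh}\cong\Lieh$, which is free; and finally, as $HL_i$ is functorial and vanishes in degrees $i\ge 2$ on free Leibniz algebras, the direct summand $HL_i(\Lief/\mu(\Liem))$ of $HL_i(\Lieh)=0$ also vanishes for $i\ge 2$.

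For (a) I would argue conceptually: the abelianization $(-)_{\mathrm{ab}}$ is left adjoint to the inclusion $\mathbf{AbXmod}\hookrightarrow\mathbf{XLb}$, and this inclusion preserves regular epimorphisms (which are the componentwise surjections in both categories), so $(-)_{\mathrm{ab}}$ preserves projectives. Hence $(\Liem,\Lief,\mu)_{\mathrm{ab}}$ is projective in $\mathbf{AbXmod}$, and by part (i) its structure map $\bar\mu$ is injective; restricting $\bar\mu$ to any crossed submodule keeps it injective, so a final appeal to part (i) shows that every crossed submodule of $(\Liem,\Lief,\mu)_{\mathrm{ab}}$ is projective. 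I expect the main obstacle to be the careful verification of the structural facts about free crossed modules --- that $inc$ is injective, that $\Lieh\ast\Lieh$ is free with $(\Lieh\ast\Lieh)/\bar{\Lieh}\cong\Lieh$, and that $HL_i$ vanishes on free Leibniz algebras for $i\ge 2$ --- since the remaining arguments are formal once these are secured; the preservation of projectives under abelianization in (a) is the only genuinely categorical point, and it is handled by the adjunction above.
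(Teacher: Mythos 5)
Your proof is correct, but at two points it takes a genuinely different route from the paper's. In part (i), your ``injective $\Rightarrow$ projective'' argument (split $B=\mu(A)\oplus B'$, lift on $A$, transport along $\mu$, lift freely on the complement) is essentially the paper's basis-extension argument with $X\subseteq Y$; for the converse, however, the paper splits the free presentation ${\bf FU}(A,B,\mu)\twoheadrightarrow (A,B,\mu)$ coming from Theorem \ref{enough projectives}, realizing $(A,B,\mu)$ as a retract of $(\overline{F(X)},F(X)\ast F(X),inc)$, while you test projectivity against the explicit surjection $(\Id_A,\mathrm{pr}_B)\colon (A,A\oplus B,\nu)\twoheadrightarrow(A,B,\mu)$, $\nu(a)=(a,\mu(a))$, and read off a left inverse of $\mu$. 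Your variant stays entirely inside {\bf AbXmod}, which is a real advantage: the paper's test epimorphism has non-abelian source, so projectivity in {\bf AbXmod} does not literally apply to it without first abelianizing, a step the paper leaves implicit. In part (ii)(b) the divergence is larger: the paper proves projectivity of $\Lief$ and $\Lief/\mu(\Liem)$ by lifting against test crossed modules of the form $(0,\Lieq_1,i)\twoheadrightarrow (0,\Lieq_2,i)$, and obtains $HL_i(\Lief/\mu(\Liem))=0$ from the Tor interpretation of Leibniz homology in \cite{LP} together with projectivity of the Lie algebra $\left(\Lief/\mu(\Liem)\right)_{\Lie}$, whereas you exploit once and for all that $(\Liem,\Lief,\mu)$ is a retract of the free crossed module $(\bar{\Lieh},\Lieh\ast\Lieh,inc)$: then $\Lief$ is a retract of $\Lieh\ast\Lieh$, $\Lief/\mu(\Liem)$ is a retract of $(\Lieh\ast\Lieh)/\bar{\Lieh}\cong\Lieh$, and the homology vanishing follows from functoriality of $HL_i$ applied to retracts. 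This is more unified and makes the injectivity of $\mu$ in (ii)(b) transparent (the paper just says ``similar to (i)''), but it charges two structural facts to the account, which you rightly flag and which do hold: $\Lieh\ast\Lieh$ is again free (the free Leibniz algebra functor is a left adjoint, hence preserves coproducts, so $F(X)\ast F(X)\cong F(X\sqcup X)$), and $HL_i$, $i\geq 2$, vanishes on free Leibniz algebras, which is the same input from \cite{LP} that the paper invokes in Tor form, so neither route is more elementary in its external citations. Part (ii)(a) coincides with the paper's argument: abelianization preserves projectives because it is left adjoint to an epi-preserving inclusion, and then part (i) is applied twice.
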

\begin{proof}
{\it (i)} We can consider $(A,B,\mu)$ as a Leibniz crossed module. According to Theorem \ref{enough projectives} $(A,B,\mu)$ admits a projective presentation by means of the counit of the adjunction
${\bf FU}(A,B,\mu) \overset{(\pi_1, \pi_2)}\twoheadrightarrow (A,B,\mu)$. If $(A,B,\mu)$ is projective, then the morphism $(\pi_1, \pi_2)$ is split. Thus $(A,B,\mu)$ is isomorphic to a crossed submodule of ${\bf FU}(A,B,\mu) = (\overline{F(X)}, F(X) \ast F(X), inc)$, so $\mu$ is injective.

Conversely, let $(A,B,\mu)$ be aspherical (that is $\mu$ is injective), and $X$, $Y$ be the basis of $A$ and $B$, respectively.
We may assume that $X \subseteq Y$. Let $(\delta_1, \delta_2):(A,B,\mu)\longrightarrow (T_2,L_2,\sigma_2)$ and $(\varepsilon_1, \varepsilon_2) :(T_1,L_1,\sigma_1) \twoheadrightarrow (T_2,L_2,\sigma_2)$ be homomorphisms of crossed modules in {\bf AbXmod}, such that $(\varepsilon_1, \varepsilon_2)$ is surjective. There is a homomorphism $\theta_1:A\longrightarrow T_1$ such that $\varepsilon_1 \circ \theta_1=\delta_1$. We define the map $h:Y \longrightarrow L_1$, as $h(x)=\sigma_1 \circ \theta_1(x)$ if $x \in X$, otherwise $h(x)=l_x$, where $l_x$ is a preimage of $\delta_2(x)$ via $\varepsilon_2$, that is $\varepsilon_2(l_x) = \delta_2(x)$.
Then $h$ extends to a homomorphism $\theta_2 : B \longrightarrow L_2$. It is readily verified that $(\theta_1,\theta_2):(A,B,\mu)\longrightarrow (T_1,L_1,\sigma)$ is a homomorphism of crossed modules.

{\it (ii)}  {\it (a)} The abelianization functor ${\bf Ab} : {\bf XLb}\longrightarrow {\bf AbXmod},$ ${\bf Ab}(\Lien,\Lieq,\delta)= (\Lien,\Lieq,\delta)_{\rm ab}$ is left adjoint of the inclusion functor $inc : {\bf AbXmod} \longrightarrow {\bf XLb}$.  Since  the inclusion functor  preserves epimorphisms, then $\bf Ab$   preserves projective objects. Now, if $(\mathfrak{m},\Lief,\mu)$ is a projective Leibniz crossed module, then $(\mathfrak{m},\Lief,\mu)_{\rm ab}$ is also projective in the category  ${\bf AbXmod}$ and the result follows.

{\it (b)} The homomorphism $\mu$ is injective by applying an argument similar to the proof of statement {\it (i)}.
  Now,  assume  there exists the homomorphisms of Leibniz algebras $g:\Lieq\twoheadrightarrow \Lieq_2$ and $h:\Lief/\mu(\mathfrak{m})\longrightarrow \Lieq_2$. So, we have the induced morphisms of crossed modules
  $(0,g):(0,\Lieq_1,i)\twoheadrightarrow (0,\Lieq_2,i)$ and $(0,h \circ \pi):(\mathfrak{m},\Lief,\mu) \longrightarrow (0,\Lieq_2,i)$. By the assumptions, there is a morphism of crossed modules $(\beta_1,\beta_2):(\mathfrak{m}
  ,\Lief,\mu)\longrightarrow (0,\Lieq_1,i)$, such that $g \circ \beta_2 =h \circ \pi$. Since $\beta_2(\mu(\mathfrak{m}))=0$, $\bar{\beta}:\Lief/\mu(\mathfrak{m})\longrightarrow \Lieq_1$ is induced by $\beta_2$.
  It is easy to check that $g \circ \overline{\beta}=h$, so $\Lief/\mu(\mathfrak{m})$ is projective.  Similarly, $\Lief$ is projective.
  Finally, since $HL_{\ast}(\Lief/\mu(\mathfrak{m}))= TOR_{\ast}^{UL(\Lief/\mu(\mathfrak{m}))} \left(  U \left( (\Lief/\mu(\mathfrak{m}) \right)_{\Lie}) \right)$ \cite[Theorem 3.4]{LP}, and $\left( \Lief/\mu(\mathfrak{m})\right)_{\Lie}$ is a projective Lie algebra, hence $U\left ( \left( \Lief/\mu(\mathfrak{m} ) \right)_{\Lie} \right)$ is projective, then the result follows for any $i \geq 2$.
\end{proof}

\begin{remark}
 Let $\Lieq$ be a projective Leibniz algebra. Then Lemma \ref{projective} {\it (ii)} applied to the Leibniz crossed module $(0, \Lieq, i)$, implies that $HL_i(\Lieq)=0$ for $i\geq 2$.
\end{remark}


\section{Schur multiplier of Leibniz crossed modules} \label{multiplier}

Due to Theorem \ref{enough projectives}, any Leibniz crossed module has a projective presentation $0 \longrightarrow (\Lieu, \Lier, \mu) \longrightarrow (\Liem, \Lief, \mu) \overset{(\pi_1,\pi_2)} \longrightarrow (\Lien, \Lieq, \delta) \longrightarrow 0$ and following \cite[Theorem 6.9 and Corollary 6.10]{EVDL2} the quotient
$$\frac{(\Lieu, \Lier, \mu) \cap [(\Liem, \Lief, \mu),(\Liem, \Lief, \mu)]}{[(\Lieu, \Lier, \mu), (\Liem, \Lief, \mu)]}$$
is a Baer invariant, which means it doesn't depend on the chosen free presentation. By analogy with other algebraic theories, we call this term Schur multiplier of the Leibniz crossed module $(\Lien, \Lieq, \delta)$ and we denote it by ${\cal M}(\Lien, \Lieq, \delta)$. Moreover,  associated to a short exact sequence of Leibniz crossed modules  $(e) : 0 \longrightarrow (\Liea, \Lieb, \sigma)  \overset{(i_1,i_2)}\longrightarrow (\Lieh, \Liep, \sigma) \overset{(f_1,f_2)}\longrightarrow(\Lien, \Lieq, \delta) \longrightarrow 0$ there exists the
following five-term exact sequence:
\begin{equation} \label{five term}
 {\cal M}(\Lieh, \Liep, \sigma)  \to  {\cal M}(\Lien, \Lieq, \delta) \stackrel{\theta_{\ast}(e)} \to  \frac{(\Liea, \Lieb, \sigma)}{[(\Liea, \Lieb, \sigma), (\Lieh, \Liep, \sigma)]}
 \to  (\Lieh, \Liep, \sigma)_{\rm ab} \to (\Lien, \Lieq, \mu)_{\rm ab} \to 0.
\end{equation}


\subsection{Non-abelian tensor  and exterior product}

\begin{definition} \label{non abelian} \cite{Gnedbaye}
Let $\Liem$ and $\Lien$ be Leibniz algebras with mutual Leibniz actions on each other. The non-abelian
tensor product of $\Liem$ and $\Lien$, denoted by $\Liem \star \Lien$, is the Leibniz algebra
generated by the symbols $m \ast n$ and $n \ast m$, for all $m \in \Liem$ and $n \in \Lien$, subject to the
following relations:
\[  \footnotesize{ \begin{array}{ll}
(1a) ~ k(m\ast n)=km\ast n= m\ast kn, &(1b)~ k(n\ast m)=kn\ast m=n\ast km,\\
(2a) ~ (m+m')\ast n=m\ast n + m' \ast n, &(2b)~ (n+n')\ast m=n\ast m+n'\ast m,\\
(2c)~ m\ast(n+n')=m\ast n+m\ast n', &(2d) ~ n\ast (m+m')=n\ast m +n\ast m',\\
(3a)~ m\ast [n,n']=m^n\ast n' -m^{n'}\ast n,&(3b)~ n\ast[m,m']=n^m\ast m'-n^{m'}\ast m\\
(3c)~ [m,m']\ast n=^mn\ast m'-m\ast n^{m'},&(3d) ~ [n,n']\ast m={^n}m\ast n' - n\ast m^{n'},\\
(4a)~ m\ast ^{m'}n=-m\ast n^{m'},\qquad &(4b)~  n\ast ^{n'}m=-n\ast m^{n'},\\
(5a) ~ m^n\ast ^{m'}n'=[m\ast n,m'\ast n']=^mn\ast m'{^{n'}},&(5b) ~ {^n}m\ast {n'^{m'}}=[n\ast m,n'\ast m']=n^m \ast {^{n'}{m'}},\\
(5c) ~  m^n\ast {n'}{^{m'}}=[m\ast n,n'\ast m']=^mn\ast ^{n'}m', &(5d)~ {^n}m\ast ^{m'}n'=[n\ast m,m'\ast n']=n^m\ast m'^{n'},
\end{array}} \]
for all $k \in \mathbb{K},m,m' \in \Liem$ and $n,n' \in \Lien$.
\end{definition}

Let us consider two Leibniz crossed modules $\eta:\Liem \longrightarrow \Lieq$ and $\delta: \Lien \longrightarrow \Lieq$. Then there
are induced Leibniz actions of $\Liem$ and $\Lien$ on each other via the action of $\Lieq$. Therefore, we can
consider the non-abelian tensor product $\Liem\star \Lien$. In \cite{DGMKh} is defined $\Liem\square \Lien$ as the vector subspace
of $\Liem  \star\Lien$ generated by the elements $m \ast n'- n \ast m'$ such that $\eta(m) = \delta(n)$ and
$\eta(m' ) = \delta(n')$. The vector subspace $\Liem\square\Lien$ is contained in the center of $\Liem \star \Lien$, so in
particular it is an ideal of $\Liem \star \Lien$  \cite[Proposition 1]{DGMKh}.

\begin{definition} \cite{DGMKh} The non-abelian exterior product $\Liem \curlywedge \Lien$ of $\Liem$ and $\Lien$ is the quotient
 $$\Liem \curlywedge \Lien=\frac{\Liem\star \Lien}{\Liem\square \Lien}.$$
The cosets of $m \ast n$ and $n \ast m $ will be denoted by $m \curlywedge n$ and $n \curlywedge m$, respectively.
\end{definition}

Given a crossed module $(\Lien,\Lieq,\delta)$, by the Leibniz action of $\Lieq$ on $\Lien$ and the Leibniz action of $\Lien$ on $\Lieq$, given by $\delta$,
we can form the non-abelian  tensor product of $\Lieq \star\Lien$ and $\Lieq \star \Lieq$.
As explained in \cite[Proposition 4.3]{Gnedbaye}, the homomorphisms $\lambda_{\Lieq}:\Lieq \star\Lien\rightarrow \Lieq $, $\lambda_{\Lieq}(q\ast n)={^q}n, \lambda_{\Lieq}(n\ast q)={n}^q$ and $\mu_{\Lieq}:\Lieq \star \Lieq\rightarrow \Lieq$, $\mu_{\Lieq}(q\ast q')=[q,q']$ are Leibniz crossed modules,
where the Leibniz action of $\Lieq$ on $\Lieq\star \Lien$ is given by:
\[
\begin{array}{lcl}
^q(q'\ast n')=[q,q']\ast n'-{^q}{n'}\ast q',& \quad & ^q(n'\ast q')={^q}{n'}\ast q' -[q,q']\ast n', \\
(q'\ast n')^q=[q',q]\ast n'+q'\ast n'^{q}, & \quad & (n'\ast q')^q=n'^q\ast q'+ n' \ast [q',q].
\end{array}
\]
The  Leibniz  action of $\Lieq $ on $\Lieq\star \Lieq$ are defined similarly. It is apparent that $\lambda_{\Lieq}(\Lieq\square \Lien)=0$ and $\mu_{\Lieq}(\Lieq \square \Lieq)=0$, so the induced homomorphisms $\overline\lambda_{\Lieq}:\Lieq \curlywedge \Lien \rightarrow \Lieq$ and $\overline\mu_{\Lieq}:\Lieq \curlywedge \Lieq \rightarrow \Lieq$ are  Leibniz  crossed modules.

\begin{remark}
Given homomorphisms of Leibniz algebras $\varphi_1 : \Liem \longrightarrow \Lien$ and $\varphi_2 : \Liep \longrightarrow \Lieq$ such that $\Liem$ and $\Liep$, respectively $\Lien$ and $\Lieq$, have mutual Leibniz actions on each other, then there is induced a homomorphism $\varphi_1  \curlywedge \varphi_2 : \Liem \curlywedge \Liep \longrightarrow \Lien \curlywedge \Lieq$ defined by $\varphi_1  \curlywedge \varphi_2 (m \curlywedge p) = \varphi_1(m) \curlywedge \varphi_2(p), \varphi_1  \curlywedge \varphi_2(p \curlywedge m) = \varphi_2(p) \curlywedge \varphi_1(m)$.
\end{remark}

\begin{proposition} \label{central}
	Let $(\Lien,\Lieq,\delta)$ be a  Leibniz crossed module. Then,
\begin{enumerate}
	\item[(i)] There is a Leibniz action of $\Lieq \curlywedge \Lieq$ on $\Lieq \curlywedge \Lien$ defined by
	$^xy={^{\overline{\mu_ {\Lieq}}(x)}}y$ and $y^x=y^{\overline{\mu_ {\Lieq}}(x)}$, for all
	$x\in \Lieq\curlywedge \Lieq$ and $y\in \Lieq \curlywedge \Lien$.
	\item[(ii)] The map $\Id \curlywedge \delta:\Lieq \curlywedge \Lien \longrightarrow \Lieq \curlywedge \Lieq$
	with the  Leibniz action defined in statement {\it (i)} is a Leibniz crossed module.
	\item[(iii)]    There is a homomorphism $\phi = (\bar{\lambda}_{\Lien}, \bar{\mu}_{\Lieq}) : (\Lieq \curlywedge \Lien , \Lieq \curlywedge \Lieq , \Id \curlywedge \delta) \longrightarrow  (\Lien,\Lieq,\delta)$ such that $\Ker(\phi) \subseteq Z(\Lieq \curlywedge \Lien , \Lieq \curlywedge \Lieq , \Id \curlywedge \delta) $.
\end{enumerate}
\end{proposition}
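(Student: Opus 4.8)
The plan is to establish (i) as the restriction of an already known action, to read (ii) and the morphism part of (iii) off the Peiffer identities, and to reduce the centrality in (iii) to two closed formulas for the action, whose derivation is the real crux. Throughout I write $\overline{\lambda}_{\Lieq}=\delta\circ\bar\lambda_{\Lien}:\Lieq\curlywedge\Lien\to\Lieq$ and $\bar\mu_{\Lieq}:\Lieq\curlywedge\Lieq\to\Lieq$, both already recorded as Leibniz crossed modules before the statement. For (i), the prescription ${}^x y={}^{\bar\mu_{\Lieq}(x)}y$, $y^x=y^{\bar\mu_{\Lieq}(x)}$ is exactly the restriction along the Leibniz homomorphism $\bar\mu_{\Lieq}$ of the $\Lieq$-action on $\Lieq\curlywedge\Lien$ described above. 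Hence each of the six action axioms for $\Lieq\curlywedge\Lieq$ reduces to the corresponding axiom for the $\Lieq$-action: the three axioms involving a bracket of the acting algebra, such as ${}^{[x,x']}y={}^x({}^{x'}y)+({}^x y)^{x'}$, use that $\bar\mu_{\Lieq}([x,x'])=[\bar\mu_{\Lieq}(x),\bar\mu_{\Lieq}(x')]$, while the remaining mixed axioms are immediate. So (i) is routine.

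For (ii) I would check the two crossed module axioms. The compatibility $\bar\mu_{\Lieq}\circ(\Id\curlywedge\delta)=\delta\circ\bar\lambda_{\Lien}=\overline{\lambda}_{\Lieq}$ is verified on generators ($q\curlywedge n\mapsto[q,\delta(n)]$ on both sides). Rewriting the target bracket by the Peiffer identity $[x,z]={}^{\bar\mu_{\Lieq}(x)}z$ of the crossed module $\bar\mu_{\Lieq}$ turns the first axiom $(\Id\curlywedge\delta)({}^x y)=[x,(\Id\curlywedge\delta)(y)]$ into the $\Lieq$-equivariance $(\Id\curlywedge\delta)({}^q y)={}^q\big((\Id\curlywedge\delta)(y)\big)$, which I check on the generators $q'\curlywedge n'$ and $n'\curlywedge q'$. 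The second (Peiffer) axiom ${}^{(\Id\curlywedge\delta)(y_1)}y_2=[y_1,y_2]$ becomes, via $\bar\mu_{\Lieq}\circ(\Id\curlywedge\delta)=\overline{\lambda}_{\Lieq}$, the identity ${}^{\overline{\lambda}_{\Lieq}(y_1)}y_2=[y_1,y_2]$, which is precisely the Peiffer identity of the crossed module $\overline{\lambda}_{\Lieq}$.

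For (iii), both the equivariance required of $\phi=(\bar\lambda_{\Lien},\bar\mu_{\Lieq})$ (the commuting square being the generator check above) and the whole inclusion $\Ker(\phi)\subseteq Z$ hinge on the two closed formulas
\[
{}^x y=\bar\mu_{\Lieq}(x)\curlywedge\bar\lambda_{\Lien}(y),\qquad y^x=\bar\lambda_{\Lien}(y)\curlywedge\bar\mu_{\Lieq}(x)\qquad(x\in\Lieq\curlywedge\Lieq,\ y\in\Lieq\curlywedge\Lien).
\]
Granting these, $\bar\lambda_{\Lien}$ is equivariant, since $\bar\lambda_{\Lien}({}^x y)={}^{\bar\mu_{\Lieq}(x)}\bar\lambda_{\Lien}(y)$ and likewise on the right. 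For the centrality I would use that $[\Ker(\phi),(\Lieq\curlywedge\Lien,\Lieq\curlywedge\Lieq,\Id\curlywedge\delta)]=0$ splits into three checks: $\Ker\bar\mu_{\Lieq}$ acts trivially on $\Lieq\curlywedge\Lien$ (the action factors through $\bar\mu_{\Lieq}$), $\Ker\bar\mu_{\Lieq}$ is central in $\Lieq\curlywedge\Lieq$ (immediate from the Peiffer identity of $\bar\mu_{\Lieq}$), and $\Lieq\curlywedge\Lieq$ annihilates $\Ker\bar\lambda_{\Lien}$. The last is instantaneous from the formulas: for $s\in\Ker\bar\lambda_{\Lien}$ one gets ${}^x s=\bar\mu_{\Lieq}(x)\curlywedge 0=0$ and $s^x=0\curlywedge\bar\mu_{\Lieq}(x)=0$.

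The main obstacle is proving these two closed formulas, and here the non-abelian tensor relations are indispensable. On a generator $q\curlywedge n$ I would expand ${}^{[a,b]}(q\curlywedge n)=[[a,b],q]\curlywedge n-{}^{[a,b]}n\curlywedge q$ from the explicit action, collapse $[[a,b],q]\curlywedge n$ by relation (3c) into ${}^{[a,b]}n\curlywedge q-[a,b]\curlywedge n^{q}$, cancel the common term, and then turn $-[a,b]\curlywedge n^{q}$ into $[a,b]\curlywedge{}^{q}n$ by relation (4a); since ${}^{q}n=\bar\lambda_{\Lien}(q\curlywedge n)$ this is the first formula on this generator, and $n'\curlywedge q'$ is handled identically. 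For the right action the same scheme, now using relations (3c) and (3b), gives $(q\curlywedge n)^{[a,b]}={}^{q}n\curlywedge[a,b]=\bar\lambda_{\Lien}(q\curlywedge n)\curlywedge\bar\mu_{\Lieq}(a\curlywedge b)$ directly. Extending by bilinearity over $\Lieq\curlywedge\Lieq$ yields the formulas in general. The delicate points are the bookkeeping of the left versus right actions of $\Lien$ on $\Lieq$ inside the tensor product and the signs in relations (3b)--(4a); once these are pinned down, statements (i)--(iii) all fall out.
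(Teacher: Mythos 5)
Your proposal is correct and follows essentially the same route as the paper: the action in (i) is obtained by restriction along $\bar{\mu}_{\Lieq}$, the crossed-module axioms in (ii) are verified on generators, and both the equivariance and the centrality in (iii) rest on exactly the paper's key manipulation with relations (3c) and (4a), which appears in the paper as the computation ${}^{q_1\curlywedge q_2}(q\curlywedge n)=[q_1,q_2]\curlywedge {}^{q}n$. The only cosmetic differences are that you package this computation as the general closed formulas ${}^{x}y=\bar{\mu}_{\Lieq}(x)\curlywedge\bar{\lambda}_{\Lien}(y)$ and $y^{x}=\bar{\lambda}_{\Lien}(y)\curlywedge\bar{\mu}_{\Lieq}(x)$ before specializing to kernel elements, and that for the Peiffer axiom in (ii) you invoke the previously stated crossed-module structure of $\overline{\lambda}_{\Lieq}$ where the paper recomputes that identity directly on generators.
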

\begin{proof}
	For statement {\it (i)}, thanks to the  Leibniz action of $\Lieq$ on $\Lieq \curlywedge \Lien$, everything can be
	easily checked.

For statement {\it (ii)}, it immediately follows, by using  relations (5a)--(5d) in Definition \ref{non abelian}, that
	$\Id \curlywedge \delta $ is a homomorphism of Leibniz algebras. Also, by using defining conditions of  Leibniz crossed module and  Leibniz action of $\Lieq$ on $\Lieq \curlywedge \Lien$, it is readily checked that $(\Id  \curlywedge \delta)(^xy)=[x,\Id  \curlywedge \delta(y)]$
	and $(\Id  \curlywedge \delta)(y^x)=[\Id  \curlywedge \delta(y),x]$ for all $x \in \Lieq \curlywedge \Lieq$, $y \in \Lieq \curlywedge \Lien$.
	
Now we indicate that $^{\Id  \curlywedge \delta(y_1)}y_2=[y_1,y_2]=y_1^{\Id  \curlywedge \delta(y_2)}$  for all $y_1,y_2 \in  \Lieq \curlywedge \Lien$.
	Let $y_i=q_i \curlywedge n_i$, for $i=1,2$, then we have
	\begin{alignat*}{1}
	^{q_1\curlywedge \delta(n_1)}(q_2\curlywedge n_2) & = ~ ^{[q_1,\delta(n_1)]}(q_2 \curlywedge n_2)\\
& =[[q_1,\delta(n_1)],q_2]\curlywedge n_2 -^{[q_1,\delta(n_1)]}n_2 \curlywedge q_2\\
	&=-[q_1,\delta(n_1)]\curlywedge n_2^{q_2} \\
	&=-q_1^{n_1}\curlywedge n_2^{q_2} \\
& =[q_1\curlywedge n_1,q_2\curlywedge n_2]\\
	& = {^{q_1}}{n_1} \curlywedge q_2^{n_2}\\
	&=[q_1,[q_2,\delta(n_2)]]\curlywedge n_1+q_1\curlywedge n_1^{[q_2,\delta(n_2)]}\\
	&=(q_1 \curlywedge n_1)^{q_2 \curlywedge\delta(n_2)}.
	\end{alignat*}
	For the other generators, it  can proved  in a similar way, so we obtain the result.

 For statement  {\it(iii)}, it is easy to check that $\phi$ is a crossed module homomorphism. To show that $\Ker(\phi) \subseteq Z(\Lieq \curlywedge \Lien ,\Lieq \curlywedge \Lieq, \Id \curlywedge \delta)$, let $x\in \Ker(\bar\lambda_n)$. We may assume that $x=q\curlywedge n$. So for any $q_1\curlywedge q_2\in \Lieq \curlywedge \Lieq$, we have:
	$$^{q_1\curlywedge q_2}{(q\curlywedge n)}=^{[q_1,q_2]}{(q\curlywedge n)}=[[q_1,q_2],q]\curlywedge n - ^{[q_1,q_2]}n\curlywedge q=[q_1,q_2]\curlywedge ^qn=0,$$
	by  relations (3c) and (4a) in Definition \ref{non abelian}.  With other generators can be proved  similarly, so we conclude that
	$\Ker(\bar\lambda_n) \subseteq {\Lieq \curlywedge \Lien}^{\Lieq \curlywedge \Lieq}$.

 Also, if $q_1\curlywedge q_2 \in \Ker(\bar \mu_q)$, then $[q_1,q_2]=0$. So it is an easy task to check that  $\Ker(\bar\mu_q) \subseteq Z(\Lieq \curlywedge \Lieq)\cap st_{\Lieq \curlywedge \Lieq}(\Lieq \curlywedge \Lien )$, and the result follows.
\end{proof}

	\begin{lemma} \label{3.1}
		Let $(\Lieh,\Liep,\sigma)$ be a Leibniz crossed module and $(\Liea,\Lieb,\sigma $ be a  crossed ideal of $(\Lieh,\Liep,\sigma)$ such that $(\Liea,\Lieb,\sigma) \subseteq Z(\Lieh,\Liep,\sigma)$. Then the map $$\sigma \curlywedge \Id : I \longrightarrow \Lieb \curlywedge \Liep$$ is an abelian Leibniz crossed module, where $I$ is the ideal of $\Liep \curlywedge \Lieh$ generated by all elements $p \curlywedge a, a \curlywedge p, b \curlywedge h$ and $h\curlywedge b$ for any $p \in \Liep, a \in \Liea, b\in \Lieb$ and $h\in \Lieh$.
	\end{lemma}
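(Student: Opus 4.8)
The plan is to verify directly the three conditions that, by the characterization recalled in Section \ref{prelim}, make a Leibniz crossed module abelian: that $I$ is an abelian Leibniz algebra, that $\Lieb \curlywedge \Liep$ is an abelian Leibniz algebra, and that the action of $\Lieb \curlywedge \Liep$ on $I$ is trivial. Everything will be driven by unpacking the hypothesis $(\Liea,\Lieb,\sigma) \subseteq Z(\Lieh,\Liep,\sigma)$. By the description of the center, this says precisely that $\Liea \subseteq \Lieh^{\Liep}$, that $\Lieb \subseteq st_{\Liep}(\Lieh)$ and that $\Lieb \subseteq Z(\Liep)$; equivalently, $\Liep$ acts trivially on $\Liea$, the subalgebra $\Lieb$ acts trivially on $\Lieh$, and $[\Lieb,\Liep]=[\Liep,\Lieb]=0$. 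Since $\sigma(\Liea) \subseteq \Lieb$, the element $\sigma(a)$ is central in $\Liep$ for every $a \in \Liea$, and the mutual Leibniz actions of $\Lieb$ and $\Liep$ (induced by the bracket of $\Liep$) are trivial.

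First I would show that $\Lieb \curlywedge \Liep$ is abelian. Because the mutual actions of $\Lieb$ and $\Liep$ vanish, each of the bracket relations $(5a)$--$(5d)$ of Definition \ref{non abelian} expresses the bracket of two generators of $\Lieb \star \Liep$ as a tensor one of whose factors is a trivial action, hence as $0$. Thus $\Lieb \star \Liep$ is abelian, and so is its quotient $\Lieb \curlywedge \Liep$. Next I would prove the stronger statement that $I$ equals the linear span $S$ of its generating elements $p \curlywedge a,\ a \curlywedge p,\ b \curlywedge h,\ h \curlywedge b$. Using $(5a)$--$(5d)$ I would bracket an arbitrary generator of $\Liep \curlywedge \Lieh$ with each of these four families, on the left and on the right. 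In every case the resulting tensor contains one of the factors ${}^p a$, $a^p$ (the action of $\Liep$ on $\Liea$), ${}^a p = [\sigma(a),p]$ (via the central element $\sigma(a)$), ${}^b h$, $h^b$ (the action of $\Lieb$ on $\Lieh$), or a bracket with $b \in Z(\Liep)$, all of which vanish by the centrality relations above. Hence $[\Liep \curlywedge \Lieh,\,S]=[S,\,\Liep \curlywedge \Lieh]=0$, so $S$ is already an ideal and therefore $I=S$; in particular $[I,I]=0$ and $I$ is abelian.

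It remains to fix the crossed-module structure and check triviality of the action. The action of $\Lieb \curlywedge \Liep$ on $I$ is the one inherited, through the homomorphism $\Lieb \curlywedge \Liep \to \Liep \curlywedge \Liep$ induced by the inclusion $\Lieb \hookrightarrow \Liep$, from the action of $\Liep \curlywedge \Liep$ on $\Liep \curlywedge \Lieh$ of Proposition \ref{central}\,{\it (i)}. Since that action is ${}^{x}y = {}^{\overline{\mu}_{\Liep}(x)}y$ and $\overline{\mu}_{\Liep}(b \curlywedge p)=[b,p]=0$ for $b \in \Lieb$, the action of $\Lieb \curlywedge \Liep$ on $I$ is trivial (so $I$ is in particular invariant). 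The map $\sigma \curlywedge \Id$ is a Leibniz homomorphism, being the restriction to $I$ of the homomorphism induced, as in the Remark preceding Proposition \ref{central}, by $\sigma$ on the $\Lieh$-factors and $\Id$ on the $\Liep$-factors, whose value on each generator of $I$ lies in $\Lieb \curlywedge \Liep$. As both $I$ and $\Lieb \curlywedge \Liep$ are abelian and the action is trivial, the defining identities of a Leibniz crossed module hold trivially, and $\sigma \curlywedge \Id\colon I \to \Lieb \curlywedge \Liep$ is an abelian Leibniz crossed module.

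I expect the main obstacle to be purely organizational rather than conceptual: applying the correct relation among $(5a)$--$(5d)$ to each of the eight bracket types arising from the four generator families of $I$ (each bracketed on both sides), and matching each resulting tensor to the appropriate vanishing action, while keeping careful track of the two orderings in the exterior products $\Liep \curlywedge \Lieh$ and $\Lieb \curlywedge \Liep$. Once the centrality hypothesis is translated into the four triviality statements of the first paragraph, the remaining verifications are routine.
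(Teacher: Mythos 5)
Your proposal is correct and follows essentially the same route as the paper's proof: both translate the centrality hypothesis $(\Liea,\Lieb,\sigma)\subseteq Z(\Lieh,\Liep,\sigma)$ into vanishing actions and then use the bracket relations (5a)--(5d) of Definition \ref{non abelian} to kill all brackets among generators, concluding that $I$ and $\Lieb\curlywedge\Liep$ are abelian and that the crossed-module axioms then hold trivially. The only difference is one of thoroughness: the paper checks four representative brackets (via (5c)) and dismisses the rest with ``evidently,'' whereas you make explicit the auxiliary facts it leaves implicit, namely that $I$ coincides with the linear span of its generators and that the action of $\Lieb\curlywedge\Liep$ on $I$ is trivial because it factors through $\overline{\mu}_{\Liep}(b\curlywedge p)=[b,p]=0$.
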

	\begin{proof}
		By the assumption $\Lieb \subseteq Z(\Liep)\cap st_{\Liep}(\Lieh)$ and $\Liea \subseteq {\Lieh}^{\Liep}$, so  by relation (5c) in Definition \ref{non abelian}, we have
\[
\begin{array}{lcl}
		[b\curlywedge p, p' \curlywedge b']=[b,p]\curlywedge [p',b']=0, & & [ a \curlywedge p,p' \curlywedge a' ]=a^p \curlywedge p'^{a'}=0, \\

		[b\curlywedge h,b'\curlywedge h']={^b}h \curlywedge {b'}^{h'}=0, & & [a \curlywedge p, p' \curlywedge h']=a^p \curlywedge p'^{h'}=0,
\end{array}
\]
		for all $b, b' \in \Lieb, p, p'\in \Liep$, $a, a' \in \Liea$, and $h,h' \in \Lieh$. Therefore, $I$ and $\Lieb \curlywedge \Liep$ are abelian Leibniz algebras. Evidently, the canonical homomorphism $\sigma \curlywedge \Id$ is an abelian Leibniz crossed module.
	\end{proof}

\begin{lemma}\label{3.2}
	Let $\varphi=(\varphi_1,\varphi_2):(\Lieh,\Liep,\sigma)\longrightarrow (\Lien,\Lieq,\delta)$ be a surjective homomorphism of Leibniz crossed modules. Then $\varphi \curlywedge \varphi = (\varphi_2 \curlywedge \varphi_1,\varphi_2 \curlywedge \varphi_2):(\Liep\curlywedge \Lieh,\Liep \curlywedge \Liep,\Id  \curlywedge \sigma)\longrightarrow (\Lieq\curlywedge \Lien,\Lieq \curlywedge \Lieq, \Id  \curlywedge \delta)$ is also a surjective homomorphism of Leibniz crossed modules.
\end{lemma}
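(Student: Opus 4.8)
The plan is to verify that the pair $\varphi \curlywedge \varphi = (\varphi_2 \curlywedge \varphi_1, \varphi_2 \curlywedge \varphi_2)$ satisfies the two conditions defining a homomorphism of Leibniz crossed modules—compatibility with the boundary maps and preservation of the actions—and then to check surjectivity. That each component is already a well-defined homomorphism of Leibniz algebras is exactly the content of the Remark preceding Proposition \ref{central}, applied to the Leibniz homomorphisms $\varphi_1,\varphi_2$ together with the mutual actions on $\Liep,\Lieh$ and on $\Lieq,\Lien$; so the genuine work lies in the crossed-module bookkeeping and in surjectivity.

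For the commuting square I would check on generators that $(\Id \curlywedge \delta)\circ(\varphi_2 \curlywedge \varphi_1) = (\varphi_2 \curlywedge \varphi_2)\circ(\Id \curlywedge \sigma)$. Evaluating both sides on $p \curlywedge h$ yields $\varphi_2(p) \curlywedge \delta(\varphi_1(h))$ and $\varphi_2(p) \curlywedge \varphi_2(\sigma(h))$ respectively, and on $h \curlywedge p$ the analogous pair; equality is then precisely the relation $\delta \circ \varphi_1 = \varphi_2 \circ \sigma$, which holds because $(\varphi_1,\varphi_2)$ is itself a homomorphism of crossed modules.

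Preservation of the Leibniz action is the step I expect to be the most laborious. By Proposition \ref{central}{\it (i)} the action of $\Liep \curlywedge \Liep$ on $\Liep \curlywedge \Lieh$ (and likewise of $\Lieq \curlywedge \Lieq$ on $\Lieq \curlywedge \Lien$) factors through $\overline{\mu_{\Liep}}$, so it suffices to unwind a generator $^{p_1 \curlywedge p_2}(p \curlywedge h) = {}^{[p_1,p_2]}(p \curlywedge h)$ using the explicit $\Liep$-action formulas recorded before Proposition \ref{central}, namely $^{[p_1,p_2]}(p\curlywedge h)=[[p_1,p_2],p]\curlywedge h - {}^{[p_1,p_2]}h \curlywedge p$ and its right-hand analogue. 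Applying $\varphi_2 \curlywedge \varphi_1$ and using that $\varphi_2$ is a Leibniz homomorphism while $\varphi_1$ intertwines the actions via $\varphi_2$ (so $\varphi_1({}^{[p_1,p_2]}h) = {}^{[\varphi_2(p_1),\varphi_2(p_2)]}\varphi_1(h)$), one recovers exactly ${}^{\varphi_2(p_1)\curlywedge \varphi_2(p_2)}(\varphi_2(p)\curlywedge \varphi_1(h)) = {}^{(\varphi_2 \curlywedge \varphi_2)(p_1 \curlywedge p_2)}\big((\varphi_2 \curlywedge \varphi_1)(p \curlywedge h)\big)$. The same expansion handles the right action $y^x = y^{\overline{\mu_{\Liep}}(x)}$ and the remaining types of generators.

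Finally, surjectivity is immediate: $\Lieq \curlywedge \Lien$ is generated by the elements $q \curlywedge n$ and $n \curlywedge q$, and since $\varphi_1,\varphi_2$ are onto we may write $q=\varphi_2(p)$ and $n = \varphi_1(h)$, whence $q \curlywedge n = (\varphi_2 \curlywedge \varphi_1)(p \curlywedge h)$ and $n \curlywedge q = (\varphi_2 \curlywedge \varphi_1)(h \curlywedge p)$; the identical argument shows $\varphi_2 \curlywedge \varphi_2$ maps onto $\Lieq \curlywedge \Lieq$. As the images of the two algebra homomorphisms contain all generators, both are surjective, and therefore $\varphi \curlywedge \varphi$ is a surjective homomorphism of Leibniz crossed modules.
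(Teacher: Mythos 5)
Your proof is correct and follows essentially the same route as the paper's: both rely on the functorially induced component maps $\varphi_2 \curlywedge \varphi_1$ and $\varphi_2 \curlywedge \varphi_2$, verify the commuting square via $\delta \circ \varphi_1 = \varphi_2 \circ \sigma$, check action preservation on generators by expanding ${}^{[p_1,p_2]}(-)$ with the explicit action formulas, and get surjectivity because the generators of $\Lieq \curlywedge \Lien$ and $\Lieq \curlywedge \Lieq$ are hit. Your write-up is in fact slightly more careful than the paper's (which leaves the square and surjectivity as "easy" and "obvious"), but it is the same argument.
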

\begin{proof}
	Obviously, homomorphism $(\varphi_1,\varphi_2)$ induces surjective  homomorphisms of Leibniz algebras $\varphi_2 \curlywedge \varphi_1: \Liep \curlywedge \Lieh\longrightarrow \Lieq\curlywedge \Lien$ and $\varphi_2\curlywedge \varphi_2:\Liep\curlywedge \Liep \longrightarrow \Lieq \curlywedge \Lieq$. It is easy to check that $(\varphi_2\curlywedge \varphi_1) \circ (\Id  \curlywedge \sigma)=(\Id  \curlywedge \delta) \circ (\varphi_2\curlywedge \varphi_2)$ and $\varphi_2 \curlywedge \varphi_1 $ preserves the action of crossed module via $\varphi_2 \curlywedge \varphi_2$, for instance
	\begin{alignat*}{1}
	\varphi_2 \curlywedge \varphi_1 \left(^{p_1\curlywedge p_2}{h\curlywedge p}\right) &=\varphi_2 \curlywedge \varphi_1\left( ^{[p_1,p_2]}h \curlywedge p \right)\\
&=\varphi_2 \curlywedge \varphi_1 \left(^{[p_1,p_2]}h \curlywedge p - [[p_1,p_2],p]\curlywedge h \right)\\
	&=^{\varphi_2([p_1,p_2])}{\varphi_1(h)} \curlywedge \varphi_2(p) -[[\varphi_2(p_1),\varphi_2(p_2)],\varphi_2(p)] \curlywedge \varphi_1(h)\\
	&=^{\varphi_2(p_1)\curlywedge \varphi_2(p_2)}{(\varphi_1(h)\curlywedge \varphi_2(p))}.
	\end{alignat*}
	Therefore, it is a homomorphism of crossed modules, as required.
\end{proof}

\begin{remark} \label{rem3.2}
By assumptions of  Lemma \ref{3.2}, let $\Ker(\varphi_1,\varphi_2)=(\Liea,\Lieb,\sigma)$ be, then we have the natural induced map of Leibniz algebras
$\psi_1:\Liep \curlywedge \Liea+\Lieb \curlywedge \Liem\longrightarrow \Liep\curlywedge \Liem$ and $\psi_2:\Liep \curlywedge \Lieb\longrightarrow \Liep\curlywedge \Liep$, such that $\im(\psi_1)=\Ker(\varphi_2\curlywedge \varphi_1)$ and $\im(\psi_2)=\Ker(\varphi_2\curlywedge \varphi_2)$ (see \cite{DGMKh}). So we may assume that the $\Ker(\varphi_2\curlywedge \varphi_1)$ is an ideal of $\Liep \curlywedge \Liem$ generated by all elements $p \curlywedge a, a \curlywedge p,b \curlywedge m$ and $m \curlywedge b$ for any $p\in \Liep, a \in \Liea,m \in \Liem$ and $b \in \Lieb$. Moreover, $\Ker(\varphi_2\curlywedge \varphi_2)$ is an ideal of $\Liep \curlywedge \Liep$ generated by $p\curlywedge b$ and $b \curlywedge p$ for all $p \in \Liep$ and $b \in \Lieb$.
\end{remark}

\subsection{Connections between the Schur multiplier and the non-abelian exterior product}

Following result shows  the connection between a free presentation of the given crossed module $(\Lien,\Lieq,\delta)$ and the non-abelian exterior product of Leibniz algebras $\Lieq$ and $\Lien$.

\begin{theorem}
	Let $0 \longrightarrow (\mathfrak{u},\Lier,\mu) \longrightarrow (\mathfrak{m},\Lief,\mu)\stackrel{(\pi_1,\pi_2)} \longrightarrow (\Lien,\Lieq,\delta) \longrightarrow 0$ be a free presentation of the Leibniz crossed module $(\Lien,\Lieq,\delta)$. Then there is an isomorphism
	$$(\Lieq\curlywedge \Lien,\Lieq \curlywedge \Lieq, \Id \curlywedge \delta) \cong (\frac{[\Lief,\mathfrak{m}]}{[\Lief,\mathfrak{u}]+[\Lier,\Liem]},\frac{[\Lief,\Lief]}{[\Lier,\Lief]},\bar{\mu}).$$
\end{theorem}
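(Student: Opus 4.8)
The plan is to realise the two crossed modules in the statement as quotients of the exterior-product crossed module attached to the given free presentation, and then to replace the exterior products of the free terms by commutator ideals. First I would feed the surjective homomorphism $(\pi_1,\pi_2):(\Liem,\Lief,\mu)\to(\Lien,\Lieq,\delta)$ into Lemma \ref{3.2}, obtaining a surjective homomorphism of Leibniz crossed modules
$$(\pi_2\curlywedge\pi_1,\ \pi_2\curlywedge\pi_2):(\Lief\curlywedge\Liem,\ \Lief\curlywedge\Lief,\ \Id\curlywedge\mu)\longrightarrow(\Lieq\curlywedge\Lien,\ \Lieq\curlywedge\Lieq,\ \Id\curlywedge\delta),$$
whence $\Lieq\curlywedge\Lien\cong(\Lief\curlywedge\Liem)/\Ker(\pi_2\curlywedge\pi_1)$ and $\Lieq\curlywedge\Lieq\cong(\Lief\curlywedge\Lief)/\Ker(\pi_2\curlywedge\pi_2)$. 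By Remark \ref{rem3.2}, $\Ker(\pi_2\curlywedge\pi_1)$ is the ideal of $\Lief\curlywedge\Liem$ generated by the elements $f\curlywedge u$, $u\curlywedge f$, $r\curlywedge m$, $m\curlywedge r$, and $\Ker(\pi_2\curlywedge\pi_2)$ is the ideal of $\Lief\curlywedge\Lief$ generated by $f\curlywedge r$, $r\curlywedge f$, where $f\in\Lief$, $u\in\Lieu$, $r\in\Lier$, $m\in\Liem$.

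Next, since $(\Liem,\Lief,\mu)$ is free, Lemma \ref{projective} {\it (ii)(b)} tells me that $\mu$ is injective and that $\Lief$ is a projective (free) Leibniz algebra; I therefore identify $\Liem$ with the ideal $\mu(\Liem)$ of $\Lief$, so that $\Lieu\subseteq\Lier$ as ideals of $\Lief$ and the mutual actions are implemented by the bracket of $\Lief$. The key input is then the commutator description of the exterior product on the free side: combining the results of \cite{DGMKh} with the vanishing $HL_2(\Lief)=0$ (the Remark following Lemma \ref{projective}), the commutator maps
$$\kappa:\Lief\curlywedge\Lief\longrightarrow[\Lief,\Lief],\quad x\curlywedge y\mapsto[x,y],\qquad \kappa':\Lief\curlywedge\Liem\longrightarrow[\Lief,\Liem],\quad f\curlywedge m\mapsto{}^{f}m,\ m\curlywedge f\mapsto m^{f},$$
are isomorphisms onto the respective commutator ideals.

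I would then transport the two kernels through $\kappa$ and $\kappa'$. Because $\Lieu$ and $\Lier$ are ideals of $\Lief$, the subspaces $[\Lief,\Lieu]$, $[\Lier,\Liem]$ and $[\Lier,\Lief]$ are themselves ideals, so the ideals generated by the listed generators coincide with the corresponding subspace sums; concretely $\kappa'$ sends the generators of $\Ker(\pi_2\curlywedge\pi_1)$ onto a spanning set of $[\Lief,\Lieu]+[\Lier,\Liem]$ and $\kappa$ sends those of $\Ker(\pi_2\curlywedge\pi_2)$ onto $[\Lier,\Lief]$. Passing to quotients yields the component isomorphisms
$$\Lieq\curlywedge\Lien\cong\frac{[\Lief,\Liem]}{[\Lief,\Lieu]+[\Lier,\Liem]},\qquad \Lieq\curlywedge\Lieq\cong\frac{[\Lief,\Lief]}{[\Lier,\Lief]}.$$
To finish I would verify that these intertwine $\Id\curlywedge\delta$ with the map $\bar\mu$ induced by the inclusion $[\Lief,\Liem]\hookrightarrow[\Lief,\Lief]$: this is immediate from the homomorphism identity $(\Id\curlywedge\delta)\circ(\pi_2\curlywedge\pi_1)=(\pi_2\curlywedge\pi_2)\circ(\Id\curlywedge\mu)$ supplied by Lemma \ref{3.2}, together with the fact that $\kappa$ and $\kappa'$ both intertwine $\Id\curlywedge\mu$ with the inclusion of commutator ideals, so the whole square commutes and the pair of component isomorphisms is an isomorphism of crossed modules.

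I expect the main obstacle to be the second step, namely the commutator isomorphisms for the free term, and above all $\kappa':\Lief\curlywedge\Liem\xrightarrow{\ \cong\ }[\Lief,\Liem]$. Here $\Liem$ is merely an ideal of the free algebra $\Lief$ and need not be free on its own, so the injectivity of $\kappa'$ genuinely rests on the vanishing of the relevant low-dimensional Leibniz homology and is precisely the point where freeness of the presentation is used. A minor secondary check is that the ideals generated by the generators in Remark \ref{rem3.2} equal the stated subspace sums, which follows from $\Lieu$ and $\Lier$ being ideals of $\Lief$.
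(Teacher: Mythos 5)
Your proposal follows essentially the same route as the paper's proof: Lemma \ref{3.2} supplies the surjection of exterior-product crossed modules, Remark \ref{rem3.2} identifies the two kernels, the commutator maps on the free side are isomorphisms by the results of \cite{DGMKh} combined with the homology vanishing from Lemma \ref{projective} \emph{(ii)}, and the kernels are transported through these isomorphisms to yield the quotient description. The obstacle you flag for $\kappa':\Lief\curlywedge\Liem\to[\Lief,\Liem]$ is handled in the paper exactly as you anticipate: Lemma \ref{projective} \emph{(ii)(b)} gives that $\Lief$ and $\Lief/\mu(\Liem)$ are projective with $HL_i(\Lief)=0=HL_i(\Lief/\mu(\Liem))$ for $i\geq 2$ (not merely $HL_2(\Lief)=0$), and then Propositions 2 and 7 of \cite{DGMKh} yield that $\kappa'$ is an isomorphism.
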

\begin{proof}
	According to Lemma \ref{projective} {\it (ii)}, $\Lief$ and $\Lief/\Liem$ are projective Leibniz algebras and so $HL_i(\Lief)=0=HL_i(\Lief/\Liem)$ for $i\geq 2$. So by \cite[Proposition 2 and Proposition 7]{DGMKh} the surjective homomorphism $\theta_{\Lief,\mathfrak{m}}: \Lief \curlywedge \mathfrak{m}\longrightarrow [\Lief,\mathfrak{m}]$ is an isomorphism. It is easy to see that $\theta_{\Lief,\mathfrak{m}} \left( \Ker(\pi_2\curlywedge \pi_1) \right) =[\Lief,\mathfrak{u}]+[\Lier,\mathfrak{m}]$ by Remark \ref{rem3.2}. So, it gives rise to the isomorphism
	$$\bar{\theta}_{\Lief,\mathfrak{m}}:\frac{\Lief \curlywedge \mathfrak{m}}{\Ker(\pi_2\curlywedge \pi_1)}\longrightarrow \frac{[\Lief,\mathfrak{m}]}{[\Lief,\mathfrak{u}]+[\Lier,\mathfrak{m}]}.$$
	Also, invoking  \cite[Theorem 4]{DGMKh}, $\Ker(\Lief\curlywedge \Lief\longrightarrow \Lief)=HL_2(f)=0$ so the surjection $\theta_{\Lief,\Lief}:\Lief\curlywedge \Lief\longrightarrow
	[\Lief, \Lief]$ is an isomorphism in which $\theta_{\Lief,\Lief}(\Ker(\pi_2\curlywedge \pi_2))=[\Lief,\Lier]$.
	So we obtain the induced isomorphism $\bar{\theta}_{\Lief,\Lief}:\Lief\curlywedge \Lief/\Ker(\pi_2\curlywedge \pi_2) \longrightarrow [\Lief,\Lief]/[\Lief,\Lier]$. Easily, the pair $(\bar{\theta}_{\Lief,\mathfrak{m}},\bar{\theta}_{\Lief,\Lief})$ is an isomorphism of crossed modules. Therefore, we conclude from Lemma \ref{3.2} that
	$$(\Lieq\curlywedge \Lien,\Lieq \curlywedge \Lieq, \Id \curlywedge \delta) \cong \frac{(\Lief \curlywedge \mathfrak{m} ,\Lief \curlywedge \Lief, \Id \curlywedge \mu)}{\Ker(\pi_2\curlywedge \pi_1,\pi_2\curlywedge \pi_2)} \cong \left( \frac{[\Lief,\mathfrak{m}]}{[\Lief,\mathfrak{u}]+[\Lier,\Liem]},\frac{[\Lief,\Lief]}{[\Lier,\Lief]},\bar{\mu} \right).$$
	The proof is complete.
\end{proof}

For any Leibniz algebra $\Lieq$ we have $HL_2(\Lieq)\cong \Ker(\Lieq \curlywedge \Lieq\longrightarrow \Lieq)$ \cite[Theorem 4]{DGMKh}.  As an immediate consequence of the above Theorem,  we generalize this result for Leibniz crossed modules as follows:

\begin{corollary}\label{cor3.1}
	Let $(\Lien,\Lieq,\delta)$ be a Leibniz crossed module. Then we have
\[
\begin{array}{rcl}
		{\cal M}(\Lien,\Lieq,\delta) & \cong & \Ker \left((\Lieq \curlywedge \Lien,\Lieq \curlywedge \Lieq, \Id \curlywedge \delta) \longrightarrow (\Lien,\Lieq,\delta) \right)\\
		& = & \left (\Ker(\Lieq \curlywedge \Lien \longrightarrow \Lien), \Ker(\Lieq \curlywedge \Lieq \longrightarrow \Lieq),\Id \curlywedge \delta \right).
	\end{array}
\]
\end{corollary}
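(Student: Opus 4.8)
The plan is to obtain the Corollary directly from the preceding Theorem by identifying the comparison homomorphism $\phi$ of Proposition~\ref{central}\,{\it (iii)} with a pair of canonical projections and then reading off its kernel componentwise.

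First I would fix the free presentation $0 \to (\Lieu,\Lier,\mu) \to (\Liem,\Lief,\mu) \xrightarrow{(\pi_1,\pi_2)} (\Lien,\Lieq,\delta) \to 0$, so that $(\Lien,\Lieq,\delta) \cong (\Liem/\Lieu,\Lief/\Lier,\bar\mu)$ with $\pi_1,\pi_2$ the quotient maps, and invoke the preceding Theorem to replace $(\Lieq\curlywedge\Lien,\Lieq\curlywedge\Lieq,\Id\curlywedge\delta)$ by the isomorphic crossed module $\bigl(\tfrac{[\Lief,\Liem]}{[\Lief,\Lieu]+[\Lier,\Liem]},\tfrac{[\Lief,\Lief]}{[\Lier,\Lief]},\bar\mu\bigr)$ through the isomorphisms $\bar\theta_{\Lief,\Liem}$ and $\bar\theta_{\Lief,\Lief}$ constructed in that proof.

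Next I would check that, under this identification, $\phi=(\bar\lambda_{\Lien},\bar\mu_{\Lieq})$ becomes the pair of maps induced by the inclusions of ideals $[\Lief,\Liem]\hookrightarrow\Liem$ and $[\Lief,\Lief]\hookrightarrow\Lief$ followed by the projections onto $\Liem/\Lieu$ and $\Lief/\Lier$. This is a generator chase: since $\theta_{\Lief,\Liem}(f\curlywedge m)=[f,m]$ while $\bar\lambda_{\Lien}(q\curlywedge n)={}^qn$, choosing $f,m$ with $\pi_2(f)=q$ and $\pi_1(m)=n$ and using that $\mu$ is injective (Lemma~\ref{projective}), so that ${}^f m=[f,m]$ inside $\Lief$ and ${}^q n=\pi_1([f,m])$, sends $q\curlywedge n$ to the class of $[f,m]$ in $\Liem/\Lieu$; the same argument with $\theta_{\Lief,\Lief}$ handles $\bar\mu_{\Lieq}$. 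Consequently the kernel of $\phi$ is computed componentwise as $\Ker(\bar\lambda_{\Lien})=\tfrac{\Lieu\cap[\Lief,\Liem]}{[\Lief,\Lieu]+[\Lier,\Liem]}$ and $\Ker(\bar\mu_{\Lieq})=\tfrac{\Lier\cap[\Lief,\Lief]}{[\Lier,\Lief]}$.

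Finally I would match these quotients with the Baer invariant. Using the derived crossed module formula $[(\Liem,\Lief,\mu),(\Liem,\Lief,\mu)]=(D_{\Lief}(\Liem),[\Lief,\Lief],\mu_{\mid})$ and the commutator formula $[(\Lieu,\Lier,\mu),(\Liem,\Lief,\mu)]=(\langle D_{\Lier}(\Liem),D_{\Lief}(\Lieu)\rangle,[\Lier,\Lief],\mu_{\mid})$ from Section~\ref{prelim}, the two components of $\mathcal{M}(\Lien,\Lieq,\delta)=\tfrac{(\Lieu,\Lier,\mu)\cap[(\Liem,\Lief,\mu),(\Liem,\Lief,\mu)]}{[(\Lieu,\Lier,\mu),(\Liem,\Lief,\mu)]}$ are precisely $\tfrac{\Lieu\cap[\Lief,\Liem]}{[\Lief,\Lieu]+[\Lier,\Liem]}$ and $\tfrac{\Lier\cap[\Lief,\Lief]}{[\Lier,\Lief]}$, which coincide with the two kernels above and carry the same induced map $\Id\curlywedge\delta$; this yields both the displayed isomorphism and its explicit componentwise form. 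I expect the only real obstacle to be the bookkeeping of the middle step: verifying that $\phi$ genuinely corresponds to projection under the Theorem's isomorphism, and confirming that the generated ideal $\langle D_{\Lier}(\Liem),D_{\Lief}(\Lieu)\rangle$ equals $[\Lief,\Lieu]+[\Lier,\Liem]$, so that the denominators agree.
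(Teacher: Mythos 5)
Your proposal is correct and matches the paper's approach: the paper states this corollary as an immediate consequence of the preceding Theorem, and your argument is precisely the filling-in of that implication --- transporting $\phi$ along the Theorem's isomorphism, computing its kernel componentwise, and recognizing the result as the Baer invariant defining ${\cal M}(\Lien,\Lieq,\delta)$. The bookkeeping points you flag (that $\phi$ corresponds to the induced projection, and that $\langle D_{\Lier}(\Liem), D_{\Lief}(\Lieu)\rangle = [\Lief,\Lieu]+[\Lier,\Liem]$ once the injectivity of $\mu$ is used to identify $\Liem$ with an ideal of $\Lief$) are genuine but routine, and your handling of them is sound.
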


\begin{remark}
Corollary \ref{cor3.1} shows that for any abelian Leibniz crossed module $(\Liea,\Lieb,\sigma)$ we have ${\cal M}(\Liea,\Lieb,\sigma)=(\Lieb\curlywedge \Liea,\Lieb\curlywedge \Lieb,\Id \curlywedge \sigma)$.
\end{remark}

Now we extend  sequence (\ref{five term}) to a six-term natural exact sequence as follows:

\begin{theorem}\label{th3.2}
	Let $(e): 0 \longrightarrow (\Liea,\Lieb,\sigma) \longrightarrow (\Lieh,\Liep,\sigma)\stackrel{\varphi } \longrightarrow (\Lien,\Lieq,\delta) \longrightarrow 0$ be a central extension of Leibniz crossed modules, then the exact sequence (\ref{five term}) can be extended one term further to the following natural exact sequence
\begin{equation} \label{six term}	
\xymatrix{
 & (I,\Lieb \curlywedge \Liep,\sigma \curlywedge \Id) \ar[r] & \mathcal{M}(\Lieh, \Liep, \sigma) \ar[r] & \mathcal{M}(\Lien, \Lieq, \delta)   \ar`r[dl] `[l]`[llld] `[d] [lld]\\
& (\Liea,\Lieb,\sigma) \ar[r] & (\Lieh,\Liep,\sigma)_{\rm ab} \ar[r] & (\Lien,\Lieq,\delta)_{\rm ab} \ar[r] & 0.
}
\end{equation}
\end{theorem}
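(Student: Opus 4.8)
The plan is to realize both Schur multipliers as kernels via Corollary \ref{cor3.1}, and then run the snake lemma on the morphism of short exact sequences induced by $\varphi$ between the non-abelian exterior-square crossed modules and the crossed modules themselves. The four terms on the right of \eqref{six term} are already exactly \eqref{five term} (with $[(\Liea,\Lieb,\sigma),(\Lieh,\Liep,\sigma)]=0$ by centrality, so the quotient there equals $(\Liea,\Lieb,\sigma)$), so the genuinely new content is the first term $(I,\Lieb\curlywedge\Liep,\sigma\curlywedge\Id)$ together with exactness at $\mathcal{M}(\Lieh,\Liep,\sigma)$.

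First I would apply Proposition \ref{central}(iii) to each of $(\Lieh,\Liep,\sigma)$ and $(\Lien,\Lieq,\delta)$, obtaining the homomorphisms $\phi_{\Lieh}=(\bar\lambda_{\Lieh},\bar\mu_{\Liep}):(\Liep\curlywedge\Lieh,\Liep\curlywedge\Liep,\Id\curlywedge\sigma)\to(\Lieh,\Liep,\sigma)$ and $\phi_{\Lien}=(\bar\lambda_{\Lien},\bar\mu_{\Lieq})$, whose kernels are $\mathcal{M}(\Lieh,\Liep,\sigma)$ and $\mathcal{M}(\Lien,\Lieq,\delta)$ by Corollary \ref{cor3.1}, and whose images are the derived crossed modules, so that $\Cok(\phi_{\Lieh})=(\Lieh,\Liep,\sigma)_{\rm ab}$ and $\Cok(\phi_{\Lien})=(\Lien,\Lieq,\delta)_{\rm ab}$. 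By Lemma \ref{3.2} the map $\varphi\curlywedge\varphi$ is surjective, and by Remark \ref{rem3.2} together with Lemma \ref{3.1} its kernel is precisely the abelian crossed module $(I,\Lieb\curlywedge\Liep,\sigma\curlywedge\Id)$. These data assemble into a commutative diagram with exact rows, the top row being
\[
0\to(I,\Lieb\curlywedge\Liep,\sigma\curlywedge\Id)\to(\Liep\curlywedge\Lieh,\Liep\curlywedge\Liep,\Id\curlywedge\sigma)\xrightarrow{\varphi\curlywedge\varphi}(\Lieq\curlywedge\Lien,\Lieq\curlywedge\Lieq,\Id\curlywedge\delta)\to0,
\]
the bottom row being the central extension $(e)$, the middle and right vertical maps being $\phi_{\Lieh},\phi_{\Lien}$, and the left vertical map being the restriction of $\phi_{\Lieh}$, which factors through $\Ker(\varphi)=(\Liea,\Lieb,\sigma)$ since $\varphi\circ\phi_{\Lieh}=\phi_{\Lien}\circ(\varphi\curlywedge\varphi)$.

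The crucial point is that this left vertical map is the \emph{zero} morphism, and this is where the centrality hypothesis is used. Since $(\Liea,\Lieb,\sigma)\subseteq Z(\Lieh,\Liep,\sigma)$ we have $\Liea\subseteq\Lieh^{\Liep}$ and $\Lieb\subseteq Z(\Liep)\cap st_{\Liep}(\Lieh)$; hence on the generators of $I$ one computes $\bar\lambda_{\Lieh}(p\curlywedge a)={}^{p}a=0$, $\bar\lambda_{\Lieh}(a\curlywedge p)=a^{p}=0$, $\bar\lambda_{\Lieh}(b\curlywedge h)={}^{b}h=0$, $\bar\lambda_{\Lieh}(h\curlywedge b)=h^{b}=0$, while on the bottom component $\bar\mu_{\Liep}(p\curlywedge b)=[p,b]=0=[b,p]=\bar\mu_{\Liep}(b\curlywedge p)$. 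Therefore the kernel of the left vertical map is all of $(I,\Lieb\curlywedge\Liep,\sigma\curlywedge\Id)$ and its cokernel is $(\Liea,\Lieb,\sigma)$. Invoking the snake lemma, which is available because $\mathbf{XLb}$ is semi-abelian, then yields the exact sequence
\[
0\to(I,\Lieb\curlywedge\Liep,\sigma\curlywedge\Id)\to\mathcal{M}(\Lieh,\Liep,\sigma)\to\mathcal{M}(\Lien,\Lieq,\delta)\xrightarrow{\partial}(\Liea,\Lieb,\sigma)\to(\Lieh,\Liep,\sigma)_{\rm ab}\to(\Lien,\Lieq,\delta)_{\rm ab}\to0,
\]
which is \eqref{six term}; naturality follows from naturality of $\phi$, of $\varphi\curlywedge\varphi$ and of the connecting morphism.

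I expect the main obstacle to be twofold. The computation showing the left vertical map vanishes is the one place where centrality is indispensable, and it must be verified on all generators and then propagated to the whole ideal $I$. The second delicate point is the legitimacy of the snake lemma in $\mathbf{XLb}$: one must confirm that both rows are genuine short exact sequences of crossed modules (surjectivity from Lemma \ref{3.2}, kernel identification from Remark \ref{rem3.2} and Lemma \ref{3.1}) and that the connecting morphism $\partial$ produced in the semi-abelian setting coincides with $\theta_{\ast}(e)$, so that \eqref{six term} genuinely extends \eqref{five term} rather than merely resembling it in its last four terms.
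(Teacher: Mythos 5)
Your proposal follows essentially the same route as the paper's own proof: the same commutative diagram placing the exterior-square crossed modules over the central extension $(e)$, kernels identified with the Schur multipliers via Corollary \ref{cor3.1}, cokernels identified with the abelianizations, the left vertical map annihilated by centrality, and the snake lemma invoked in the semi-abelian category $\mathbf{XLb}$. Your explicit generator computation showing $\bar\lambda_{\Lieh}(p\curlywedge a)={}^{p}a=0$, etc., is precisely the centrality step that the paper compresses into the assertion $\im(\bar{\lambda}_{\Lieh},\bar{\lambda}_{\Liep})_{\mid}=0$.

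There is, however, one overstatement you should repair. You claim the top row is a genuine short exact sequence, i.e. that $\Ker(\varphi\curlywedge\varphi)$ \emph{equals} $(I,\Lieb\curlywedge\Liep,\sigma\curlywedge\Id)$ and that this crossed module injects into $(\Liep\curlywedge\Lieh,\Liep\curlywedge\Liep,\Id\curlywedge\sigma)$. What Remark \ref{rem3.2} actually provides is that $\Ker(\varphi_2\curlywedge\varphi_2)$ is the \emph{image} of the natural map $\alpha:\Lieb\curlywedge\Liep\longrightarrow\Liep\curlywedge\Liep$ (the ideal generated by the elements $p\curlywedge b$ and $b\curlywedge p$), and this $\alpha$ need not be injective; only the first component $\beta:I\longrightarrow\Liep\curlywedge\Lieh$ is an inclusion. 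This is why the paper treats the top row as exact only at the middle and right terms, with $\im(\phi)=\Ker(\varphi\curlywedge\varphi)$ and $\varphi\curlywedge\varphi$ surjective, and why the six-term sequence \eqref{six term} carries no leading zero and its first map is not asserted to be injective. Your snake-lemma argument survives this correction verbatim: with the left vertical map zero, the snake lemma applied to the right-exact top row and the short exact bottom row yields exactly \eqref{six term}, and exactness at $\mathcal{M}(\Lieh,\Liep,\sigma)$ holds because the image of the composite $(I,\Lieb\curlywedge\Liep,\sigma\curlywedge\Id)\twoheadrightarrow\Ker(\varphi\curlywedge\varphi)\hookrightarrow\mathcal{M}(\Lieh,\Liep,\sigma)$ is still the kernel of the next map. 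Only your claims that the first term injects and that it coincides with the kernel on the nose must be dropped; the theorem as stated never asserts them.
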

\begin{proof}
	Considering the inclusion map $\beta:I\longrightarrow \Liep \curlywedge \Lieh$ and the functional homomorphism
		$\alpha: \Lieb \curlywedge \Liep \longrightarrow \Liep \curlywedge \Liep$ one easily sees that $\phi=(\beta,\alpha):(I,\Lieb \curlywedge \Liep,\sigma \curlywedge \Id )
		\longrightarrow (\Liep \curlywedge \Lieh,\Liep\curlywedge \Liep, \Id\curlywedge \sigma)$ is a homomorphism of crossed modules. Thanks to Remark \ref{rem3.2} we have the following commutative diagram:
	\[  \footnotesize{ \xymatrix{
& (I,\Lieb \curlywedge \Liep,\sigma \curlywedge \Id )  \ar[r]^{\phi} \ar[d]^{(\bar{\lambda}_{\Lieh}~,~ \bar{\lambda}_{\Liep}~)_{\mid}}& (\Liep\curlywedge \Lieh,\Liep\curlywedge \Liep, \Id \curlywedge\sigma) \ar[r]^{\varphi \curlywedge \varphi} \ar[d]^{(\bar{\lambda}_{\frak{h}}~,~ \bar{\lambda}_{\frak{p}}~)} & (\Lieq\curlywedge\Lien,\Lieq\curlywedge\Lieq,\Id \curlywedge\delta) \ar[r] \ar[d]^{(\bar{\lambda}_{\Lien}~,~ \bar{\lambda}_{\Lieq}~)} & 0\\
  0 \ar[r] & (\Liea,\Lieb,\sigma) \ar[r] &(\Lieh,\Liep,\sigma) \ar[r]^{\varphi} & (\Lien,\Lieq,\delta) \ar[r] & 0. }
} \]
Now the Snake Lemma (which is valid in any semi-abelian category \cite{BB}) completes the proof, thanks to Corollary \ref{cor3.1} and since $\im(\bar{\lambda}_{\frak{h}},\bar{\lambda}_{\frak{p}})_{\mid} = 0$, and  ${\sf Coker}(\bar{\lambda}_{\frak{h}},\bar{\lambda}_{\frak{p}}) \cong (\Lieh,\Liep,\sigma)_{\ab}$, and ${\sf Coker}(\bar{\lambda}_{\frak{n}},\bar{\lambda}_{\frak{q}}) \cong (\Lien,\Lieq,\delta)_{\ab}$.
\end{proof}

\begin{remark}
	If we consider Leibniz algebras as crossed modules in any of the two usual ways  (Example \ref{2.1} {\it (i)}), we get the corresponding results for Leibniz algebras in \cite[Proposition 7]{DGMKh}.
\end{remark}


 \section{Stem extensions and stem covers of Leibniz crossed modules} \label{stem}
 This section is devoted to generalize the notions of stem extension and stem cover of Leibniz algebras to Leibniz crossed modules context. For that the homomorphism $\theta_{\ast}(e)$ in exact sequence (\ref{five term}) plays a central role. When a Leibniz algebra is regarded as a Leibniz crossed module in the two usual ways (Example \ref{2.1} {\it (i)}), then the subsequent results recover the corresponding ones for stem extensions and stem cover of Leibniz algebras in \cite{CL2, EV}.

 \begin{definition}
 A central extension of Leibniz crossed modules $(e) : 0 \to (\Liea,\Lieb,\sigma)$ $\to (\Lieh,\Liep,\sigma) \stackrel{\varphi=(\varphi_1,\varphi_2)}\to (\Lien,\Lieq,\delta) \to 0$  is said to be a stem extension  if $(\Liea,\Lieb,\sigma) \subseteq [(\Lieh,\Liep,\sigma), $ $(\Lieh,\Liep,\sigma)]$.

 Also, if $(\Liea,\Lieb,\sigma)\cong {\cal M}(\Lien,\Lieq,\delta)$, then the stem extension $(e)$ is called a stem cover or covering of $(\Lien,\Lieq,\delta)$.
\end{definition}

The following result provides a characterization of stem extensions and stem covers.

\begin{proposition} \label{4.1}
  Let $(e) : 0 \to (\Liea,\Lieb,\sigma)$ $\to (\Lieh,\Liep,\sigma)\stackrel{\varphi=(\varphi_1,\varphi_2)}\to (\Lien,\Lieq,\delta) \to 0$  be a central extension of Leibniz crossed modules, then:
\begin{enumerate}
  \item[(i)] The following statements are equivalent:
  \begin{enumerate}
  \item[(a)] $(e)$ is stem extension of $(\Lien,\Lieq,\delta)$.
  \item[(b)] The homomorphism $\theta_{\ast}(e): {\cal M}(\Lien,\Lieq,\delta)\longrightarrow (\Liea,\Lieb,\sigma)$ is surjective.
  \item[(c)] The homomorphism $(\Liea,\Lieb,\sigma)\longrightarrow (\Lieh,\Liep,\sigma)/[(\Lieh,\Liep,\sigma),(\Lieh,\Liep,\sigma)]$ is the zero map.
  \item[(d)] The homomorphism $\frac{(\Lieh,\Liep,\sigma)}{[(\Lieh,\Liep,\sigma),(\Lieh,\Liep,\sigma)]} \longrightarrow \frac{(\Lien,\Lieq,\delta)}{[(\Lien,\Lieq,\delta),(\Lien,\Lieq,\delta)]}$ is an isomorphism.
  \end{enumerate}
 \item[(ii)] Under the assumption  ${\cal M}(\Lien,\Lieq,\delta)$ is finite dimensional in the central extension (e), the following statements are equivalent:
 \begin{enumerate}
  \item[(a)]  $(e)$ is a stem cover.
   \item[(b)]  $\theta_{\ast}(e)$ is an isomorphism.
    \item[(c)] The homomorphism $\frac{(\Lieh,\Liep,\sigma)}{[(\Lieh,\Liep,\sigma),(\Lieh,\Liep,\sigma)]} \longrightarrow \frac{(\Lien,\Lieq,\delta)}{[(\Lien,\Lieq,\delta),(\Lien,\Lieq,\delta)]}$ is an isomorphism and the induced homomorphism ${\cal M}(\Lieh,\Liep,\sigma) \longrightarrow {\cal M}(\Lien,\Lieq,\delta)$ is the zero map.
    \end{enumerate}
  \end{enumerate}
\end{proposition}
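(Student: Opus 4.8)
The plan is to deduce the entire statement from the five-term exact sequence~(\ref{five term}), exploiting the hypothesis of centrality to simplify it. First I would note that since $(e)$ is central we have $[(\Liea,\Lieb,\sigma),(\Lieh,\Liep,\sigma)]=0$, so the third term of~(\ref{five term}) collapses to $(\Liea,\Lieb,\sigma)$ itself, and the sequence reads
\[
\begin{array}{c}
\mathcal{M}(\Lieh,\Liep,\sigma)\xrightarrow{g}\mathcal{M}(\Lien,\Lieq,\delta)\xrightarrow{\theta_{\ast}(e)}(\Liea,\Lieb,\sigma)\\
\xrightarrow{\iota}(\Lieh,\Liep,\sigma)_{\rm ab}\xrightarrow{\varphi_{\rm ab}}(\Lien,\Lieq,\delta)_{\rm ab}\to 0,
\end{array}
\]
where $\iota$ is induced by the inclusion $(\Liea,\Lieb,\sigma)\hookrightarrow(\Lieh,\Liep,\sigma)$ composed with the projection onto the abelianization, and $g,\varphi_{\rm ab}$ are induced by $\varphi$. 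Every remaining assertion is then a reading of this sequence.

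For part~(i) the pivot is the equivalence (a)$\Leftrightarrow$(c): by construction $\iota=0$ exactly when $(\Liea,\Lieb,\sigma)\subseteq[(\Lieh,\Liep,\sigma),(\Lieh,\Liep,\sigma)]$, which is the definition of a stem extension, so this step is essentially a restatement of definitions via the description of $(\Lieh,\Liep,\sigma)_{\rm ab}$ as the quotient by the derived crossed module. The other two equivalences I would obtain purely by exactness: since $\im(\theta_{\ast}(e))=\Ker(\iota)$, the map $\theta_{\ast}(e)$ is onto iff $\Ker(\iota)=(\Liea,\Lieb,\sigma)$ iff $\iota=0$, giving (b)$\Leftrightarrow$(c); and since $\varphi_{\rm ab}$ is always surjective with $\Ker(\varphi_{\rm ab})=\im(\iota)$, it is an isomorphism iff $\im(\iota)=0$ iff $\iota=0$, giving (c)$\Leftrightarrow$(d).

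For part~(ii) I would first treat (a)$\Leftrightarrow$(b). A stem cover is a stem extension with $(\Liea,\Lieb,\sigma)\cong\mathcal{M}(\Lien,\Lieq,\delta)$; if $(e)$ is a stem cover then part~(i) makes $\theta_{\ast}(e)$ surjective, and as its source and target then have equal finite dimension in each component, a surjective linear map of equal-dimensional spaces is bijective, so $\theta_{\ast}(e)$ is an isomorphism of crossed modules. Conversely, an isomorphism $\theta_{\ast}(e)$ is in particular onto, hence $(e)$ is a stem extension by~(i), and the map itself exhibits $(\Liea,\Lieb,\sigma)\cong\mathcal{M}(\Lien,\Lieq,\delta)$, so $(e)$ is a stem cover. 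For (b)$\Leftrightarrow$(c) I would split ``$\theta_{\ast}(e)$ is an isomorphism'' into surjectivity and injectivity: surjectivity is, exactly as in part~(i), equivalent to $\iota=0$ and hence to $\varphi_{\rm ab}$ being an isomorphism; injectivity means $\Ker(\theta_{\ast}(e))=0$, and since exactness gives $\Ker(\theta_{\ast}(e))=\im(g)$, this is equivalent to $g=0$, i.e.\ to the induced map $\mathcal{M}(\Lieh,\Liep,\sigma)\to\mathcal{M}(\Lien,\Lieq,\delta)$ vanishing.

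The single non-formal step, and the one I expect to require care, is the upgrade from a surjection to an isomorphism in part~(ii)(a)$\Leftrightarrow$(b): this is precisely where the finite-dimensionality of $\mathcal{M}(\Lien,\Lieq,\delta)$ is indispensable, since for infinite-dimensional crossed modules a surjection between abstractly isomorphic objects need not be bijective. Everything else is a formal consequence of the exactness of~(\ref{five term}) together with the centrality reduction, so no separate computation is needed.
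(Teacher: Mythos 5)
Your proposal is correct and follows exactly the paper's approach: the paper's entire proof is the one-line remark ``Direct checking from exact sequence~(\ref{five term})'', and your argument is precisely that checking carried out in detail, including the centrality collapse of the third term and the dimension-counting step needed for part~(ii). No discrepancies.
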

\begin{proof}
  Direct checking from exact sequence (\ref{five term}).
\end{proof}

\begin{corollary} \label{cor4.1}
  Let $(\Lien,\Lieq,\delta)$ be a perfect Leibniz crossed module. Then the central extension $(e) : 0 \to (\Liea,\Lieb,\sigma)$ $\to (\Lieh,\Liep,\sigma) \to (\Lien,\Lieq,\delta) \to 0$ is a stem cover if and only if $(\Lieh,\Liep,\sigma)_{\rm ab} = {\cal M}(\Lieh,\Liep,\sigma)=0$.
\end{corollary}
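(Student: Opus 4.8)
The plan is to read everything off the six-term exact sequence (\ref{six term}) after specialising it to the present hypotheses. Since $(\Lien,\Lieq,\delta)$ is perfect we have $(\Lien,\Lieq,\delta)_{\rm ab}=0$, and since $(e)$ is central we have $[(\Liea,\Lieb,\sigma),(\Lieh,\Liep,\sigma)]=0$, so the fourth term of (\ref{six term}) is exactly $(\Liea,\Lieb,\sigma)$. Thus the sequence collapses to the exact
\[
(I,\Lieb\curlywedge\Liep,\sigma\curlywedge\Id)\xrightarrow{\rho}\mathcal{M}(\Lieh,\Liep,\sigma)\xrightarrow{\kappa}\mathcal{M}(\Lien,\Lieq,\delta)\xrightarrow{\theta_{\ast}(e)}(\Liea,\Lieb,\sigma)\longrightarrow(\Lieh,\Liep,\sigma)_{\rm ab}\longrightarrow 0,
\]
in which $(\Liea,\Lieb,\sigma)\to(\Lieh,\Liep,\sigma)_{\rm ab}$ is onto. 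This reduced sequence is the object I would manipulate throughout.

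For the \emph{if} direction, suppose $(\Lieh,\Liep,\sigma)_{\rm ab}=\mathcal{M}(\Lieh,\Liep,\sigma)=0$. Because $(\Lieh,\Liep,\sigma)_{\rm ab}=0$, the map $(\Liea,\Lieb,\sigma)\to(\Lieh,\Liep,\sigma)/[(\Lieh,\Liep,\sigma),(\Lieh,\Liep,\sigma)]$ is zero, so Proposition \ref{4.1}(i), in the implication (c)$\Rightarrow$(a), shows $(e)$ is a stem extension. Feeding $\mathcal{M}(\Lieh,\Liep,\sigma)=0$ and $(\Lieh,\Liep,\sigma)_{\rm ab}=0$ into the reduced sequence leaves the short exact sequence $0\to\mathcal{M}(\Lien,\Lieq,\delta)\xrightarrow{\theta_{\ast}(e)}(\Liea,\Lieb,\sigma)\to 0$, so $\theta_{\ast}(e)$ is an isomorphism; in particular $(\Liea,\Lieb,\sigma)\cong\mathcal{M}(\Lien,\Lieq,\delta)$ and $(e)$ is a stem cover. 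This half is pure bookkeeping on the sequence.

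For the \emph{only if} direction, assume $(e)$ is a stem cover. It is then a stem extension, and since $(\Lien,\Lieq,\delta)$ is perfect, Proposition \ref{4.1}(i), implication (a)$\Rightarrow$(d), gives $(\Lieh,\Liep,\sigma)_{\rm ab}\cong(\Lien,\Lieq,\delta)_{\rm ab}=0$, so $(\Lieh,\Liep,\sigma)$ is perfect. That settles the first half of the claim. The remaining point, $\mathcal{M}(\Lieh,\Liep,\sigma)=0$, is where I expect the real difficulty: the reduced sequence is not by itself enough, because knowing only that $\theta_{\ast}(e)$ is onto and that $(\Liea,\Lieb,\sigma)\cong\mathcal{M}(\Lien,\Lieq,\delta)$ exhibits $\mathcal{M}(\Lieh,\Liep,\sigma)$ merely as a quotient, through $\rho$, of the abelian crossed module $(I,\Lieb\curlywedge\Liep,\sigma\curlywedge\Id)$, and does not force it to vanish (absent a finiteness hypothesis one cannot conclude $\ker\theta_{\ast}(e)=0$ from surjectivity alone). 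This is the main obstacle.

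To clear it I would run the classical ``superperfectness'' argument, using that a stem cover of a perfect module behaves as a universal central extension. Concretely, I would invoke the existence of a stem cover $0\to\mathcal{M}(\Lieh,\Liep,\sigma)\to(\hat{\Lieh},\hat{\Liep},\hat{\sigma})\xrightarrow{\pi}(\Lieh,\Liep,\sigma)\to 0$ of the perfect module $(\Lieh,\Liep,\sigma)$, whose total object is again perfect; composing $\pi$ with $\varphi$ presents $(\hat{\Lieh},\hat{\Liep},\hat{\sigma})$ as a central extension of $(\Lien,\Lieq,\delta)$ with perfect total object. The universal property of the stem cover $(e)$ of the perfect $(\Lien,\Lieq,\delta)$ then produces a morphism $u\colon(\Lieh,\Liep,\sigma)\to(\hat{\Lieh},\hat{\Liep},\hat{\sigma})$ over $(\Lien,\Lieq,\delta)$, and by uniqueness of morphisms out of $(\Lieh,\Liep,\sigma)$ the composite $\pi\circ u$ must be the identity, so $u$ splits $\pi$. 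A split central extension with perfect total object has abelian, hence perfect, hence trivial kernel, giving $\mathcal{M}(\Lieh,\Liep,\sigma)=0$. The genuinely load-bearing inputs I must supply are thus the universal property of stem covers of perfect crossed modules and the fact that a composite of central extensions of perfect modules is central; granting these, the conclusion falls out, and everything else is the exact-sequence manipulation recorded above.
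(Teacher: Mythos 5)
Your \emph{if} half and your proof that a stem cover forces $(\Lieh,\Liep,\sigma)_{\rm ab}=0$ coincide with the paper's argument, but the second half of the \emph{only if} direction has a genuine gap. You reduce $\mathcal{M}(\Lieh,\Liep,\sigma)=0$ to two facts you never prove: that the stem cover $(e)$ of a perfect Leibniz crossed module enjoys the universal property of a universal central extension (a morphism over $(\Lien,\Lieq,\delta)$ into any central extension exists and is unique), and that composing the stem cover of $(\Lieh,\Liep,\sigma)$ with $\varphi$ again yields a central extension. Neither statement appears anywhere in the paper --- the closest it comes is Remark \ref{rem4.8}, uniqueness of stem covers of perfect modules up to isomorphism, which is weaker than and does not imply the universal mapping property --- and neither is routine: in the classical theory the universal property of the cover of a perfect object is obtained either by an explicit construction of the universal central extension or by first proving the cover is superperfect, i.e.\ precisely the vanishing $\mathcal{M}(\Lieh,\Liep,\sigma)=0$ that you are trying to deduce from it. As written, your argument defers the entire difficulty to inputs at least as strong as the corollary itself, with a real risk of circularity.

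The irony is that the obstacle you flag --- that the sequence (\ref{six term}) is not enough because $(I,\Lieb\curlywedge\Liep,\sigma\curlywedge\Id)$ is not known to vanish --- is exactly what the paper proves \emph{does} vanish, and this is its key step. Once $(\Lieh,\Liep,\sigma)$ is perfect one has $\Liep_{\rm ab}=0$; since $\Lieb$ is central, the mutual actions of $\Lieb$ and $\Liep$ are trivial, so by \cite[Proposition 4.2]{Gnedbaye} one gets $\Lieb\star\Liep\cong\Lieb\otimes\Liep_{\rm ab}\oplus\Liep_{\rm ab}\otimes\Lieb=0$, hence $\Lieb\curlywedge\Liep=0$; and each generator $b\curlywedge h$ of $I$ dies because $h$ can be written in terms of elements ${}^{p_0}h_0$, and relations (3c) and (4a) of Definition \ref{non abelian} give $b\curlywedge{}^{p_0}h_0=[b,p_0]\curlywedge h_0-{}^{b}h_0\curlywedge p_0=0$. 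With the first term of (\ref{six term}) killed, the map $\mathcal{M}(\Lieh,\Liep,\sigma)\longrightarrow\mathcal{M}(\Lien,\Lieq,\delta)$ is injective, and Proposition \ref{4.1} (ii)(c) (a stem cover makes this map zero) finishes the proof that $\mathcal{M}(\Lieh,\Liep,\sigma)=0$. Note that the paper pays for this last step with the finite-dimensionality hypothesis built into Proposition \ref{4.1}(ii); your instinct that surjectivity of $\theta_{\ast}(e)$ plus an abstract isomorphism $(\Liea,\Lieb,\sigma)\cong\mathcal{M}(\Lien,\Lieq,\delta)$ does not yield injectivity in infinite dimension is sound, but the repair you propose is not available without a substantial theory you would first have to build.
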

\begin{proof}
  According to Proposition \ref{4.1} {\it (i) (d)}, $(\Lieh,\Liep,\sigma)$ is a perfect crossed module. Hence $(\Lieh,\Liep,\sigma)_{\rm ab} = 0$.
  So we have $\Liep=[\Liep,\Liep]$ and $\Lieh=D_{\Liep}(\Lieh)$.

  We claim that the crossed module $(I,\Lieb \curlywedge \Liep,\sigma \curlywedge \Id)$ is trivial. Indeed,  thanks to \cite[Proposition 4.2]{Gnedbaye}, $\Lieb \star \Liep =\Lieb \otimes \Liep_{ab} \oplus \Liep_{ab}\otimes\Lieb=0$ and so $\Lieb \curlywedge \Liep=0$. Now, let $b \curlywedge h \in I$, then we can assume $h={^{p_0}}h_0$, for some $h_0 \in \Lieh, p_0\in \Liep$, then we have
  $$b\curlywedge h=b \curlywedge {^{p_0}}{h_0}=-b \curlywedge h_0^{p_0}=[b,p_0]\curlywedge h_0 -{^b}h_0 \curlywedge p_0=0,$$
 	by relations (4a) and (3c) in Definition \ref{3.1}. Similar computations can be done with the other generators of $I$. Thus, we can conclude that $I$ is trivial.
 	Then ${\cal M}(\Lieh,\Liep,\sigma)=0$ from sequence (\ref{six term}) and Theorem \ref{4.1} {\it (ii) (c)}.

 The converse is immediately followed from sequence (\ref{six term}) and Theorem \ref{4.1}.
\end{proof}

 The following proposition plays a basic role in the proofs of most of the subsequent results.
 \begin{proposition} \label{prop 4.2}
   Let $0 \longrightarrow (\mathfrak{u},\Lier,\mu) \longrightarrow (\mathfrak{m},\Lief,\mu) \stackrel{\pi=(\pi_1,\pi_2)}\longrightarrow (\Lien,\Lieq,\delta) \longrightarrow 0$ be a projective presentation of $(\Lien,\Lieq,\delta)$, then
   \begin{enumerate}
   \item[(i)] The following exact sequence is split
   $$0 \longrightarrow {\cal M}(\Lien,\Lieq,\delta) \longrightarrow (\bar{\mathfrak{u}},\bar{\Lier},\bar{\mu})\longrightarrow \frac{(\mathfrak{u},\Lier,\mu)}{(\mathfrak{u},\Lier,\mu)\cap [(\mathfrak{m},\Lief,\mu),(\mathfrak{m},\Lief,\mu)]} \longrightarrow 0,$$
   where $(\bar{\mathfrak{u}},\bar{\Lier},\bar{\mu}) = \frac{(\mathfrak{u},\Lier,\mu)}{[(\mathfrak{u},\Lier,\mu), (\mathfrak{m},\Lief,\mu)]}$.
   \item[(ii)]If $0 \longrightarrow (\Liea,\Lieb,\sigma) \longrightarrow (\Lieh,\Liep,\sigma) \stackrel {\gamma=(\gamma_1,\gamma_2)} \longrightarrow (\Lien_1,\Lieq_1,\delta_1) \longrightarrow 0$ is a stem extension of another Leibniz crossed module $(\Lien_1,\Lieq_1,\delta_1)$
     and $\alpha =(\alpha_1, \alpha_2) : (\Lien, \Lieq, \delta)\longrightarrow (\Lien_1,\Lieq_1, \delta_1)$ is a homomorphism of Leibniz crossed modules, then there exists a homomorphism $\overline{\beta} = (\overline{\beta_1}, \overline{\beta_2}) : (\bar{\mathfrak{m}},\bar{\Lief},\bar\mu)\longrightarrow
(\Lieh,\Liep,\sigma)$ such that $\bar\beta({\cal M}(\Lien,\Lieq,\delta)) \subseteq \bar\beta(\bar{\mathfrak{u}},\bar{\Lier},\bar{\mu})\subseteq (\Liea,\Lieb,\sigma)$, and the following diagram is commutative
 \[ \xymatrix{
0 \ar[r] & (\overline{\mathfrak{u}},\overline{\Lier},\overline{\mu}) \ar[r] \ar[d]& (\overline{\mathfrak{m}},\overline{\Lief},\overline{\mu}) \ar[r]^{\overline{\pi}} \ar[d]^{\overline{\beta}} & (\Lien,\Lieq,\delta) \ar[r] \ar[d]^{\alpha} & 0\\
0 \ar[r] & (\Liea,\Lieb,\sigma) \ar[r] &(\Lieh,\Liep,\sigma) \ar[r]^{\gamma} & (\Lien_1,\Lieq_1,\delta_1) \ar[r] & 0.
 } \]
 where $(\overline{\mathfrak{m}},\overline{\Lief},\overline{\mu})=\frac{(\Liem,\Lief,\mu)}{[(\mathfrak{u},\Lier,\mu), (\mathfrak{m},\Lief,\mu)]}$. Furthermore, if $\alpha$ is surjective, then so is $\bar{\beta}$, and $\bar{\beta} ({\cal M}(\Lien,\Lieq, \delta)) = (\Liea,\Lieb,\sigma)$.
  \end{enumerate}
 \end{proposition}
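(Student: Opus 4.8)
The plan is first to see that the sequence is exact for formal reasons and then to extract the splitting from projectivity in \textbf{AbXmod}. Since $(\Lieu,\Lier,\mu)$ is a crossed ideal of $(\Liem,\Lief,\mu)$ one has $[(\Lieu,\Lier,\mu),(\Liem,\Lief,\mu)] \subseteq (\Lieu,\Lier,\mu) \cap [(\Liem,\Lief,\mu),(\Liem,\Lief,\mu)]$, so the inclusion $\mathcal{M}(\Lien,\Lieq,\delta) \hookrightarrow (\bar{\Lieu},\bar{\Lier},\bar\mu)$ and the quotient map onto $(\Lieu,\Lier,\mu)/\big((\Lieu,\Lier,\mu)\cap[(\Liem,\Lief,\mu),(\Liem,\Lief,\mu)]\big)$ are well defined, and exactness at the middle term is a direct check. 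Next I would observe that the whole sequence lives in \textbf{AbXmod}: because $[(\bar{\Lieu},\bar{\Lier},\bar\mu),(\bar{\Liem},\bar{\Lief},\bar\mu)] = 0$, the crossed ideal $(\bar{\Lieu},\bar{\Lier},\bar\mu)$ lies in the centre of $(\bar{\Liem},\bar{\Lief},\bar\mu)$ and is therefore abelian. Finally, the cokernel is isomorphic to $\big((\Lieu,\Lier,\mu)+[(\Liem,\Lief,\mu),(\Liem,\Lief,\mu)]\big)/[(\Liem,\Lief,\mu),(\Liem,\Lief,\mu)]$, a crossed submodule of $(\Liem,\Lief,\mu)_{\rm ab}$, which is projective in \textbf{AbXmod} by Lemma \ref{projective} {\it (ii)(a)}. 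A short exact sequence in \textbf{AbXmod} with projective cokernel splits (lift the identity of the cokernel along the surjection), which gives the claim.

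\textbf{Part (ii), construction of $\overline\beta$.} I would build $\overline\beta$ by lifting and then descending. Since $(\Liem,\Lief,\mu)$ is projective and $\gamma$ is a regular epimorphism, the composite $\alpha\circ\pi$ lifts through $\gamma$ to a homomorphism $\beta : (\Liem,\Lief,\mu) \longrightarrow (\Lieh,\Liep,\sigma)$ with $\gamma\circ\beta = \alpha\circ\pi$. Restricting to $(\Lieu,\Lier,\mu) = \Ker\pi$ gives $\gamma\circ\beta|_{(\Lieu,\Lier,\mu)} = \alpha\circ\pi|_{(\Lieu,\Lier,\mu)} = 0$, so $\beta(\Lieu,\Lier,\mu) \subseteq \Ker\gamma = (\Liea,\Lieb,\sigma)$. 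Because $(e)$ is a stem, hence central, extension, $(\Liea,\Lieb,\sigma)$ is central in $(\Lieh,\Liep,\sigma)$, whence $\beta\big([(\Lieu,\Lier,\mu),(\Liem,\Lief,\mu)]\big) \subseteq [(\Liea,\Lieb,\sigma),(\Lieh,\Liep,\sigma)] = 0$; thus $\beta$ factors as $\overline\beta : (\bar{\Liem},\bar{\Lief},\bar\mu) \to (\Lieh,\Liep,\sigma)$. The right-hand square commutes by construction, and since $\overline\beta(\bar{\Lieu},\bar{\Lier},\bar\mu) \subseteq (\Liea,\Lieb,\sigma)$ I read off the left square and the chain $\overline\beta(\mathcal{M}(\Lien,\Lieq,\delta)) \subseteq \overline\beta(\bar{\Lieu},\bar{\Lier},\bar\mu) \subseteq (\Liea,\Lieb,\sigma)$.

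\textbf{Part (ii), surjectivity of $\overline\beta$.} Assuming $\alpha$ onto, from $\gamma\overline\beta = \alpha\overline\pi$ with $\alpha,\overline\pi$ surjective I get that $\gamma\overline\beta$ is onto, so $(\Lieh,\Liep,\sigma) = P + (\Liea,\Lieb,\sigma)$ where $P := \overline\beta(\bar{\Liem},\bar{\Lief},\bar\mu)$ is a crossed submodule. Centrality of $(\Liea,\Lieb,\sigma)$ then gives $[(\Lieh,\Liep,\sigma),(\Lieh,\Liep,\sigma)] = [P,P] \subseteq P$, and the stem relation $(\Liea,\Lieb,\sigma) \subseteq [(\Lieh,\Liep,\sigma),(\Lieh,\Liep,\sigma)]$ forces $(\Liea,\Lieb,\sigma) \subseteq P$, whence $(\Lieh,\Liep,\sigma) = P$ and $\overline\beta$ is surjective.

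\textbf{Part (ii), the image equality --- the main obstacle.} It remains to prove $\overline\beta(\mathcal{M}(\Lien,\Lieq,\delta)) = (\Liea,\Lieb,\sigma)$, and this is the delicate point. Writing $D := [(\bar{\Liem},\bar{\Lief},\bar\mu),(\bar{\Liem},\bar{\Lief},\bar\mu)]$, the definition of the Schur multiplier gives $\mathcal{M}(\Lien,\Lieq,\delta) = (\bar{\Lieu},\bar{\Lier},\bar\mu) \cap D$, and surjectivity of $\overline\beta$ yields $\overline\beta(D) = [(\Lieh,\Liep,\sigma),(\Lieh,\Liep,\sigma)] \supseteq (\Liea,\Lieb,\sigma)$ by the stem relation. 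Setting $L := \overline\pi^{-1}(\Ker\alpha)$ one has $\overline\beta(L) = (\Liea,\Lieb,\sigma)$, and from the stem relation together with centrality of $(\Liea,\Lieb,\sigma)$ I would derive the decomposition $L = (L \cap D) + \Ker\overline\beta$, giving $\overline\beta(L \cap D) = (\Liea,\Lieb,\sigma)$. The remaining --- and main --- difficulty is to pass from $L \cap D$ to $(\bar{\Lieu},\bar{\Lier},\bar\mu) \cap D = \mathcal{M}(\Lien,\Lieq,\delta)$: a bracket-preimage of $a \in (\Liea,\Lieb,\sigma)$ need only have $\overline\pi$-image in $\Ker\alpha \cap [(\Lien,\Lieq,\delta),(\Lien,\Lieq,\delta)]$, and one must show this discrepancy is absorbed after applying $\overline\beta$, correcting the preimage by an element of $\Ker\overline\beta$ so that it lands in $(\bar{\Lieu},\bar{\Lier},\bar\mu)$. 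This correction is the crux; it is immediate when $\alpha$ is an isomorphism, since then $\Ker\alpha = 0$ and $L = (\bar{\Lieu},\bar{\Lier},\bar\mu)$.
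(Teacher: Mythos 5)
Your part \emph{(i)} is correct and is precisely the argument the paper leaves implicit: the paper's own proof of \emph{(i)} is the single phrase ``By the Isomorphism Theorem'', which only accounts for exactness, while the splitting must come --- exactly as you argue --- from the fact that the cokernel is isomorphic to a crossed submodule of $(\Liem,\Lief,\mu)_{\rm ab}$ and is therefore projective in {\bf AbXmod} by Lemma~\ref{projective}~\emph{(ii)(a)}; that lemma is in the paper precisely for this purpose. Similarly, your construction of $\overline{\beta}$ (lift $\alpha\circ\pi$ through $\gamma$ by projectivity, then factor out $[(\Lieu,\Lier,\mu),(\Liem,\Lief,\mu)]$ using centrality of $(\Liea,\Lieb,\sigma)$) and your proof that $\overline{\beta}$ is onto when $\alpha$ is onto are exactly the ``straightforward adaptation of Lemma 3.3 of \cite{MSS}'' to which the paper's proof of \emph{(ii)} reduces. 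Up to the last clause you have therefore reproduced the intended proof in full detail.

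The step you leave open --- $\overline{\beta}\bigl({\cal M}(\Lien,\Lieq,\delta)\bigr)=(\Liea,\Lieb,\sigma)$ for an arbitrary surjective $\alpha$ --- is a gap in your write-up, but your suspicion is well founded: no correction argument can close it, because the equality is false in this generality. Concretely, take $(\Lien,\Lieq,\delta)$ itself projective, with projective presentation $0\to 0\to(\Lien,\Lieq,\delta)\stackrel{\Id}\to(\Lien,\Lieq,\delta)\to 0$, so that ${\cal M}(\Lien,\Lieq,\delta)=0$ and $(\bar{\Liem},\bar{\Lief},\bar{\mu})=(\Lien,\Lieq,\delta)$; let $\alpha$ be a surjection onto any crossed module $(\Lien_1,\Lieq_1,\delta_1)$ with ${\cal M}(\Lien_1,\Lieq_1,\delta_1)\neq 0$, and let $(e)$ be a stem cover of $(\Lien_1,\Lieq_1,\delta_1)$, which exists by the paper's results in Section~\ref{stem}. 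Any lift $\overline{\beta}$ is then surjective (by your own argument), yet $\overline{\beta}({\cal M}(\Lien,\Lieq,\delta))=0\neq(\Liea,\Lieb,\sigma)$. The equality does hold, by the argument you call immediate, when $\alpha$ is an isomorphism: then $d\in[(\bar{\Liem},\bar{\Lief},\bar{\mu}),(\bar{\Liem},\bar{\Lief},\bar{\mu})]$ with $\gamma\overline{\beta}(d)=0$ forces $\overline{\pi}(d)=0$, hence $d\in(\bar{\Lieu},\bar{\Lier},\bar{\mu})\cap[(\bar{\Liem},\bar{\Lief},\bar{\mu}),(\bar{\Liem},\bar{\Lief},\bar{\mu})]={\cal M}(\Lien,\Lieq,\delta)$, giving $(\Liea,\Lieb,\sigma)\subseteq\overline{\beta}({\cal M}(\Lien,\Lieq,\delta))$ and so equality. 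That is in fact the only case the paper ever uses: Theorem~\ref{Th 4.1}~\emph{(ii)}, Corollary~\ref{iso} and Theorem~\ref{yamazaki} all invoke the ``furthermore'' clause with $\alpha=\Id$. So the defect lies in the statement (and in its source), not in your method: restrict the final clause to $\alpha$ an isomorphism and your proof is complete.
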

 \begin{proof}
{\it (i)} By the Isomorphism Theorem.

{\it (ii)} It is a straightforward adaptation of the proof of Lemma 3.3 in \cite{MSS}.
 \end{proof}

In the following, we determine the structure of stem covers of Leibniz crossed module which is analogous to the similar works in group and in Lie crossed modules \cite{RS,MSS}.

\begin{theorem} \label{Th 4.1}
  Let  $0 \longrightarrow (\mathfrak{u},\Lier,\mu) \longrightarrow (\mathfrak{m},\Lief,\mu) \longrightarrow (\Lien,\Lieq,\delta) \longrightarrow 0$ be a free presentation of a Leibniz crossed module $(\Lien,\Lieq,\delta)$. Then
  \begin{enumerate}
  \item[(i)] If $(\bar{\mathfrak{u}},\bar{\Lier},\bar{\mu}) \cong {\cal M}(\Lien,\Lieq,\delta)\oplus (\bar{\Liet},\bar{\Lies},\bar{\mu})$ for some  crossed ideal $\mathfrak{(t,s,\mu)}$ of $\mathfrak{(m,f,\mu)}$, where $(\bar{\Liet},\bar{\Lies},\bar{\mu})$ denotes the quotient $\frac{(\Liet, \Lies, \mu)}{[(\Lieu,\Lier,\mu),(\Liem, \Lief, \mu)]}$, then the extension $(e): 0 \longrightarrow \mathfrak{(u/t,r/s,\bar{\mu}) \longrightarrow
   (m/t,f/s,\bar{\mu}) \longrightarrow (n,q,\delta)} \longrightarrow 0$ is a stem cover of $\mathfrak{(n,q,\delta)}$.
  \item[(ii)] If ${\cal M}(\mathfrak{n,q,\delta})$ is finite dimensional and $(e_1): 0 \longrightarrow \mathfrak{(a,b,\sigma)
  \longrightarrow (h,p,\sigma) \longrightarrow }$ $\mathfrak{ (n,q,\delta)} \longrightarrow 0$ is a stem cover of $\mathfrak{(n,q,\delta)}$,
   then there is a  crossed ideal $\mathfrak{(t,s,\mu)}$ of $\mathfrak{(m,f,\mu)}$  satisfying statement {\it (i)} and such that $\mathfrak{(h,p,\sigma)} \cong \mathfrak{(m/t,f/s,\bar{\mu})}$  and $\mathfrak{(a,b,\sigma)} \cong \mathfrak{(u/t,r/s,\bar{\mu})}$.
   \end{enumerate}
\end{theorem}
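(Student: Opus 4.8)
The plan is to prove the two parts separately, using Proposition~\ref{prop 4.2} as the main engine together with the defining description $\mathcal{M}(\Lien,\Lieq,\delta)=\frac{(\Lieu,\Lier,\mu)\cap[(\Liem,\Lief,\mu),(\Liem,\Lief,\mu)]}{[(\Lieu,\Lier,\mu),(\Liem,\Lief,\mu)]}$ and the split sequence of Proposition~\ref{prop 4.2}~{\it (i)}. For part {\it (i)} I would first record that the hypothesis forces the inclusions $[(\Lieu,\Lier,\mu),(\Liem,\Lief,\mu)]\subseteq(\Liet,\Lies,\mu)\subseteq(\Lieu,\Lier,\mu)$, since $(\bar{\Liet},\bar{\Lies},\bar{\mu})$ is by definition a subobject of $(\bar{\mathfrak{u}},\bar{\Lier},\bar{\mu})=(\Lieu,\Lier,\mu)/[(\Lieu,\Lier,\mu),(\Liem,\Lief,\mu)]$. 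These inclusions make the extension $(e)$ well defined, and because the kernel $(\Lieu/\Liet,\Lier/\Lies,\bar{\mu})$ already lives in the quotient by the commutator $[(\Lieu,\Lier,\mu),(\Liem,\Lief,\mu)]$, the extension is central. To identify the kernel I would compute $(\Lieu/\Liet,\Lier/\Lies,\bar{\mu})\cong(\bar{\mathfrak{u}},\bar{\Lier},\bar{\mu})/(\bar{\Liet},\bar{\Lies},\bar{\mu})$ and feed in the assumed decomposition $(\bar{\mathfrak{u}},\bar{\Lier},\bar{\mu})\cong\mathcal{M}(\Lien,\Lieq,\delta)\oplus(\bar{\Liet},\bar{\Lies},\bar{\mu})$, which collapses the quotient to $\mathcal{M}(\Lien,\Lieq,\delta)$.

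It then remains to check that $(e)$ is a stem extension, i.e. that this kernel lies in the commutator of the middle term. The key step is the identity
\[
(\Lieu,\Lier,\mu)=(\Liet,\Lies,\mu)+\bigl((\Lieu,\Lier,\mu)\cap[(\Liem,\Lief,\mu),(\Liem,\Lief,\mu)]\bigr),
\]
which I would derive from the direct sum decomposition together with $[(\Lieu,\Lier,\mu),(\Liem,\Lief,\mu)]\subseteq(\Liet,\Lies,\mu)$: in $(\bar{\mathfrak{u}},\bar{\Lier},\bar{\mu})$ the complement of $(\bar{\Liet},\bar{\Lies},\bar{\mu})$ is precisely the image of $(\Lieu,\Lier,\mu)\cap[(\Liem,\Lief,\mu),(\Liem,\Lief,\mu)]$ (this is the copy of $\mathcal{M}$ from Proposition~\ref{prop 4.2}~{\it (i)}). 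Passing to $(\Liem/\Liet,\Lief/\Lies,\bar{\mu})$, where $(\Liet,\Lies,\mu)$ maps to $0$, this identity shows that the image of $(\Lieu,\Lier,\mu)$ equals the image of the intersection, which is contained in the image of $[(\Liem,\Lief,\mu),(\Liem,\Lief,\mu)]$, i.e. in the commutator crossed submodule of the middle term. Hence $(e)$ is a stem extension with kernel $\cong\mathcal{M}(\Lien,\Lieq,\delta)$, that is, a stem cover.

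For part {\it (ii)} I would apply Proposition~\ref{prop 4.2}~{\it (ii)} to the given stem cover $(e_1)$ with $\alpha=\Id\colon(\Lien,\Lieq,\delta)\to(\Lien,\Lieq,\delta)$, obtaining a surjective $\bar{\beta}\colon(\bar{\mathfrak{m}},\bar{\Lief},\bar{\mu})\to(\Lieh,\Liep,\sigma)$ whose restriction to $(\bar{\mathfrak{u}},\bar{\Lier},\bar{\mu})$ surjects onto $(\Liea,\Lieb,\sigma)$ and satisfies $\bar{\beta}(\mathcal{M}(\Lien,\Lieq,\delta))=(\Liea,\Lieb,\sigma)$. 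Since $(e_1)$ is a stem cover, $(\Liea,\Lieb,\sigma)\cong\mathcal{M}(\Lien,\Lieq,\delta)$ is finite dimensional, so the restriction $\bar{\beta}|_{\mathcal{M}}\colon\mathcal{M}(\Lien,\Lieq,\delta)\to(\Liea,\Lieb,\sigma)$ is a surjection between finite-dimensional crossed modules of equal dimension in each component, hence an isomorphism. I would then define $(\Liet,\Lies,\mu)$ to be the preimage in $(\Lieu,\Lier,\mu)$ of $(\bar{\Liet},\bar{\Lies},\bar{\mu}):=\Ker\bigl(\bar{\beta}|_{(\bar{\mathfrak{u}},\bar{\Lier},\bar{\mu})}\bigr)$.

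Finally I would verify the three assertions. The decomposition $(\bar{\mathfrak{u}},\bar{\Lier},\bar{\mu})\cong\mathcal{M}(\Lien,\Lieq,\delta)\oplus(\bar{\Liet},\bar{\Lies},\bar{\mu})$ follows because $\bar{\beta}|_{\mathcal{M}}$ being an isomorphism yields $\mathcal{M}(\Lien,\Lieq,\delta)\cap(\bar{\Liet},\bar{\Lies},\bar{\mu})=0$ and, by surjectivity of $\bar{\beta}|_{(\bar{\mathfrak{u}},\bar{\Lier},\bar{\mu})}$, also $\mathcal{M}(\Lien,\Lieq,\delta)+(\bar{\Liet},\bar{\Lies},\bar{\mu})=(\bar{\mathfrak{u}},\bar{\Lier},\bar{\mu})$; this is exactly the hypothesis of {\it (i)}. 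The isomorphism $(\Liea,\Lieb,\sigma)\cong(\Lieu/\Liet,\Lier/\Lies,\bar{\mu})$ is the first isomorphism theorem applied to $\bar{\beta}|_{(\bar{\mathfrak{u}},\bar{\Lier},\bar{\mu})}$, and $(\Lieh,\Liep,\sigma)\cong(\Liem/\Liet,\Lief/\Lies,\bar{\mu})$ follows by a short diagram chase in the square of Proposition~\ref{prop 4.2}~{\it (ii)}: since $\alpha=\Id$, any element of $\Ker\bar{\beta}$ projects to $0$ in $(\Lien,\Lieq,\delta)$, hence lies in $(\bar{\mathfrak{u}},\bar{\Lier},\bar{\mu})$ and therefore in $(\bar{\Liet},\bar{\Lies},\bar{\mu})$, giving $\Ker\bar{\beta}=(\bar{\Liet},\bar{\Lies},\bar{\mu})$ and the claimed quotient. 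I expect the main obstacles to be two bookkeeping points: checking that $(\Liet,\Lies,\mu)$ is genuinely a crossed ideal of $(\Liem,\Lief,\mu)$, which I would obtain from the fact that $(\bar{\mathfrak{u}},\bar{\Lier},\bar{\mu})$ is central in $(\bar{\mathfrak{m}},\bar{\Lief},\bar{\mu})$ (so every subobject of it is automatically a crossed ideal of $(\bar{\mathfrak{m}},\bar{\Lief},\bar{\mu})$), and making sure that finite dimensionality is used precisely to promote the surjection $\bar{\beta}|_{\mathcal{M}}$ to an isomorphism.
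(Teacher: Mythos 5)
Your proposal is correct and follows essentially the same route as the paper: part \emph{(i)} via the identity $(\mathfrak{u},\Lier,\mu)=(\Liet,\Lies,\mu)+\bigl((\mathfrak{u},\Lier,\mu)\cap[(\mathfrak{m},\Lief,\mu),(\mathfrak{m},\Lief,\mu)]\bigr)$ forcing the kernel of $(e)$ into the commutator (and center) of the quotient, and part \emph{(ii)} via Proposition~\ref{prop 4.2}~\emph{(ii)} with $\alpha=\Id$, using finite dimensionality of ${\cal M}(\Lien,\Lieq,\delta)$ to upgrade $\bar{\beta}_{\mid}$ to an isomorphism and taking $(\bar{\Liet},\bar{\Lies},\bar{\mu})=\Ker(\bar{\beta})$ as the complement. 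Your write-up is merely more explicit than the paper's about the bookkeeping (centrality of the extension, the crossed-ideal property of $(\Liet,\Lies,\mu)$, and the diagram chase showing $\Ker(\bar{\beta})\subseteq(\bar{\mathfrak{u}},\bar{\Lier},\bar{\mu})$), all of which the paper leaves implicit.
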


\begin{proof}
{\it (i)} By the assumption, we have $\mathfrak{(u/t, r/s,\bar{\mu})}\cong {\cal M}(\Lien,\Lieq,\delta)$ and $(\mathfrak{u},\Lier,\mu)=(\mathfrak{u},\Lier,\mu)\cap [(\mathfrak{m},\Lief,\mu),(\mathfrak{m},\Lief,\mu)]+(\Liet,\Lies,\mu)\subseteq [(\mathfrak{m},\Lief,\mu),(\mathfrak{m},\Lief,\mu)]+(\Liet,\Lies,\mu)$, then
\[ \begin{array}{ll}
\mathfrak{(u/t,r/s,\bar{\mu})} &
\subseteq \frac{[(\mathfrak{m},\Lief,\mu),(\mathfrak{m},\Lief,\mu)]+(\Liet,\Lies,\mu)}{(\Liet,\Lies,\mu)}\cap Z\left( \frac{(\mathfrak{m},\Lief,\mu)}{(\mathfrak{t},\Lies,\mu)} \right)\\
&\subseteq[(\mathfrak{m/t,f/s},\bar{\mu}), (\mathfrak{m/t,f/s},\bar{\mu})]\cap Z \left( \frac{(\mathfrak{m},\Lief,\mu)}{(\mathfrak{t},\Lies,\mu)} \right),
\end{array} \]
so $(e)$ is stem cover of $(\Lien,\Lieq,\delta)$.

{\it (ii)} According to Proposition \ref{prop 4.2} {\it (ii)}, there is a surjective homomorphism $\bar{\beta} = (\bar{\beta}_1,\bar{\beta}_2) :
 (\bar{\mathfrak{m}},\bar{f},\bar{\mu})\longrightarrow ({\Lieh}, \Liep,\sigma)$ such that $\bar{\beta}({\cal M}\mathfrak{(n,q,\delta)})=\bar{\beta}(\mathfrak{\bar{u},\bar{r},\bar{\mu}})=
  \mathfrak{(a,b,\sigma)}$. Setting $\Ker( \bar{\beta})= \mathfrak{(\bar{t},\bar{s},\bar{\mu})}$,
   we have $\mathfrak{(h,p,\sigma)\cong (m,f,\mu)/(t,s,\mu)}$ and $\mathfrak{(a,b,\sigma)} \cong$
$\mathfrak{(u,r,\mu)/(t,s,\mu)}$. Also the restriction of $\bar{\beta}_{\mid}$ from
     ${\cal M}(\mathfrak{n,q,\delta})$ to $\mathfrak{(a,b,\sigma)}$ is surjective and since
     ${\cal M}(\mathfrak{n,q,\delta})$ is finite dimensional, so $\bar{\beta}_{\mid}$ is an isomorphism,
      therefore ${\cal M}(\mathfrak{n,q,\delta})\cap \Ker (\bar{\beta}) = \Ker (\bar{\beta}_{\mid})=0$.
   As the kernel of the restriction of $\bar{\beta}$ to $\mathfrak{(\bar{u},\bar{r},\bar{\mu})}$
    is $\Ker (\bar{\beta})$ and the image of this restriction is $\mathfrak{(a,b,\sigma)}$, so  the result follows.
\end{proof}

Thanks to above Theorem \ref{Th 4.1} and Proposition \ref{prop 4.2}, we provide the following important consequence.

\begin{corollary}
  Any Leibniz crossed module $(\Lien,\Lieq,\delta)$ admits at least one stem cover.

   In particular any Leibniz algebra admits at least one stem cover.
\end{corollary}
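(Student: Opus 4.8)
The plan is to combine the splitting furnished by Proposition \ref{prop 4.2} {\it (i)} with the structural description of stem covers in Theorem \ref{Th 4.1} {\it (i)}. First I would fix a free presentation $0 \to (\Lieu,\Lier,\mu) \to (\Liem,\Lief,\mu) \to (\Lien,\Lieq,\delta) \to 0$, which exists by Theorem \ref{enough projectives}. Proposition \ref{prop 4.2} {\it (i)} then yields the split short exact sequence
\[
0 \to {\cal M}(\Lien,\Lieq,\delta) \to (\bar{\Lieu},\bar{\Lier},\bar{\mu}) \to \frac{(\Lieu,\Lier,\mu)}{(\Lieu,\Lier,\mu)\cap [(\Liem,\Lief,\mu),(\Liem,\Lief,\mu)]} \to 0,
\]
where $(\bar{\Lieu},\bar{\Lier},\bar{\mu}) = (\Lieu,\Lier,\mu)/[(\Lieu,\Lier,\mu),(\Liem,\Lief,\mu)]$. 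In particular, the chosen section identifies a crossed submodule $(\bar{\Liet},\bar{\Lies},\bar{\mu})$ of $(\bar{\Lieu},\bar{\Lier},\bar{\mu})$, namely its image, realizing an internal direct sum $(\bar{\Lieu},\bar{\Lier},\bar{\mu}) \cong {\cal M}(\Lien,\Lieq,\delta) \oplus (\bar{\Liet},\bar{\Lies},\bar{\mu})$ of crossed modules.

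The point that needs care is to promote this abstract direct summand to the precise shape demanded by Theorem \ref{Th 4.1} {\it (i)}, that is, to exhibit a genuine crossed ideal $(\Liet,\Lies,\mu)$ of $(\Liem,\Lief,\mu)$ whose image in $(\bar{\Liem},\bar{\Lief},\bar{\mu}) = (\Liem,\Lief,\mu)/[(\Lieu,\Lier,\mu),(\Liem,\Lief,\mu)]$ is exactly the chosen complement. Here I would use that, by construction, $(\bar{\Lieu},\bar{\Lier},\bar{\mu})$ is central in $(\bar{\Liem},\bar{\Lief},\bar{\mu})$, since the commutator $[(\Lieu,\Lier,\mu),(\Liem,\Lief,\mu)]$ has been divided out. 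Centrality forces the actions of $\bar{\Lief}$ on $\bar{\Lieu}$ and of $\bar{\Lier}$ on $\bar{\Liem}$ to be trivial, so every crossed submodule of $(\bar{\Lieu},\bar{\Lier},\bar{\mu})$ is automatically a crossed ideal of $(\bar{\Liem},\bar{\Lief},\bar{\mu})$; in particular $(\bar{\Liet},\bar{\Lies},\bar{\mu})$ is such. Taking $(\Liet,\Lies,\mu)$ to be its preimage under the projection $(\Liem,\Lief,\mu) \twoheadrightarrow (\bar{\Liem},\bar{\Lief},\bar{\mu})$ then gives a crossed ideal of $(\Liem,\Lief,\mu)$ containing $[(\Lieu,\Lier,\mu),(\Liem,\Lief,\mu)]$ and satisfying $(\Liet,\Lies,\mu)/[(\Lieu,\Lier,\mu),(\Liem,\Lief,\mu)] = (\bar{\Liet},\bar{\Lies},\bar{\mu})$, exactly the hypothesis of Theorem \ref{Th 4.1} {\it (i)}.

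With $(\Liet,\Lies,\mu)$ in hand, Theorem \ref{Th 4.1} {\it (i)} asserts that
\[
(e): 0 \to (\Lieu/\Liet,\Lier/\Lies,\bar{\mu}) \to (\Liem/\Liet,\Lief/\Lies,\bar{\mu}) \to (\Lien,\Lieq,\delta) \to 0
\]
is a stem cover of $(\Lien,\Lieq,\delta)$, which establishes the existence statement. For the final assertion, I would regard an arbitrary Leibniz algebra $\Lieq$ as a Leibniz crossed module in either of the two canonical ways of Example \ref{2.1} {\it (i)}, say $(0,\Lieq,i)$, and apply the result just obtained; as noted at the beginning of this section, the resulting stem cover specializes to a stem cover of $\Lieq$ in the sense of \cite{CL2, EV}. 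The only genuine obstacle is the lifting step of the second paragraph, and it dissolves precisely because of the centrality built into $(\bar{\Liem},\bar{\Lief},\bar{\mu})$ by passing to the quotient modulo $[(\Lieu,\Lier,\mu),(\Liem,\Lief,\mu)]$.
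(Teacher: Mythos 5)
Your proposal is correct and follows essentially the same route as the paper: fix a free presentation, invoke Proposition \ref{prop 4.2} \emph{(i)} to split off $\mathcal{M}(\Lien,\Lieq,\delta)$ as a direct summand of $(\bar{\Lieu},\bar{\Lier},\bar{\mu})$, and then apply Theorem \ref{Th 4.1} \emph{(i)}. Your second paragraph merely fills in a detail the paper leaves implicit (that the complement, being a crossed submodule of a central crossed ideal, lifts via preimage to a genuine crossed ideal $(\Liet,\Lies,\mu)$ of $(\Liem,\Lief,\mu)$), and that centrality argument is sound.
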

\begin{proof}
Let  $0 \longrightarrow (\mathfrak{u},\Lier,\mu) \longrightarrow (\mathfrak{m},\Lief,\mu) \longrightarrow (\Lien,\Lieq,\delta) \longrightarrow 0$ be a free presentation of a Leibniz crossed module $(\Lien,\Lieq,\delta)$. Then by Proposition \ref{prop 4.2} {\it (i)}, there is a crossed ideal $( \Liet,\Lies,\mu)$ of $(\Liem,\Lief,\mu)$ such that
	and
	 $(\bar{\mathfrak{u}},\bar{\Lier},\bar{\mu}) \cong {\cal M}(\Lien,\Lieq,\delta)\oplus (\bar{\Liet},\bar{\Lies},\bar{\mu})$.
	 Now the result follows by Theorem \ref{Th 4.1} {\it (i)}.
\end{proof}

It is  very interesting to find the relations between two stem cover of given Leibniz crossed module. In the following we prove that some crossed submodules and factor crossed modules of covering crossed modules are always isomorphic.

\begin{corollary}\label{iso}
  Let $(\Lien, \Lieq,\delta)$ be a Leibniz crossed module with finite dimensional Schur multiplier and let $(e_i): 0 \longrightarrow  {(\Liea_i,\Lieb_i,\sigma_i) \longrightarrow  (\Lieh_i,\Liep_i,\sigma_i) \stackrel{\varphi_i=(\varphi_{i1},\varphi_{i2})}\longrightarrow  (\Lien,\Lieq,\delta)} \longrightarrow  0$
  be two stem covers of $(\Lien,\Lieq,\delta)$, for $i=1,2$. Then
  \begin{enumerate}
  \item[(i)] $[(\Lieh_1,\Liep_1,\sigma_1), (\Lieh_1,\Liep_1,\sigma_1)]\cong [(\Lieh_2,\Liep_2,\sigma_2), (\Lieh_2,\Liep_2,\sigma_2)]$.
  \item[(ii)] $(\Lieh_1,\Liep_1,\sigma_1)/Z(\Lieh_1,\Liep_1,\sigma_1)\cong (\Lieh_2,\Liep_2,\sigma_2)/Z(\Lieh_2,\Liep_2,\sigma_2)$.
  \item[(iii)] $Z(\Lieh_1,\Liep_1,\sigma_1)/(\Liea_1,\Lieb_1,\sigma_1)\cong Z(\Lieh_2,\Liep_2,\sigma_2)/(\Liea_2,\Lieb_2,\sigma_2)$.
    \end{enumerate}
\end{corollary}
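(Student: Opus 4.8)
The plan is to reduce all three statements to a single common free presentation, exploiting the structure theorem for stem covers proved in Theorem \ref{Th 4.1}. First I would fix a free presentation $0 \to (\Lieu,\Lier,\mu) \to (\Liem,\Lief,\mu) \to (\Lien,\Lieq,\delta) \to 0$ and pass to the quotient $(\overline{\Liem},\overline{\Lief},\overline{\mu}) = (\Liem,\Lief,\mu)/[(\Lieu,\Lier,\mu),(\Liem,\Lief,\mu)]$. Since each $\mathcal{M}(\Lien,\Lieq,\delta)$ is finite dimensional, Theorem \ref{Th 4.1} {\it (ii)} supplies crossed ideals $(\Liet_i,\Lies_i,\mu)$ of $(\Liem,\Lief,\mu)$ with $(\overline{\Lieu},\overline{\Lier},\overline{\mu}) \cong \mathcal{M}(\Lien,\Lieq,\delta) \oplus (\overline{\Liet_i},\overline{\Lies_i},\overline{\mu})$ such that $(\Lieh_i,\Liep_i,\sigma_i) \cong (\overline{\Liem},\overline{\Lief},\overline{\mu})/(\overline{\Liet_i},\overline{\Lies_i},\overline{\mu})$ and $(\Liea_i,\Lieb_i,\sigma_i) \cong (\overline{\Lieu},\overline{\Lier},\overline{\mu})/(\overline{\Liet_i},\overline{\Lies_i},\overline{\mu})$, for $i=1,2$. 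The whole point of dividing by $[(\Lieu,\Lier,\mu),(\Liem,\Lief,\mu)]$ is that $(\overline{\Lieu},\overline{\Lier},\overline{\mu})$, and hence each complement $(\overline{\Liet_i},\overline{\Lies_i},\overline{\mu})$, lies in the center of $(\overline{\Liem},\overline{\Lief},\overline{\mu})$.

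The key lemma I would isolate is that $[(\overline{\Liem},\overline{\Lief},\overline{\mu}),(\overline{\Liem},\overline{\Lief},\overline{\mu})] \cap (\overline{\Liet_i},\overline{\Lies_i},\overline{\mu}) = 0$ for $i=1,2$. Indeed, by the very definition of the Schur multiplier one has $\mathcal{M}(\Lien,\Lieq,\delta) = (\overline{\Lieu},\overline{\Lier},\overline{\mu}) \cap [(\overline{\Liem},\overline{\Lief},\overline{\mu}),(\overline{\Liem},\overline{\Lief},\overline{\mu})]$, and $(\overline{\Liet_i},\overline{\Lies_i},\overline{\mu})$ is a complement to $\mathcal{M}(\Lien,\Lieq,\delta)$ inside $(\overline{\Lieu},\overline{\Lier},\overline{\mu})$, so intersecting with the commutator annihilates everything outside the multiplier summand. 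Granting this, statement {\it (i)} is immediate: since $(\overline{\Liet_i},\overline{\Lies_i},\overline{\mu})$ is central, $[(\Lieh_i,\Liep_i,\sigma_i),(\Lieh_i,\Liep_i,\sigma_i)]$ is the image of the commutator of $(\overline{\Liem},\overline{\Lief},\overline{\mu})$ under the quotient by $(\overline{\Liet_i},\overline{\Lies_i},\overline{\mu})$, and the key lemma says this quotient map is injective on the commutator; hence $[(\Lieh_i,\Liep_i,\sigma_i),(\Lieh_i,\Liep_i,\sigma_i)] \cong [(\overline{\Liem},\overline{\Lief},\overline{\mu}),(\overline{\Liem},\overline{\Lief},\overline{\mu})]$ independently of $i$.

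For statement {\it (ii)} I would compute the center of the quotient. Writing $\overline{M}=(\overline{\Liem},\overline{\Lief},\overline{\mu})$ and $\overline{T}_i=(\overline{\Liet_i},\overline{\Lies_i},\overline{\mu})$, the preimage in $\overline{M}$ of $Z(\overline{M}/\overline{T}_i)$ is $\{x : [x,\overline{M}] \subseteq \overline{T}_i\}$; but $[x,\overline{M}] \subseteq [\overline{M},\overline{M}] \cap \overline{T}_i = 0$ by the key lemma, so this preimage is exactly $Z(\overline{M})$, independent of $i$. Therefore $(\Lieh_i,\Liep_i,\sigma_i)/Z(\Lieh_i,\Liep_i,\sigma_i) \cong \overline{M}/Z(\overline{M})$ for both $i$, which is {\it (ii)}. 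For statement {\it (iii)}, the same computation gives $Z(\Lieh_i,\Liep_i,\sigma_i) \cong Z(\overline{M})/\overline{T}_i$, while $(\Liea_i,\Lieb_i,\sigma_i) \cong (\overline{\Lieu},\overline{\Lier},\overline{\mu})/\overline{T}_i$ with $(\overline{\Lieu},\overline{\Lier},\overline{\mu}) \subseteq Z(\overline{M})$ by centrality; the isomorphism theorem then yields $Z(\Lieh_i,\Liep_i,\sigma_i)/(\Liea_i,\Lieb_i,\sigma_i) \cong Z(\overline{M})/(\overline{\Lieu},\overline{\Lier},\overline{\mu})$, again independent of $i$.

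The main obstacle is verificational rather than conceptual: I must confirm that the elementary facts used freely in the group and Lie-algebra settings — that commutators pass to quotients, that dividing by a central crossed ideal gives $Z(\overline{M}/\overline{T}_i) = \{x : [x,\overline{M}]\subseteq \overline{T}_i\}/\overline{T}_i$, and that the isomorphism theorems apply — remain legitimate in the semi-abelian category $\mathbf{XLb}$ with the commutator and center recalled in Section \ref{prelim}. All of these are furnished by the general semi-abelian machinery, so the argument becomes a faithful transcription of the group- and Lie-crossed-module proofs in \cite{RS,MSS}.
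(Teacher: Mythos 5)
Your proposal is correct and follows essentially the same route as the paper: both pass to $(\overline{\Liem},\overline{\Lief},\overline{\mu})$, realize each stem cover as $(\overline{\Liem},\overline{\Lief},\overline{\mu})/(\overline{\Liet_i},\overline{\Lies_i},\overline{\mu})$ via Proposition \ref{prop 4.2} {\it (ii)}/Theorem \ref{Th 4.1} {\it (ii)}, rest everything on the key fact $(\overline{\Liet_i},\overline{\Lies_i},\overline{\mu}) \cap [(\overline{\Liem},\overline{\Lief},\overline{\mu}),(\overline{\Liem},\overline{\Lief},\overline{\mu})]=0$, and then prove that the preimage of the center of the quotient is exactly $Z(\overline{\Liem},\overline{\Lief},\overline{\mu})$ before applying the isomorphism theorems. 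The only cosmetic difference is that you extract this key fact from the internal direct-sum decomposition supplied by Theorem \ref{Th 4.1} {\it (ii)}, whereas the paper re-derives it as $\Ker(\overline{\beta})\cap \mathcal{M}(\Lien,\Lieq,\delta)=0$ by repeating the finite-dimensionality argument for the restriction of $\overline{\beta}$; since $\Ker(\overline{\beta})$ is precisely your $(\overline{\Liet_i},\overline{\Lies_i},\overline{\mu})$, the two arguments coincide.
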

\begin{proof}
	{\it (i)} 	Let $(f):0 \longrightarrow (\mathfrak{u},\Lier,\mu) \longrightarrow (\mathfrak{m},\Lief,\mu) \stackrel{\pi=(\pi_1,\pi_2)} \longrightarrow (\Lien,\Lieq,\delta) \longrightarrow 0$ be a free presentation of
	$(\Lien,\Lieq,\delta)$. By applying a  similar argument to the proof  of  \cite[Theorem 3.7]{MSS}, we can show that the crossed modules $$[(\Lieh_i,\Liep_i,\sigma_i),(\Lieh_i,\Liep_i,\sigma_i)],\ (\Lieh_i,\Liep_i,\sigma_i)/Z(\Lieh_i,\Liep_i,\sigma_i), \ \text{and}\ Z(\Lieh_i,\Liep_i,\sigma_i)/(\Liea_i,\Lieb_i,\sigma_i),$$
are uniquely  determined by the free presentation $(f)$.

By virtue of Proposition \ref{prop 4.2} {\it (ii)} and Theorem \ref{Th 4.1} {\it (ii)}, there is a surjective homomorphism $\overline{\beta}: (\bar{\mathfrak{m}},\bar{\Lief},\bar\mu)\longrightarrow
	(\Lieh_1,\Liep_1,\sigma_1)$ such that $\bar{\beta} \left( {\cal M}(\Lien,\Lieq,\delta) \right) = (\Liea_1, \Lieb_1, \sigma_1)$. Since ${\cal M}(\Lien,\Lieq,\delta)$ is finite dimensional, then the restriction of $\bar{\beta}$ from ${\cal M}(\Lien,\Lieq,\delta)$ onto $(\Liea_1, \Lieb_1, \sigma_1)$ is an isomorphism. Thus, $0 = \Ker(\bar{\beta}_{\mid}) = \Ker(\bar{\beta}) \cap {\cal M}(\Lien,\Lieq,\delta)$, and it implies that $\Ker(\bar\beta) \cap [(\bar{\mathfrak{m}},\bar{\Lief},\bar\mu), (\bar{\mathfrak{m}},\bar{\Lief},\bar\mu)]=0$.  It is easy to see that $\bar \beta$ induces the surjective homomorphism $\hat{\beta}:[(\bar{\mathfrak{m}},\bar{\Lief},\bar\mu), (\bar{\mathfrak{m}},\bar{\Lief},\bar\mu)] \longrightarrow [(\Lieh_1,\Liep_1,\sigma_1),(\Lieh_1,\Liep_1,\sigma_1)]$ with $\Ker(\hat{\beta})=\Ker(\bar\beta) \cap [(\bar{\mathfrak{m}},\bar{\Lief},\bar\mu), (\bar{\mathfrak{m}},\bar{\Lief},\bar\mu)]=0$. Therefore, we have $[(\Lieh_1,\Liep_1,\sigma_1), (\Lieh_1,\Liep_1,\sigma_1)]$ $\cong [(\bar{\mathfrak{m}},\bar{\Lief},\bar\mu),(\bar{\mathfrak{m}},\bar{\Lief},\bar\mu)]$.
	
\noindent {\it (ii), (iii)}	Now, put $\Ker(\bar\beta)=(\mathfrak{\bar{t},\bar{s},\bar{\mu})=(t,s,\mu)/[(u,r,\mu),(m,f,\mu)]}$ and $Z(\mathfrak{\bar{m},\bar{f},\bar{\mu}})=(\mathfrak{\bar{k},\bar{l},\bar{\mu}})=\mathfrak{(k,l,\mu)/[(u,r,\mu),(m,f,\mu)]}.$  We claim that it is sufficient to prove that $Z(\mathfrak{m/t,f/s,\bar\mu}) =\mathfrak{(k/t,l/s/\bar\mu)}$. Hence we have
	$$\frac{\mathfrak{(h_1,p_1,\sigma_1)}}{Z(\mathfrak{h_1,p_1,\sigma_1})}\cong \frac{{(\mathfrak{\bar{m},\bar{f},\bar{\mu}})}/{\Ker(\bar{\beta})}}{\mathfrak{(k/t,l/s/\bar\mu)}} \cong\left( \mathfrak{\frac{m}{k},\frac{f}{l},\bar\mu} \right),$$
 $$\frac{Z(\Lieh_1,\Liep_1,\sigma_1)}{(\Liea_1,\Lieb_1,\sigma_1)} \cong \frac{\mathfrak{(k/t,l/s/\bar\mu)}}{(\bar{\Lieu}, \bar{\Lier},\bar{\mu})/(\Ker(\bar{\beta})} \cong \left( \mathfrak{\frac{k}{u}},\frac{l}{r},\bar\mu \right),$$
	and we get exactly what we want to prove.
	
	To prove our assertion, clearly, $\Ker(\bar{\beta})\subseteq  ({\bar{\Lieu},\bar{\Lier},\bar{\mu}}) \subseteq Z(\mathfrak{\bar{m},\bar{f},\bar{\mu}})$ and $\mathfrak{[(k,l,\mu)},$ $(\Liem,\Lief,\mu)] \subseteq (\Liet,\Lies,\mu)$. So, $\mathfrak{\left(k/t,l/s,\mu \right)}\subseteq Z\left( \mathfrak{m/t,f/s,\bar\mu} \right)$. To prove the opposite content, assume $Z\left( \mathfrak{m/t,f/s,\bar\mu} \right)=\left( \mathfrak{x/t,z/s,\bar\mu} \right)$, then by the assumptions we have
	$$\mathfrak{[(x,z,\mu),(m,f,\mu)]\subseteq (t,s,\mu)\cap [(m,f,\mu),(m,f,\mu)]=[(u,r,\mu),(m,f,\mu)]}.$$
	 Therefore,
	 	$$\mathfrak{\frac{(x,z,\mu)}{[(u,r,\mu),(m,f,\mu)]} \subseteq}  Z\left( \mathfrak{\frac{(m,f,\mu)}{[(u,r,\mu),(m,f,\mu)]}} \right) =  \mathfrak{\frac{(k,l,\mu)}{[(u,r,\mu),(m,f,\mu)]}}$$
 and so $Z\left( \mathfrak{m/t,f/s,\bar\mu} \right) =\mathfrak{\left(k/t,l/s/\bar\mu \right)}$. The proof is complete.	
\end{proof}
\begin{remark} \label{rem4.8}
	The above Corollary \ref{iso} shows that every perfect crossed module admits only one isomorphism class of stem covers.
\end{remark}

\begin{theorem}
  The central extension $(e) : 0 \to \Ker(\phi) \to (\Lieq \curlywedge \Lien,\Lieq \curlywedge \Lieq, \delta \curlywedge \Id) \stackrel{\phi} \to (\Lien,\Lieq,\delta) \to 0$ is a stem cover of $(\Lien,\Lieq,\delta)$ if and only if $(\Lien,\Lieq,\delta)$ is perfect.
\end{theorem}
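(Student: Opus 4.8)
The plan is to handle both directions through a single elementary remark about the surjectivity of $\phi=(\bar{\lambda}_{\Lien},\bar{\mu}_{\Lieq})$, and then to reduce the only substantive point to the perfectness of the total crossed module. First I would record, straight from $\bar{\mu}_{\Lieq}(q\curlywedge q')=[q,q']$ and $\bar{\lambda}_{\Lien}(q\curlywedge n)={}^{q}n$, $\bar{\lambda}_{\Lien}(n\curlywedge q)=n^{q}$, that $\im(\bar{\mu}_{\Lieq})=[\Lieq,\Lieq]$ and $\im(\bar{\lambda}_{\Lien})=D_{\Lieq}(\Lien)$. Hence $\phi$ is surjective if and only if $\Lieq=[\Lieq,\Lieq]$ and $\Lien=D_{\Lieq}(\Lien)$, that is, if and only if $(\Lien,\Lieq,\delta)$ is perfect. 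Since a stem cover is in particular a central extension, its right-hand map must be an epimorphism; so if $(e)$ is a stem cover, then $\phi$ is surjective and $(\Lien,\Lieq,\delta)$ is perfect. This settles the ``only if'' direction.

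For the converse, assume $(\Lien,\Lieq,\delta)$ is perfect. Then $\phi$ is surjective, so $(e)$ genuinely is a short exact sequence; it is central by Proposition \ref{central}\,{\it (iii)}, and $\Ker(\phi)\cong{\cal M}(\Lien,\Lieq,\delta)$ by Corollary \ref{cor3.1}. Thus the covering condition holds automatically, and $(e)$ is a stem cover exactly when it is a stem extension. Invoking the equivalence of {\it (a)} and {\it (d)} in Proposition \ref{4.1}\,{\it (i)} and using $(\Lien,\Lieq,\delta)_{\rm ab}=0$, the extension $(e)$ is a stem extension if and only if the abelianization map $(\Lieq\curlywedge\Lien,\Lieq\curlywedge\Lieq,\Id\curlywedge\delta)_{\rm ab}\to(\Lien,\Lieq,\delta)_{\rm ab}$ is an isomorphism, i.e. if and only if the total crossed module $(\Lieq\curlywedge\Lien,\Lieq\curlywedge\Lieq,\Id\curlywedge\delta)$ is perfect. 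So the whole statement reduces to showing that this total object is perfect whenever $(\Lien,\Lieq,\delta)$ is.

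To prove perfectness I would take a free presentation $0\to(\Lieu,\Lier,\mu)\to(\Liem,\Lief,\mu)\stackrel{\pi}\to(\Lien,\Lieq,\delta)\to0$ and use the description established just before Corollary \ref{cor3.1}, which identifies $(\Lieq\curlywedge\Lien,\Lieq\curlywedge\Lieq,\Id\curlywedge\delta)$ with $(\Liem,\Lief,\mu)'/[(\Lieu,\Lier,\mu),(\Liem,\Lief,\mu)]$. Since $\pi$ is surjective and $(\Lien,\Lieq,\delta)$ is perfect, $\pi\big((\Liem,\Lief,\mu)'\big)=(\Lien,\Lieq,\delta)'=(\Lien,\Lieq,\delta)$, whence $(\Liem,\Lief,\mu)=(\Liem,\Lief,\mu)'+(\Lieu,\Lier,\mu)$. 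Substituting this into $[(\Liem,\Lief,\mu),(\Liem,\Lief,\mu)]$ and expanding by bi-additivity of the commutator of crossed ideals, every term that contains $(\Lieu,\Lier,\mu)$ falls inside $[(\Lieu,\Lier,\mu),(\Liem,\Lief,\mu)]$, so that
\[
(\Liem,\Lief,\mu)'=(\Liem,\Lief,\mu)''+[(\Lieu,\Lier,\mu),(\Liem,\Lief,\mu)].
\]
Passing to the quotient by $[(\Lieu,\Lier,\mu),(\Liem,\Lief,\mu)]$ shows that the total crossed module coincides with its own derived crossed module, i.e. it is perfect, which completes the converse.

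The main obstacle is precisely this last perfectness claim; everything else is bookkeeping with the five-term sequence (\ref{five term}) and the surjectivity remark. Its heart is the commutator identity above, which rests on the distributivity and monotonicity of the commutator of crossed ideals in the semi-abelian category $\mathbf{XLb}$. An alternative, more computational route would establish $\Lieq\curlywedge\Lieq=[\Lieq\curlywedge\Lieq,\Lieq\curlywedge\Lieq]$ and $\Lieq\curlywedge\Lien=D_{\Lieq\curlywedge\Lieq}(\Lieq\curlywedge\Lien)$ directly from relations $(3c)$, $(4a)$ and $(5a)$ of Definition \ref{non abelian}, using $\Lieq=[\Lieq,\Lieq]$ and $\Lien=D_{\Lieq}(\Lien)$; this works but is considerably longer, so I would prefer the free-presentation argument.
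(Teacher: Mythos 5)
Your proposal is correct, and its ``only if'' half is the paper's own argument: both read off $\im(\bar{\mu}_{\Lieq})=[\Lieq,\Lieq]$ and $\im(\bar{\lambda}_{\Lien})=D_{\Lieq}(\Lien)$, so that a stem cover forces $\phi$ to be onto and hence $(\Lien,\Lieq,\delta)$ to be perfect. Where you genuinely diverge is the key step of the converse, namely the perfectness of $(\Lieq\curlywedge\Lien,\Lieq\curlywedge\Lieq,\Id\curlywedge\delta)$. The paper proves this by a direct element computation inside the exterior product: writing $q=[q_1,q_2]$ and $n={}^{q'}n'$, relations (3c) and (4a) of Definition \ref{non abelian} together with the definition of the action of $\Lieq\curlywedge\Lieq$ on $\Lieq\curlywedge\Lien$ give $q\curlywedge n={}^{q_1\curlywedge q_2}(q'\curlywedge n')\in D_{\Lieq\curlywedge\Lieq}(\Lieq\curlywedge\Lien)$, and similarly $\Lieq\curlywedge\Lieq=[\Lieq\curlywedge\Lieq,\Lieq\curlywedge\Lieq]$; your assessment that this route is ``considerably longer'' is off the mark, as it is essentially a one-line calculation. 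You instead pull back to a free presentation, identify the total object with $(\Liem,\Lief,\mu)'/[(\Lieu,\Lier,\mu),(\Liem,\Lief,\mu)]$ (a legitimate reading of the unnumbered theorem preceding Corollary \ref{cor3.1}, using injectivity of $\mu$ from Lemma \ref{projective}), and deduce perfectness from $(\Liem,\Lief,\mu)=(\Liem,\Lief,\mu)'+(\Lieu,\Lier,\mu)$ via the commutator identity $(\Liem,\Lief,\mu)'=(\Liem,\Lief,\mu)''+[(\Lieu,\Lier,\mu),(\Liem,\Lief,\mu)]$. That identity is sound, but its correct justification is the concrete generator-wise definition of the commutator in $\mathbf{XLb}$ (bilinearity of brackets and actions yields the needed sub-additivity and symmetry containments), rather than abstract semi-abelian generalities: join-distributivity of categorical commutators is not automatic in an arbitrary semi-abelian category. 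What your route buys is independence from the exterior-product relations and an argument that is really a general statement about stem extensions over perfect bases (in the spirit of Corollary \ref{cor4.1}), transferable to other Baer-invariant settings; what the paper's route buys is brevity and self-containedness. Finally, your assembly of the conclusion through Proposition \ref{4.1}\,{\it (i)} (a)$\Leftrightarrow$(d) is a slightly more explicit bookkeeping than the paper's direct appeal to Corollary \ref{cor3.1} and Proposition \ref{central}\,{\it (iii)}, but the content is the same.
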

\begin{proof}
  Let the extension $(e)$ be a stem cover of $(\Lien,\Lieq,\delta)$. Then we have $\im(\phi)=[(\Lien,\Lieq,\delta),(\Lien,\Lieq,\delta)]=(\Lien,\Lieq,\delta)$, so the crossed module $(\Lien,\Lieq,\delta)$ is perfect.

  Conversely, if $(\Lien,\Lieq,\delta)$ is perfect then $[\Lieq,\Lieq]=\Lieq$ and $D_{\Lieq}({\Lien})=\Lien$. For every $q\in \Lieq, n\in \Lien$, we can assume  $q=[q_1,q_2]$ and $n={^{q'}}{n'}$ for some $q_1,q_2, q' \in \Lieq$ and $n'\in \Lien$. Then
	$$q\curlywedge n=[q_1,q_2]\curlywedge {^{q'}}{n'}=[[q_1,q_2],q']\curlywedge n'- ^{[q_1,q_2]}{n'}\curlywedge q'=^{(q_1\curlywedge q_2)}q'\curlywedge n'\in D_{\Lieq \curlywedge \Lieq}{(\Lieq \curlywedge \Lien)},$$
	by the relations (4a) and (3c) in Definition \ref{non abelian}  and the Leibniz action of $\Lieq \curlywedge \Lieq$ on $\Lieq \curlywedge \Lien$. Consequently, $\Lieq \curlywedge \Lien \subseteq D_{\Lieq \curlywedge \Lieq}{(\Lieq \curlywedge \Lien)}$.

 Easily, $[\Lieq \curlywedge \Lieq,\Lieq \curlywedge \Lieq]=\Lieq \curlywedge \Lieq$, so the Leibniz  crossed module $(\Lien \curlywedge \Lieq,\Lieq \curlywedge \Lieq, \delta \curlywedge \Id)$ is perfect, now the result follows by  Corollary \ref{cor3.1} and Proposition \ref{central} {\it (iii)}.
\end{proof}

\begin{theorem}\label{yamazaki}
  Let $(\Lien,\Lieq,\delta)$ be a Leibniz crossed module such that  ${\cal M}(\Lien,\Lieq,\delta)$ is finite dimensional. If $(e): 0 \longrightarrow \mathfrak{(a,b,\sigma) \longrightarrow (h,p,\sigma) \stackrel{\varphi=(\varphi_1,\varphi_2)} \longrightarrow (n,q,\delta)} \longrightarrow 0$ is a stem extension of $(\Lien,\Lieq,\delta)$,
  then there is a stem cover $(e_1): 0 \longrightarrow \mathfrak{(a_{\rm 1},b_{\rm 1},\sigma_{\rm 1}) \longrightarrow}$ $\mathfrak{ (h_{\rm 1},p_{\rm 1},\sigma_{\rm 1}) \longrightarrow (n,q,\delta)} \longrightarrow 0 $ such that $(e)$ is homomorphic image of $(e_1)$.
\end{theorem}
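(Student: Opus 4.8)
The plan is to realise $(e)$ as a quotient of a stem cover manufactured from a free presentation of $(\Lien,\Lieq,\delta)$, with Proposition~\ref{prop 4.2} and Theorem~\ref{Th 4.1} as the main engines. First I would fix a free presentation $0 \longrightarrow (\mathfrak{u},\Lier,\mu) \longrightarrow (\mathfrak{m},\Lief,\mu) \overset{\pi}{\longrightarrow} (\Lien,\Lieq,\delta) \longrightarrow 0$ and pass to the central quotients $(\bar{\mathfrak{m}},\bar{\Lief},\bar\mu)$ and $(\bar{\mathfrak{u}},\bar{\Lier},\bar\mu)$ obtained by dividing by $[(\mathfrak{u},\Lier,\mu),(\mathfrak{m},\Lief,\mu)]$, so that $(\bar{\mathfrak{u}},\bar{\Lier},\bar\mu)$ is a central crossed ideal of $(\bar{\mathfrak{m}},\bar{\Lief},\bar\mu)$ containing ${\cal M}(\Lien,\Lieq,\delta)$. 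Since $(e)$ is a stem extension of $(\Lien,\Lieq,\delta)$, I would apply Proposition~\ref{prop 4.2} {\it (ii)} with the surjective homomorphism $\alpha = \Id_{(\Lien,\Lieq,\delta)}$; this yields a surjective homomorphism $\bar\beta : (\bar{\mathfrak{m}},\bar{\Lief},\bar\mu) \longrightarrow (\Lieh,\Liep,\sigma)$ with $\varphi \circ \bar\beta = \bar\pi$ and $\bar\beta({\cal M}(\Lien,\Lieq,\delta)) = (\Liea,\Lieb,\sigma)$, where $\bar\pi$ is induced by $\pi$.

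The crux is to choose a complement of ${\cal M}(\Lien,\Lieq,\delta)$ inside $(\bar{\mathfrak{u}},\bar{\Lier},\bar\mu)$ that is killed by $\bar\beta$. From $\varphi \circ \bar\beta = \bar\pi$ one gets $\Ker(\bar\beta) \subseteq \Ker(\bar\pi) = (\bar{\mathfrak{u}},\bar{\Lier},\bar\mu)$, and since $\bar\beta$ already maps ${\cal M}(\Lien,\Lieq,\delta)$ onto $(\Liea,\Lieb,\sigma)$, a one-line chase gives
\[
(\bar{\mathfrak{u}},\bar{\Lier},\bar\mu) = {\cal M}(\Lien,\Lieq,\delta) + \Ker(\bar\beta).
\]
By Proposition~\ref{prop 4.2} {\it (i)} the quotient $Q := (\bar{\mathfrak{u}},\bar{\Lier},\bar\mu)/{\cal M}(\Lien,\Lieq,\delta)$ embeds into $(\mathfrak{m},\Lief,\mu)_{\rm ab}$ and is hence projective in {\bf AbXmod} by Lemma~\ref{projective} {\it (ii)(a)}. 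The displayed equality makes the restriction to $\Ker(\bar\beta)$ of the projection $(\bar{\mathfrak{u}},\bar{\Lier},\bar\mu) \twoheadrightarrow Q$ surjective, so projectivity of $Q$ furnishes a section whose image $C \subseteq \Ker(\bar\beta)$ satisfies $(\bar{\mathfrak{u}},\bar{\Lier},\bar\mu) = {\cal M}(\Lien,\Lieq,\delta) \oplus C$; centrality of $(\bar{\mathfrak{u}},\bar{\Lier},\bar\mu)$ guarantees that $C$ is in fact a crossed ideal of $(\bar{\mathfrak{m}},\bar{\Lief},\bar\mu)$, because the ambient action on it is trivial.

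To finish, I would lift $C$ back to a crossed ideal $(\Liet,\Lies,\mu)$ of $(\mathfrak{m},\Lief,\mu)$ with $(\bar{\Liet},\bar{\Lies},\bar\mu) = C$, so that $(\bar{\mathfrak{u}},\bar{\Lier},\bar\mu) \cong {\cal M}(\Lien,\Lieq,\delta) \oplus (\bar{\Liet},\bar{\Lies},\bar\mu)$; Theorem~\ref{Th 4.1} {\it (i)} then certifies that $(e_1): 0 \longrightarrow (\mathfrak{u}/\Liet,\Lier/\Lies,\bar\mu) \longrightarrow (\mathfrak{m}/\Liet,\Lief/\Lies,\bar\mu) \longrightarrow (\Lien,\Lieq,\delta) \longrightarrow 0$ is a stem cover. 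Because $C = (\bar{\Liet},\bar{\Lies},\bar\mu) \subseteq \Ker(\bar\beta)$, the map $\bar\beta$ descends to a surjection $\tilde\beta : (\mathfrak{m}/\Liet,\Lief/\Lies,\bar\mu) \longrightarrow (\Lieh,\Liep,\sigma)$ which restricts to a surjection $(\mathfrak{u}/\Liet,\Lier/\Lies,\bar\mu) \twoheadrightarrow (\Liea,\Lieb,\sigma)$ and commutes with the projections to $(\Lien,\Lieq,\delta)$; the pair $(\tilde\beta,\Id)$ is then the required morphism exhibiting $(e)$ as a homomorphic image of $(e_1)$.

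I expect the main obstacle to be the complement construction of the middle paragraph: producing a single crossed ideal that is simultaneously a complement to the Schur multiplier and annihilated by $\bar\beta$. Everything hinges on the projectivity of $Q$ in {\bf AbXmod} (to split the relevant surjection) together with the centrality of $(\bar{\mathfrak{u}},\bar{\Lier},\bar\mu)$ (to promote the resulting crossed submodule to a crossed ideal); the finite dimensionality of ${\cal M}(\Lien,\Lieq,\delta)$ is the standing hypothesis under which Proposition~\ref{prop 4.2} and Theorem~\ref{Th 4.1} deliver a bona fide stem cover.
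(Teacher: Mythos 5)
Your proposal is correct and follows essentially the same route as the paper's proof: a projective presentation together with Proposition \ref{prop 4.2} \emph{(ii)} (applied with $\alpha = \Id$) yields the surjection $\bar\beta$, a complement to ${\cal M}(\Lien,\Lieq,\delta)$ lying inside $\Ker(\bar\beta)$ is split off using projectivity in {\bf AbXmod} of a crossed submodule of $(\mathfrak{m},\Lief,\mu)_{\rm ab}$ (the paper splits the canonically isomorphic quotient $\Ker(\bar\beta)\big/\bigl({\cal M}(\Lien,\Lieq,\delta)\cap\Ker(\bar\beta)\bigr)$ via Lemma \ref{projective} \emph{(ii)}), and Theorem \ref{Th 4.1} \emph{(i)} then delivers the stem cover $(e_1)$ mapping onto $(e)$. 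The only difference is cosmetic: your direct element chase establishing $(\bar{\mathfrak{u}},\bar{\Lier},\bar\mu)={\cal M}(\Lien,\Lieq,\delta)+\Ker(\bar\beta)$ bypasses the finite-dimensionality comparison the paper invokes at the corresponding step.
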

\begin{proof}
Assume that $0 \longrightarrow (\mathfrak{u},\Lier,\mu) \longrightarrow (\mathfrak{m},\Lief,\mu) \stackrel{\pi=(\pi_1, \pi_2)} \longrightarrow (\Lien,\Lieq,\delta) \longrightarrow 0$ is a projective presentation of $(\Lien,\Lieq,\delta)$. Thanks to Proposition \ref{prop 4.2} {\it (ii)}, there is a surjective homomorphism $\overline{\beta}: (\bar{\mathfrak{m}},\bar{\Lief},\bar\mu)\longrightarrow (\Lieh,\Liep,\sigma)$ such that $\gamma \circ \bar{\beta}= \bar{\pi}$ and $\bar{\beta}\mathfrak{(\bar{u},\bar{r},\bar{\mu})}=(\Liea,\Lieb,\sigma)$. Putting $\Ker(\bar{\beta})=(\bar{\Liet},\bar{\Lies},\bar{\mu})$, then we have
$$\frac{(\mathfrak{u},\Lier,\mu)}{(\Liet,\Lies,\mu)}\cong (\Liea,\Lieb,\sigma)\cong \frac{((\mathfrak{u},\Lier,\mu)\cap [(\mathfrak{m},\Lief,\mu), (\mathfrak{m},\Lief,\mu)])+(\Liet,\Lies,\mu)}{(\Liet,\Lies,\mu)},$$
so $(\mathfrak{u},\Lier,\mu)=(\mathfrak{u},\Lier,\mu) \cap [(\mathfrak{m},\Lief,\mu),(\mathfrak{m},\Lief,\mu)]+ (\Liet,\Lies,\mu)$, because
$(\Liea,\Lieb,\sigma)$ has finite dimension. On the other hand, the following exact sequence splits by Lemma \ref{projective} {\it (ii)}:
$$0 \to [(\bar{\mathfrak{m}},\bar{\Lief},\bar{\mu}),(\bar{\mathfrak{m}},\bar{\Lief},\bar{\mu})] \cap (\bar{\Liet},\bar{\Lies},\bar{\mu}) \to (\bar{\Liet},\bar{\Lies},\bar{\mu}) \to
\frac{(\bar{\Liet},\bar{\Lies},\bar{\mu})}{[(\bar{\mathfrak{m}},\bar{\Lief},\bar{\mu}),(\bar{\mathfrak{m}},\bar{\Lief},\bar{\mu})]\cap (\bar{\Liet},\bar{\Lies},\bar{\mu})} \to 0$$
Thus $(\bar{\Liet},\bar{\Lies},\bar{\mu})= ([(\bar{\mathfrak{m}},\bar{\Lief},\bar{\mu}), (\bar{\mathfrak{m}},\bar{\Lief},\bar{\mu})]
\cap (\bar{\Liet},\bar{\Lies},\bar{\mu})) \oplus (\bar{\Liet_1},\bar{\Lies_1},\bar{\mu})$, for some  crossed  ideal $(\bar{\Liet_1},\bar{\Lies_1},\bar{\mu})$ of
 $(\bar{\mathfrak{m}},\bar{\Lief},\bar\mu)$, where $(\bar{\Liet_1},\bar{\Lies_1},\bar{\mu}) = \frac{(\bar{\Liet},\bar{\Lies},\bar{\mu})}{[(\bar{\mathfrak{m}},\bar{\Lief},\bar{\mu}),(\bar{\mathfrak{m}},\bar{\Lief},\bar{\mu})] \cap (\bar{\Liet},\bar{\Lies},\bar{\mu})}$. It yields that
 $\mathfrak{(\bar{u},\bar{r},\bar{\mu})}={\cal M}(\Lien,\Lieq,\delta) \oplus (\bar{\Liet_1},\bar{\Lies_1},\bar{\mu})$ and so by Theorem \ref{Th 4.1} {\it (i)}, the extension $(e_1): 0 \to \mathfrak{(u/t_{\rm 1},r/s_{\rm 1},\bar{\mu})\to (m/t_{\rm 1},f/s_{\rm 1},\bar{\mu})}$
 $\mathfrak{\to (n,q,\delta)} \to 0$ is a stem cover of $\mathfrak{(n,q,\delta)}$
  and the extension $(e)$ is homomorphic image of $(e_1)$, as required.
\end{proof}

An immediate consequence of the above theorem gives  a new characterization of stem covers of finite-dimensional Leibniz crossed modules.

\begin{corollary} \label{stem iso}
A stem extension $(e)$ of a finite dimensional crossed module $(\Lien,\Lieq,\delta)$ is a stem cover if and only if any surjective homomorphism of other stem extension of $(\Lien,\Lieq,\delta)$ onto $(e)$ is an isomorphism.
\end{corollary}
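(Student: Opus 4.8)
The plan is to prove the two implications separately, treating the backward direction as a direct application of Theorem \ref{yamazaki} and the forward direction through the naturality of the five-term exact sequence (\ref{five term}). Throughout I use that a finite dimensional crossed module $(\Lien,\Lieq,\delta)$ has finite dimensional Schur multiplier ${\cal M}(\Lien,\Lieq,\delta)$, which is exactly the standing hypothesis needed to invoke both Theorem \ref{yamazaki} and Proposition \ref{4.1} {\it (ii)}.

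For the implication ``every surjection onto $(e)$ is an isomorphism $\Rightarrow$ $(e)$ is a stem cover'', I would simply feed the stem extension $(e)$ into Theorem \ref{yamazaki}: it produces a stem cover $(e_1)$ together with a surjective homomorphism of extensions $(e_1)\twoheadrightarrow(e)$ over the identity on $(\Lien,\Lieq,\delta)$. Since a stem cover is in particular a stem extension, $(e_1)$ is an admissible source for the hypothesis, which therefore forces this surjection to be an isomorphism. Hence $(e)\cong(e_1)$ is itself a stem cover, as claimed.

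For the converse, assume $(e)$ is a stem cover and let $(\Lambda,\lambda)\colon (e')\to(e)$ be a surjective homomorphism over the identity on $(\Lien,\Lieq,\delta)$, from another stem extension $(e')\colon 0\to(\Liea',\Lieb',\sigma')\to(\Lieh',\Liep',\sigma')\xrightarrow{\varphi'}(\Lien,\Lieq,\delta)\to 0$, so that $\varphi\circ\Lambda=\varphi'$ and $\Lambda$ restricts on kernels to $\lambda\colon(\Liea',\Lieb',\sigma')\to(\Liea,\Lieb,\sigma)$. First I would note that $\Ker(\Lambda)\subseteq\Ker(\varphi')=(\Liea',\Lieb',\sigma')$, so $\Ker(\Lambda)=\Ker(\lambda)$ and it suffices to prove $\lambda$ injective. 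Next I would invoke the naturality of the connecting map $\theta_{\ast}$: the morphism $(\Lambda,\lambda)$ induces the identity on ${\cal M}(\Lien,\Lieq,\delta)$ and hence yields $\lambda\circ\theta_{\ast}(e')=\theta_{\ast}(e)$. Because $(e)$ is a stem cover with ${\cal M}(\Lien,\Lieq,\delta)$ finite dimensional, Proposition \ref{4.1} {\it (ii)} makes $\theta_{\ast}(e)$ an isomorphism, while $(e')$ being a stem extension makes $\theta_{\ast}(e')$ surjective by Proposition \ref{4.1} {\it (i)}. Since the composite $\lambda\circ\theta_{\ast}(e')$ is an isomorphism, $\theta_{\ast}(e')$ is injective, hence an isomorphism, so $\lambda=\theta_{\ast}(e)\circ\theta_{\ast}(e')^{-1}$ is an isomorphism. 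Then $\Ker(\Lambda)=\Ker(\lambda)=0$, and as $\Lambda$ is onto it is an isomorphism.

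The point that needs care — and which I expect to be the main obstacle — is the naturality identity $\lambda\circ\theta_{\ast}(e')=\theta_{\ast}(e)$: I must make sure the sequence (\ref{five term}) is genuinely functorial in the central extension, so that a morphism over $\mathrm{id}_{(\Lien,\Lieq,\delta)}$ induces the identity on the multiplier term and makes the $\theta_{\ast}$-square commute. This is precisely the \emph{natural} exactness asserted in Theorem \ref{th3.2}, and it is the hinge that turns the abstract surjectivity/isomorphism properties of $\theta_{\ast}$ into the concrete conclusion about $\lambda$. The finite-dimensionality hypothesis enters only to upgrade $\theta_{\ast}(e)$ from an epimorphism to an isomorphism for a stem cover, and to license the appeal to Theorem \ref{yamazaki} in the backward direction.
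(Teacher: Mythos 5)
Your proof is correct, but your forward direction takes a genuinely different route from the paper's. The backward implication is identical in both: feed the stem extension $(e)$ into Theorem \ref{yamazaki} and use the hypothesis to force the resulting surjection from a stem cover onto $(e)$ to be an isomorphism. For the forward implication, the paper invokes Theorem \ref{yamazaki} a second time: given a surjection $\alpha\colon (e_1)\to (e)$ from a stem extension onto the stem cover $(e)$, it produces a stem cover $(e_2)$ and a surjection $\beta\colon (e_2)\to (e_1)$, and then observes that $\alpha\circ\beta$ is a surjection between two stem covers whose middle terms have the same finite dimension, namely $({\rm dim}\,\Lien,{\rm dim}\,\Lieq)$ plus the dimension of ${\cal M}(\Lien,\Lieq,\delta)$; hence $\alpha\circ\beta$, and therefore $\alpha$, is an isomorphism. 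You instead argue directly on kernels through the connecting map of sequence (\ref{five term}): naturality gives $\lambda\circ\theta_{\ast}(e')=\theta_{\ast}(e)$, Proposition \ref{4.1} \emph{(ii)} makes $\theta_{\ast}(e)$ an isomorphism while Proposition \ref{4.1} \emph{(i)} makes $\theta_{\ast}(e')$ surjective, so $\theta_{\ast}(e')$ and then $\lambda$ and $\Lambda$ are isomorphisms. What each approach buys: the paper's argument is more elementary (a dimension count) but genuinely uses finite-dimensionality of $(\Lien,\Lieq,\delta)$ itself; yours uses only finite-dimensionality of ${\cal M}(\Lien,\Lieq,\delta)$ (which, as you correctly note, follows from the stated hypothesis), so it in fact proves the corollary under the weaker hypothesis of Theorem \ref{yamazaki}. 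The price is the appeal to naturality of $\theta_{\ast}$ with respect to morphisms of central extensions over the identity: the paper asserts this (the word ``natural'' in Theorem \ref{th3.2}, and the Baer-invariant machinery of \cite{EVDL2} behind sequence (\ref{five term})) but never spells it out; it can also be extracted from the lifting construction of Proposition \ref{prop 4.2} \emph{(ii)}, since composing a lift for $(e')$ with $\Lambda$ yields a lift for $(e)$. You flagged exactly this point as the hinge of your argument, which is the right place to be careful; it is a legitimate, if implicit, ingredient of the paper's framework.
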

\begin{proof}
	The sufficient condition follows from Theorem \ref{yamazaki}.
	 For necessary condition, 	let $(e): 0 \longrightarrow  \mathfrak{(a,b,\sigma) \longrightarrow  (h,p,\sigma) \longrightarrow  (n,q,\delta)} \longrightarrow  0$ be a stem cover of $(\Lien,\Lieq,\delta)$ and $(e_1)$ be a stem extension of $(\Lien,\Lieq.\delta)$ such that there is a surjective homomorphism $\alpha = (\alpha_1, \alpha_2) : (e_1) \longrightarrow (e)$. According to Theorem \ref{yamazaki}, we can find a stem cover $(e_2): 0 \longrightarrow  \mathfrak{(a_2,b_2,\sigma_2) \longrightarrow  (h_2,p_2,\sigma_2) \longrightarrow  (n,q,\delta)} \longrightarrow  0$ of $(\Lien,\Lieq.\delta)$ such that $(e_1)$ is homomorphic image of $(e_2)$ and $\beta = (\beta_1, \beta_2) : (e_2) \longrightarrow (e_1)$ is a surjective homomorphism. So, we have the surjective homomorphism $\alpha \circ \beta$ from $(e_2)$ onto $(e)$. Now, since $(\Lien,\Lieq,\delta)$ is finite dimensional, then we have
 $({\rm dim}(\Lieh_2), {\rm dim}(\Liep_2)) = ({\rm dim}(\Lien), {\rm dim}(\Lieq))+ ({\rm dim}(\Liea_2), {\rm dim}(\Lieb_2)) = ({\rm dim}(\Lien),{\rm dim}(\Lieq))+ ({\rm dim}(\Liea), {\rm dim}(\Lieb)) = ({\rm dim}(\Lieh), {\rm dim}(\Liep))$.
  Therefore, $\alpha \circ \beta$ is an isomorphism and then $\alpha$ is an isomorphism, as required.
\end{proof}


 \section{Connection with stem cover of Lie crossed modules} \label{connection Lie}
 In this section, we investigate the interplay between the notions of stem cover and the Schur multiplier of Leibniz crossed modules with the similar of these concepts for Lie crossed modules in \cite{CL}.

 \begin{theorem} \label{Th 5.1}
Let $(T,L,\tau)$ be a crossed module of Lie algebras  with finite dimensional Schur multiplier ${\cal M}(T,L,\tau)$ as Leibniz crossed module and let $(e): 0 \longrightarrow (A,B,\vartheta) \longrightarrow (M,P,\vartheta) \longrightarrow (T,L,\tau) \longrightarrow 0$ be a stem cover of $(T,L,\tau)$ in {\bf XLie}. Then there exists a stem cover $(e_1): 0 \longrightarrow \mathfrak{(a,b,\sigma)\longrightarrow  (h,p,\sigma) \longrightarrow} (T,L,\tau) \longrightarrow 0$ of Leibniz crossed modules, such that $(e)$ is homomorphic image of $(e_1)$.

Moreover, if $(T,L,\tau)$ is finite dimensional Lie crossed module then $\mathfrak{(h,p,\sigma)}_{\Lie}\cong (M,P,\vartheta)$.
\end{theorem}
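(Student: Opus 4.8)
The plan is to reduce the statement to the Leibniz covering theory already developed: first reinterpret the given Lie stem cover $(e)$ as a stem extension in $\mathbf{XLb}$, produce the desired Leibniz stem cover by Theorem \ref{yamazaki}, and then recover the Lie crossed module by applying the functor $(-)_{\Lie}$ and appealing to uniqueness of Lie stem covers.

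First I would regard $(T,L,\tau)$, and with it the whole sequence $(e)$, as living in $\mathbf{XLb}$. Because $M$ and $P$ are Lie algebras, the two Leibniz actions occurring in $(M,P,\vartheta)$ reduce to the Lie action up to sign, so the Leibniz commutator crossed ideal $[(M,P,\vartheta),(M,P,\vartheta)]$ and the Leibniz center $Z(M,P,\vartheta)$ coincide with their Lie counterparts. Hence the centrality $[(A,B,\vartheta),(M,P,\vartheta)]=0$ and the stem inclusion $(A,B,\vartheta)\subseteq[(M,P,\vartheta),(M,P,\vartheta)]$ carry over verbatim, so $(e)$ is a stem extension of $(T,L,\tau)$ in $\mathbf{XLb}$. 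Since ${\cal M}(T,L,\tau)$ is finite dimensional as a Leibniz crossed module by hypothesis, Theorem \ref{yamazaki} applies and yields a Leibniz stem cover
$$(e_1):\ 0 \longrightarrow (\Liea,\Lieb,\sigma) \longrightarrow (\Lieh,\Liep,\sigma) \stackrel{(\varphi_1,\varphi_2)}\longrightarrow (T,L,\tau) \longrightarrow 0$$
together with a surjective morphism $(e_1)\twoheadrightarrow(e)$ inducing the identity on $(T,L,\tau)$; this is exactly the assertion that $(e)$ is a homomorphic image of $(e_1)$, proving the first part.

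For the ``Moreover'', suppose $(T,L,\tau)$ is finite dimensional and let $f=(f_1,f_2):(\Lieh,\Liep,\sigma)\twoheadrightarrow(M,P,\vartheta)$ be the middle component of the covering morphism. As $(M,P,\vartheta)$ is a Lie crossed module and $(-)_{\Lie}$ is left adjoint to the inclusion $\mathbf{XLie}\hookrightarrow\mathbf{XLb}$, the map $f$ and the structural projection $(\Lieh,\Liep,\sigma)\to(T,L,\tau)$ both factor uniquely through the canonical quotient $(\Lieh,\Liep,\sigma)\to(\Lieh,\Liep,\sigma)_{\Lie}$, giving surjections $\bar f:(\Lieh,\Liep,\sigma)_{\Lie}\twoheadrightarrow(M,P,\vartheta)$ and $g:(\Lieh,\Liep,\sigma)_{\Lie}\twoheadrightarrow(T,L,\tau)$ with $g$ compatible with $\bar f$ and the projection of $(e)$. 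I would then verify that $g$ exhibits $(\Lieh,\Liep,\sigma)_{\Lie}$ as a stem extension of $(T,L,\tau)$ in $\mathbf{XLie}$: since the quotient map is surjective it carries commutators to commutators and central ideals to central ideals, so the image of $(\Liea,\Lieb,\sigma)$ is central in $(\Lieh,\Liep,\sigma)_{\Lie}$ and contained in $[(\Lieh,\Liep,\sigma)_{\Lie},(\Lieh,\Liep,\sigma)_{\Lie}]$, while right exactness of $(-)_{\Lie}$ identifies $\Ker(g)$ with this image.

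Finally, $(\Lieh,\Liep,\sigma)_{\Lie}$ is finite dimensional, since its dimension is at most $\dim(\Lieh,\Liep,\sigma)=\dim(T,L,\tau)+\dim{\cal M}(T,L,\tau)$, so $\bar f$ is a surjective homomorphism from the stem extension $g$ onto the stem cover $(e)$ of the finite dimensional Lie crossed module $(T,L,\tau)$. Invoking the Lie-theoretic counterpart of Corollary \ref{stem iso} from \cite{CL}, such a morphism must be an isomorphism, which gives $(\Lieh,\Liep,\sigma)_{\Lie}\cong(M,P,\vartheta)$. I expect the main obstacle to be precisely the middle step, namely confirming that centrality and the stem inclusion are preserved by the merely right-exact functor $(-)_{\Lie}$ so that $g$ is a bona fide stem extension; once this is in place, both the existence part and the final isomorphism follow formally from Theorem \ref{yamazaki}, the adjunction defining $(-)_{\Lie}$, and the uniqueness of finite dimensional Lie stem covers.
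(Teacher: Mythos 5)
Your proposal is correct and follows the paper's skeleton almost exactly: view $(e)$ as a stem extension in $\mathbf{XLb}$ (using that Leibniz commutators and centers of Lie crossed modules reduce to the Lie ones), apply Theorem \ref{yamazaki} to get $(e_1)$ and the surjection $\beta$ onto $(e)$, factor $\beta$ through $(\Lieh,\Liep,\sigma)_{\Lie}$, and observe that $(\Lieh,\Liep,\sigma)_{\Lie}\to(T,L,\tau)$ is a stem extension in $\mathbf{XLie}$ --- all of this matches the paper step for step (the paper compresses your careful justifications into ``Clearly'' and ``Obviously''). The divergence is in the closing argument. The paper finishes concretely: it invokes the proof of Theorem 3.6 of \cite{RS} to produce a Lie stem cover $(M_1,P_1,\vartheta_1)$ surjecting onto $(\Lieh,\Liep,\sigma)_{\Lie}$, then uses isoclinism results (\cite[Corollary 3.5]{RS}, \cite[Proposition 22]{OUI}, \cite[Theorem 13]{Mon}) to get $M\cong M_1$, $P\cong P_1$, and concludes by composing surjections between finite-dimensional spaces of equal dimension. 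You instead invoke ``the Lie-theoretic counterpart of Corollary \ref{stem iso}'' as a black box. That counterpart is true and its proof is exactly the paper's closing argument minus the isoclinism detour (Lie version of Theorem \ref{yamazaki} from \cite[Theorem 3.6]{RS} plus the same dimension count), so your route is, if anything, cleaner and avoids three external citations. The one defect is your attribution: the counterpart of Corollary \ref{stem iso} is not in \cite{CL} (a 1993 paper on homology of Lie crossed modules predating this stem-cover machinery), nor is it proven in the present paper, so as written you are citing a statement that exists nowhere; to make the step rigorous you must either prove the Lie analogue of Corollary \ref{stem iso} (one paragraph, mimicking its proof with \cite[Theorem 3.6]{RS} in place of Theorem \ref{yamazaki}) or unfold it into the explicit argument the paper gives. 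This is a citation/packaging gap, not a mathematical one.
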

\begin{proof}
  Clearly, $(e)$ is a stem extension of $(T,L,\tau)$ in $\mathbf{XLb}$.
   According to Theorem \ref{yamazaki}, there exists a stem cover $(e_1): 0 \to \mathfrak{(a,b,\sigma) \to  (h,p,\sigma) \to} (T,L,\tau) \to 0$ and a surjective homomorphism $\beta=(\beta_1,\beta_2):(\Lieh,\Liep,\sigma) \twoheadrightarrow (M,P,\vartheta)$.
    Obviously, $\beta$ induces a surjective homomorphism $\bar{\beta}=(\bar{\beta}_1,\bar{\beta}_2):(\Lieh,\Liep,\sigma)_{\Lie}\longrightarrow (M,P,\vartheta)$.

    On the other hand, $0 \longrightarrow (\mathfrak{(a,b,\sigma)+(h,p,\sigma)}^{\rm ann})/(\Lieh,\Liep,\sigma)^{\rm ann} \longrightarrow (\Lieh,\Liep,\sigma)_{\Lie} \longrightarrow (T,L,\tau) \longrightarrow 0$ is a stem extension of $(T,L,\tau)$ in the  category  $\mathbf{XLie}$ of crossed modules in Lie algebras,
so by the proof of Theorem 3.6 in \cite{RS}, there is a surjective homomorphism $\alpha=(\alpha_1, \alpha_2)$ from $(M_1,P_1,\vartheta_1)$ to $(\Lieh,\Liep,\sigma)_{\Lie}$, where $(M_1,P_1,\vartheta_1)$ is a stem cover of $(T,L,\tau)$ in $\mathbf{XLie}$.
Now, combining \cite[Corollary 3.5]{RS} with \cite[Proposition 22]{OUI} and \cite[Theorem 13]{Mon}, we deduce that $M \cong M_1$ and $P \cong P_1$. So, the homomorphisms $M \longrightarrow M_1 \stackrel{\alpha_1} \longrightarrow \bar{\Lieh} \stackrel{\bar{\beta_1}} \longrightarrow M$ and $P \longrightarrow P_1 \stackrel{\alpha_2} \longrightarrow \Liep_{\Lie}\stackrel{\bar{\beta_2}} \longrightarrow P$ are surjective. Thus, they are isomorphism, in finite dimensional case.

It is easy to check that $\bar{\beta_1}$ and $\bar{\beta_2}$ are isomorphisms and so $\bar{\beta}$ is an isomorphism of crossed modules, as required.
\end{proof}

\begin{remark}
Theorem \ref{Th 5.1} applied to the particular Lie crossed modules $(T, T, \Id)$ or $(0, T, inc)$ recovers the corresponding results for Lie algebras provided in \cite[Theorem 3.4]{EV}.
\end{remark}

Note that in proof of the above Theorem \ref{Th 5.1} we have $\Ker(\beta) \subseteq (\Liea,\Lieb,\sigma)$.
 This fact and exact sequence (\ref{five term}) provide the following consequence:

 \begin{corollary}
 Under the assumptions of Theorem \ref{Th 5.1},  the exact sequence $(\hat{e}): 0 \to \Ker(\beta) \to (\Lieh,\Liep,\sigma) \to (M,P,\vartheta) \to 0$ is a stem extension of $(M,P,\vartheta)$ in the category  ${\bf XLb}$.

  Moreover, if $(T,L,\tau)$ is a  perfect Lie crossed module, then $(\hat{e})$ is a stem cover of $(M,P,\vartheta)$ in ${\bf XLb}$.
\end{corollary}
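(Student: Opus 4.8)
The plan is to check the two defining conditions of a stem extension for $(\hat e)$—centrality and containment of the kernel in the commutator crossed submodule—and then, under the perfectness hypothesis, to identify $\Ker(\beta)$ with the Schur multiplier by means of Corollary \ref{cor4.1}. The crucial input is the inclusion $\Ker(\beta) \subseteq (\Liea,\Lieb,\sigma)$ recorded just before the statement. Since $(e_1)$ is a stem cover it is in particular a central extension, so $(\Liea,\Lieb,\sigma) \subseteq Z(\Lieh,\Liep,\sigma)$ and hence $\Ker(\beta) \subseteq Z(\Lieh,\Liep,\sigma)$; this shows that $(\hat e)$ is central. Being a stem cover, $(e_1)$ is also a stem extension, so $(\Liea,\Lieb,\sigma) \subseteq [(\Lieh,\Liep,\sigma),(\Lieh,\Liep,\sigma)]$, whence $\Ker(\beta) \subseteq [(\Lieh,\Liep,\sigma),(\Lieh,\Liep,\sigma)]$ as well. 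Together these two inclusions establish that $(\hat e)$ is a stem extension of $(M,P,\vartheta)$ in $\mathbf{XLb}$, which is the first assertion.

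For the second assertion, suppose $(T,L,\tau)$ is perfect. First I would note that perfectness (and, more generally, the commutator, the center and the abelianization) of a Lie crossed module is insensitive to whether the object is read in $\mathbf{XLie}$ or in $\mathbf{XLb}$: for a Lie crossed module the two one-sided Leibniz actions satisfy ${}^{l}t=-t^{l}$, so $D_{L}(T)$ coincides with the image of the Lie action and $[L,L]$ is the same bracket in both categories. Consequently $(T,L,\tau)$ is perfect as a Leibniz crossed module. Moreover, the given $\mathbf{XLie}$-stem cover $(e)$ is, as observed in the proof of Theorem \ref{Th 5.1}, a stem extension of $(T,L,\tau)$ in $\mathbf{XLb}$; applying Proposition \ref{4.1} {\it (i)(d)} to it, the induced map $(M,P,\vartheta)_{\rm ab} \to (T,L,\tau)_{\rm ab}$ is an isomorphism. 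Since $(T,L,\tau)_{\rm ab}=0$, we obtain $(M,P,\vartheta)_{\rm ab}=0$, i.e. $(M,P,\vartheta)$ is perfect in $\mathbf{XLb}$.

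Finally I would invoke Corollary \ref{cor4.1} twice. Applied to the stem cover $(e_1)$ of the perfect crossed module $(T,L,\tau)$, it gives $(\Lieh,\Liep,\sigma)_{\rm ab} = {\cal M}(\Lieh,\Liep,\sigma) = 0$. Applied now to the central extension $(\hat e)$, whose quotient $(M,P,\vartheta)$ is perfect, the same vanishing $(\Lieh,\Liep,\sigma)_{\rm ab} = {\cal M}(\Lieh,\Liep,\sigma) = 0$ forces $(\hat e)$ to be a stem cover of $(M,P,\vartheta)$, as desired.

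The main obstacle is the compatibility claim of the second paragraph: one must verify carefully that the $\mathbf{XLb}$-commutator, center and abelianization of a Lie crossed module reduce to their $\mathbf{XLie}$ counterparts, so that perfectness genuinely transfers and Proposition \ref{4.1} and Corollary \ref{cor4.1}—both stated for Leibniz crossed modules—may be legitimately applied to the Lie objects $(T,L,\tau)$ and $(M,P,\vartheta)$. Once this is in place, the remainder is a routine chase through the inclusion $\Ker(\beta)\subseteq(\Liea,\Lieb,\sigma)$ together with the characterizations already at hand.
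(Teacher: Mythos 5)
Your proposal is correct, and its first half coincides with the paper's argument: both obtain the stem-extension property from the chain $\Ker(\beta) \subseteq (\Liea,\Lieb,\sigma) \subseteq Z(\Lieh,\Liep,\sigma) \cap [(\Lieh,\Liep,\sigma),(\Lieh,\Liep,\sigma)]$, which holds because $(e_1)$ is a stem cover. For the second half your route differs in its final step. The paper applies the five-term exact sequence (\ref{five term}) directly to $(\hat{e})$ (centrality makes its third term equal to $\Ker(\beta)$), and then, using the vanishing $(\Lieh,\Liep,\sigma)_{\rm ab} = \mathcal{M}(\Lieh,\Liep,\sigma) = 0$ supplied by Corollary \ref{cor4.1} applied to $(e_1)$, reads off the isomorphism $\Ker(\beta) \cong \mathcal{M}(M,P,\vartheta)$; the stem-cover condition is thus verified straight from the definition. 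You instead use that same vanishing but avoid writing the sequence: you first show $(M,P,\vartheta)$ is perfect in $\mathbf{XLb}$ (via Proposition \ref{4.1} \emph{(i)(d)} applied to $(e)$ regarded as an $\mathbf{XLb}$ stem extension — one could get this even faster by noting $(M,P,\vartheta)$ is a quotient of the perfect object $(\Lieh,\Liep,\sigma)$), and then invoke the converse direction of Corollary \ref{cor4.1} on the central extension $(\hat{e})$. Since that converse is itself proved from the six-term sequence, the two arguments rest on the same machinery; what the paper's version buys is the explicit identification $\Ker(\beta) \cong \mathcal{M}(M,P,\vartheta)$, while your version buys a cleaner reduction to an already-stated characterization, plus a more careful justification (antisymmetry of the Leibniz actions on a Lie crossed module) of the fact that commutators, centers and abelianizations agree in $\mathbf{XLie}$ and $\mathbf{XLb}$ — a point the paper dispatches in one unproved sentence.
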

\begin{proof}
 By the assumptions of Theorem \ref{Th 5.1} we have $\Ker(\beta) \subseteq (\Liea,\Lieb,\sigma)\subseteq Z(\Lieh,\Liep,\sigma)$ $\cap [(\Lieh,\Liep,\sigma),(\Lieh,\Liep,\sigma)]$. So  $(\hat{e})$ is stem extension. By application of  sequence (\ref{five term}), we obtain the following exact sequence:
	$$\mathcal{M}(\Lieh,\Liep,\sigma)\longrightarrow \mathcal{M}(M,P,\vartheta) \longrightarrow \Ker(\beta) \longrightarrow (\Lieh,\Liep,\sigma)_{\rm ab} \longrightarrow (M,P,\vartheta)_{\rm ab} \longrightarrow 0$$
Now if $(T,L,\tau)$ is a perfect Lie crossed module,  it also is a perfect Leibniz crossed module. Since $(\hat{e})$ is stem cover $(T,L,\tau)$, then thanks to Corollary \ref{cor4.1} we have $(\Lieh,\Liep,\sigma)_{\rm ab}= \mathcal{M}(\Lieh,\Liep,\sigma)=0$. Therefore, $\Ker(\beta) \cong \mathcal{(M,P,\vartheta)}$. The proof is complete.
\end{proof}

\begin{corollary} \label{cor5.4}
Let $(T,L,\tau)$ be a Lie crossed module.  Then ${\cal M}^{\Lie} (T,L,\tau)$ (the Schur multiplier in {\bf XLie} \cite{CL}) is homomorphic image of ${\cal M}(T,L,\tau)$.
\end{corollary}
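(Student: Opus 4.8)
The plan is to transport a projective presentation from $\mathbf{XLb}$ to $\mathbf{XLie}$ by means of the Lie-ification functor $\lie$ and then to compare the two resulting Baer invariants. First I would choose a free presentation $0 \to R \to F \stackrel{\pi}{\longrightarrow} (T,L,\tau) \to 0$ of $(T,L,\tau)$ in $\mathbf{XLb}$, writing $F = (\mathfrak{m},\Lief,\mu)$ and $R = (\mathfrak{u},\Lier,\mu)$, so that $\mathcal{M}(T,L,\tau) = \frac{R \cap [F,F]}{[R,F]}$. The functor $\lie : \mathbf{XLb} \to \mathbf{XLie}$ is left adjoint to the inclusion $\mathbf{XLie} \hookrightarrow \mathbf{XLb}$, and since that inclusion preserves regular epimorphisms, $\lie$ preserves projective objects, by exactly the reasoning used for the abelianization functor in Lemma \ref{projective} {\it (ii)(a)}. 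Hence $F_{\Lie}$ is projective in $\mathbf{XLie}$. As $(T,L,\tau)$ is already a Lie crossed module, $(T,L,\tau)_{\Lie} = (T,L,\tau)$, so applying $\lie$ to the presentation yields a projective presentation $0 \to \bar R \to F_{\Lie} \to (T,L,\tau) \to 0$ in $\mathbf{XLie}$, where $\bar R$ is the image of $R$. By the definition of the Schur multiplier in $\mathbf{XLie}$ \cite{CL}, $\mathcal{M}^{\Lie}(T,L,\tau) = \frac{\bar R \cap [F_{\Lie},F_{\Lie}]}{[\bar R, F_{\Lie}]}$, the commutators now being computed in $\mathbf{XLie}$.

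Next I would record two containments for the annihilator crossed ideal $F^{\rm ann}$. Since $F/R \cong (T,L,\tau)$ is a Lie crossed module and $F_{\Lie} = F/F^{\rm ann}$ is the universal Lie quotient of $F$, the map $\pi$ factors through $F_{\Lie}$, whence $F^{\rm ann} \subseteq R$; and since $F^{\rm ann}$ is generated by elements of the form $[n,n] = {}^{\delta(n)}n$, ${}^q n + n^q$ and $[q,q]$, all of which lie in the derived crossed module, we also have $F^{\rm ann} \subseteq [F,F]$. Let $\eta : F \twoheadrightarrow F_{\Lie}$ be the canonical surjection. Being a surjective homomorphism of crossed modules, $\eta$ carries commutator crossed ideals onto commutator crossed ideals and the Leibniz bracket onto the Lie bracket, so $\eta(R) = \bar R$, $\eta([F,F]) = [F_{\Lie},F_{\Lie}]$ and $\eta([R,F]) = [\bar R, F_{\Lie}]$. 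Consequently $\eta$ maps $R \cap [F,F]$ into $\bar R \cap [F_{\Lie},F_{\Lie}]$ and $[R,F]$ into $[\bar R, F_{\Lie}]$, and therefore induces a natural homomorphism $\mathcal{M}(T,L,\tau) \longrightarrow \mathcal{M}^{\Lie}(T,L,\tau)$.

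It then remains to prove surjectivity. Given $\bar x \in \bar R \cap [F_{\Lie},F_{\Lie}]$, I would lift it to some $x \in R$, which is possible because $\eta(R) = \bar R$. Since $\eta(x) \in [F_{\Lie},F_{\Lie}] = \eta([F,F])$, we get $x \in [F,F] + F^{\rm ann}$, and because $F^{\rm ann} \subseteq [F,F]$ this forces $x \in [F,F]$. Thus $x \in R \cap [F,F]$ and $\bar x = \eta(x)$ lies in $\eta(R \cap [F,F])$, so the induced map hits the full numerator of $\mathcal{M}^{\Lie}(T,L,\tau)$ and is therefore onto. Hence $\mathcal{M}^{\Lie}(T,L,\tau)$ is a homomorphic image of $\mathcal{M}(T,L,\tau)$.

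The main obstacle I anticipate is the bookkeeping around the Lie-ification of the presentation: verifying that $\lie$ indeed preserves projectivity, so that $F_{\Lie}$ may legitimately be used to compute $\mathcal{M}^{\Lie}$, and checking that the commutator and annihilator crossed ideals behave as claimed under $\eta$. The two containments $F^{\rm ann} \subseteq R$ and $F^{\rm ann} \subseteq [F,F]$ are precisely what make the lift $x$ land inside $R \cap [F,F]$, and the step $\eta([F,F]) = [F_{\Lie},F_{\Lie}]$ requires confirming that the Leibniz commutator of crossed modules descends to the Lie commutator in the reflection. Reconciling the present Baer-invariant expression with the definition of $\mathcal{M}^{\Lie}$ in \cite{CL} is the remaining point requiring care.
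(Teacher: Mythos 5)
Your proof is correct, so the main thing to report is how it sits relative to the paper. The paper offers no standalone argument for Corollary~\ref{cor5.4}: it is placed so as to fall out of the stem-cover comparison of Theorem~\ref{Th 5.1}, where the surjection $\beta \colon (\Lieh,\Liep,\sigma) \twoheadrightarrow (M,P,\vartheta)$ from a Leibniz stem cover onto a Lie stem cover of $(T,L,\tau)$ restricts on kernels to a surjection $(\Liea,\Lieb,\sigma) \cong {\cal M}(T,L,\tau) \twoheadrightarrow (A,B,\vartheta) \cong {\cal M}^{\Lie}(T,L,\tau)$; note that this derivation inherits the finite-dimensionality hypothesis on ${\cal M}(T,L,\tau)$ and the existence of stem covers in both categories. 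Your argument is instead, essentially verbatim, the proof the paper gives for its \emph{final} theorem, which extends this corollary to arbitrary Leibniz crossed modules (${\cal M}^{\Lie}\bigl((\Lien,\Lieq,\delta)_{\Lie}\bigr)$ is an image of ${\cal M}(\Lien,\Lieq,\delta)$): Lie-ify a projective presentation, observe that $(\Liem,\Lief,\mu)_{\Lie}$ is again projective, and compare the two Baer invariants via the canonical projection; your case is the specialization $(T,L,\tau)_{\Lie}=(T,L,\tau)$, using $F^{\rm ann}\subseteq R$. What your route buys: it proves the corollary in exactly the stated generality, with no finiteness assumption (which the corollary does not impose, so your proof matches the statement better than the stem-cover derivation does), and it makes explicit the one point the paper's proof of the general theorem passes over silently, namely why the induced map on multipliers is surjective --- your observation that $F^{\rm ann} \subseteq [F,F]$ forces a lift of an element of $\bar R \cap [F_{\Lie},F_{\Lie}]$ to land in $R \cap [F,F]$. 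What the paper's route buys is the link between the two multipliers and concrete covering data, which is the theme of Section~\ref{connection Lie}; but as a proof of the corollary as stated, yours is the more robust one.
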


\begin{corollary}
	Let $(T,L,\tau)$ be a perfect Lie crossed module  with finite dimensional Schur multiplier ${\cal M}(T,L,\tau)$ as a Leibniz crossed module. Then the stem cover $(e):  0 \longrightarrow \mathfrak{(a,b,\sigma)\longrightarrow  (h,p,\sigma) \stackrel{\varphi} \longrightarrow} (T,L,\tau) \longrightarrow 0$ of $(T,L,\tau)$ in $\mathbf{XLb}$ is a stem cover of $(T,L,\tau)$ in $\mathbf{XLie}$, if and only if  $\mathcal{M}(T,L,\tau)\cong \mathcal{M}^{Lie}(T,L,\tau)$.
\end{corollary}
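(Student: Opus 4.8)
The plan is to prove the two implications separately, the forward one being essentially definitional and the converse resting on Theorem~\ref{Th 5.1}, the uniqueness of stem covers of perfect crossed modules (Remark~\ref{rem4.8}), and a dimension count fed by Corollary~\ref{cor5.4}.

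For the forward implication, suppose $(e)$ is a stem cover of $(T,L,\tau)$ in $\mathbf{XLie}$. By the very definition of a stem cover in $\mathbf{XLie}$ its kernel satisfies $(\Liea,\Lieb,\sigma)\cong{\cal M}^{\Lie}(T,L,\tau)$, while the standing hypothesis that $(e)$ is a stem cover in $\mathbf{XLb}$ gives $(\Liea,\Lieb,\sigma)\cong{\cal M}(T,L,\tau)$. Composing these two isomorphisms yields ${\cal M}(T,L,\tau)\cong{\cal M}^{\Lie}(T,L,\tau)$.

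For the converse, assume ${\cal M}(T,L,\tau)\cong{\cal M}^{\Lie}(T,L,\tau)$; this object is finite dimensional, since by Corollary~\ref{cor5.4} ${\cal M}^{\Lie}(T,L,\tau)$ is a homomorphic image of the finite dimensional ${\cal M}(T,L,\tau)$. I would fix a stem cover $(\tilde e)\colon 0\to(A,B,\vartheta)\to(M,P,\vartheta)\to(T,L,\tau)\to0$ of $(T,L,\tau)$ in $\mathbf{XLie}$, so that $(A,B,\vartheta)\cong{\cal M}^{\Lie}(T,L,\tau)$, and apply Theorem~\ref{Th 5.1} to it: this produces a Leibniz stem cover of $(T,L,\tau)$ having $(\tilde e)$ as a homomorphic image. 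Since $(T,L,\tau)$ is perfect, Remark~\ref{rem4.8} forces this Leibniz stem cover to be isomorphic to the given $(e)$, so there is a surjective homomorphism $\beta=(\beta_1,\beta_2)\colon(\Lieh,\Liep,\sigma)\twoheadrightarrow(M,P,\vartheta)$ compatible with the projections onto $(T,L,\tau)$, and the remark following Theorem~\ref{Th 5.1} records that $\Ker(\beta)\subseteq(\Liea,\Lieb,\sigma)$.

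The decisive step is then a dimension count. Because $\beta$ is surjective and compatible with the two projections onto $(T,L,\tau)$, its restriction to kernels is a surjection $(\Liea,\Lieb,\sigma)\twoheadrightarrow(A,B,\vartheta)$ whose kernel, by the inclusion above, is exactly $\Ker(\beta)$; hence $\dim{\cal M}^{\Lie}(T,L,\tau)=\dim(A,B,\vartheta)=\dim(\Liea,\Lieb,\sigma)-\dim\Ker(\beta)=\dim{\cal M}(T,L,\tau)-\dim\Ker(\beta)$. The hypothesis ${\cal M}(T,L,\tau)\cong{\cal M}^{\Lie}(T,L,\tau)$ together with finite dimensionality then forces $\dim\Ker(\beta)=0$, so $\Ker(\beta)=0$ and $\beta$ is an isomorphism. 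Consequently $(\Lieh,\Liep,\sigma)\cong(M,P,\vartheta)$ is a Lie crossed module (equivalently $(\Lieh,\Liep,\sigma)^{\rm ann}=0$) and $\beta$ identifies the whole extension $(e)$ with the Lie stem cover $(\tilde e)$; therefore $(e)$ is a stem cover of $(T,L,\tau)$ in $\mathbf{XLie}$. I expect the main obstacle to be the bookkeeping guaranteeing that the comparison map $\beta$ exists, is compatible with the projections, and satisfies $\Ker(\beta)\subseteq(\Liea,\Lieb,\sigma)$; once Theorem~\ref{Th 5.1}, Remark~\ref{rem4.8}, and Corollary~\ref{cor5.4} are invoked, the finite-dimensional count closes the argument at once.
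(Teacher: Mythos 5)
Your proposal is correct and takes essentially the same route as the paper: the paper also fixes a Lie stem cover, obtains a surjection $\beta$ from the Leibniz stem cover $(e)$ onto it (via Theorem~\ref{yamazaki} applied to the Lie stem cover viewed as a Leibniz stem extension, which is exactly the content of your appeal to Theorem~\ref{Th 5.1}, and Remark~\ref{rem4.8} for uniqueness over the perfect base), and then uses $\Ker(\beta)\subseteq(\Liea,\Lieb,\sigma)$ together with finite dimensionality to conclude that $\beta$ is an isomorphism precisely when $\mathcal{M}(T,L,\tau)\cong\mathcal{M}^{\Lie}(T,L,\tau)$. Your explicit dimension count and the purely definitional treatment of the forward implication are just slightly more detailed versions of what the paper compresses into its final ``if and only if'' sentence.
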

\begin{proof}
	Let $(e_1):  0 \longrightarrow (A,B,\sigma_1)\longrightarrow  (H,P,\sigma_1) \stackrel{\bar\varphi}\longrightarrow (T,L,\tau) \longrightarrow 0$ be a stem cover of $(T,L,\tau)$ in $\mathbf{XLie}$. Then $(e_1)$ is a stem extension of $(T,L,\tau)$ in $\mathbf{XLb}$, so by Theorem \ref{yamazaki} and Remark \ref{rem4.8}, there is a surjective homomorphism $\beta: (\Lieh,\Liep,\sigma)\longrightarrow (H,P,\sigma_1)$ such that the following diagram is commutative:
	\[ \xymatrix{
		0 \ar[r] & (\Liea,\Lieb,\sigma) \ar[r] \ar[d]^{\beta_{\mid}}& (\Lieh,\Liep,\sigma) \ar[r]^{{\varphi}} \ar[d]^{\beta} & (T,L,\tau) \ar[r] \ar@{=}[d] & 0\\
		0 \ar[r] & (A,B,\sigma_1) \ar[r] &(H,P,\sigma_1) \ar[r]^{\bar\varphi} & (T,L,\tau) \ar[r] & 0.
	} \]
	
	Note that, $\Ker(\beta) \subseteq \mathfrak{(a,b,\sigma)}$, so the  restriction  $\beta_{\mid}$ from $(\Liea,\Lieb,\sigma)$ onto $(A,B,\sigma_1)$ is an isomorphism if and only if $\beta$ is an isomorphism, Thus, by finiteness of $\mathcal{M}(\Liea,\Lieb,\sigma)$, we conclude $\mathcal{M}(T,L,\tau)\cong \mathcal{M}^{Lie}(T,L,\tau)$ if and only if $(H,P,\sigma_1) \cong (\Lieh,\Liep,\sigma)$. The result follows.
\end{proof}

Corollary \ref{cor5.4} is extended to  Leibniz crossed modules as follows:

\begin{theorem}
  Let $(\Lien,\Lieq,\delta)$ be a Leibniz crossed module.  Then ${\cal M}^{\Lie}((\Lien,\Lieq,\delta)_{\Lie})$ is homomorphic image of ${\cal M}(\Lien,\Lieq,\delta)$.
\end{theorem}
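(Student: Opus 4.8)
The plan is to produce an explicit surjective homomorphism $\bar q\colon {\cal M}(\Lien,\Lieq,\delta)\twoheadrightarrow {\cal M}^{\Lie}\big((\Lien,\Lieq,\delta)_{\Lie}\big)$ built from a single free presentation of $(\Lien,\Lieq,\delta)$ together with the reflection $(-)_{\Lie}$. Write $T=(\Lien,\Lieq,\delta)_{\Lie}$. First I would fix a free presentation $0\to(\Lieu,\Lier,\mu)\to(\Liem,\Lief,\mu)\stackrel{\pi}{\longrightarrow}(\Lien,\Lieq,\delta)\to 0$ in $\mathbf{XLb}$ and apply $(-)_{\Lie}$, obtaining the quotient map $q\colon(\Liem,\Lief,\mu)\to(\Liem,\Lief,\mu)_{\Lie}$ whose kernel is $(\Liem,\Lief,\mu)^{\rm ann}$. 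Exactly as the abelianization functor is treated in the proof of Lemma \ref{projective}\emph{(ii)(a)}, the functor $(-)_{\Lie}$ is left adjoint to the inclusion $\mathbf{XLie}\hookrightarrow\mathbf{XLb}$, hence preserves projectives, so $(\Liem,\Lief,\mu)_{\Lie}$ is free in $\mathbf{XLie}$. Therefore $0\to(\Lieu',\Lier',\mu')\to(\Liem,\Lief,\mu)_{\Lie}\to T\to 0$, with $(\Lieu',\Lier',\mu')=\Ker\big((\Liem,\Lief,\mu)_{\Lie}\to T\big)$, is a free presentation of $T$ in $\mathbf{XLie}$, so that ${\cal M}^{\Lie}(T)$ is computed as the corresponding Baer invariant of \cite{CL}.

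Next I would record three elementary facts about $(-)^{\rm ann}$. (a) The ideal $(\Liem,\Lief,\mu)^{\rm ann}$ is generated by the commutator-type elements $[m,m]$, ${}^{f}m+m^{f}$ and $[f,f]$; since $[m,m]={}^{\mu(m)}m\in D_{\Lief}(\Liem)$ and $[f,f]\in[\Lief,\Lief]$, we get $(\Liem,\Lief,\mu)^{\rm ann}\subseteq[(\Liem,\Lief,\mu),(\Liem,\Lief,\mu)]$. (b) As $\pi$ is surjective and carries these generators to the analogous generators of $(\Lien,\Lieq,\delta)^{\rm ann}$, one has $\pi\big((\Liem,\Lief,\mu)^{\rm ann}\big)=(\Lien,\Lieq,\delta)^{\rm ann}$. (c) Being a surjection of crossed modules onto a Lie crossed module, $q$ intertwines the Leibniz commutator with the Lie commutator, whence $q\big([(\Liem,\Lief,\mu),(\Liem,\Lief,\mu)]\big)=[(\Liem,\Lief,\mu)_{\Lie},(\Liem,\Lief,\mu)_{\Lie}]$ and $q\big([(\Lieu,\Lier,\mu),(\Liem,\Lief,\mu)]\big)\subseteq[(\Lieu',\Lier',\mu'),(\Liem,\Lief,\mu)_{\Lie}]$. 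Since moreover $q(\Lieu,\Lier,\mu)\subseteq(\Lieu',\Lier',\mu')$, facts (a) and (c) show that $y\mapsto q(y)$ descends to a well-defined homomorphism $\bar q\colon{\cal M}(\Lien,\Lieq,\delta)\to{\cal M}^{\Lie}(T)$.

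The heart of the argument is the surjectivity of $\bar q$. Given a representative $x\in(\Lieu',\Lier',\mu')\cap[(\Liem,\Lief,\mu)_{\Lie},(\Liem,\Lief,\mu)_{\Lie}]$, I would use (c) to choose $y\in[(\Liem,\Lief,\mu),(\Liem,\Lief,\mu)]$ with $q(y)=x$. As $x$ lies in $\Ker\big((\Liem,\Lief,\mu)_{\Lie}\to T\big)$, the image of $\pi(y)$ in $T$ vanishes, i.e. $\pi(y)\in(\Lien,\Lieq,\delta)^{\rm ann}$; by (b) there is $z\in(\Liem,\Lief,\mu)^{\rm ann}=\Ker q$ with $\pi(z)=\pi(y)$. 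Then $y-z\in\Ker\pi=(\Lieu,\Lier,\mu)$, and by (a) $z\in[(\Liem,\Lief,\mu),(\Liem,\Lief,\mu)]$, so $y-z$ lies in the numerator $(\Lieu,\Lier,\mu)\cap[(\Liem,\Lief,\mu),(\Liem,\Lief,\mu)]$ of ${\cal M}(\Lien,\Lieq,\delta)$; finally $q(y-z)=x$ because $z\in\Ker q$, so $\bar q$ hits the class of $x$. This realizes ${\cal M}^{\Lie}\big((\Lien,\Lieq,\delta)_{\Lie}\big)$ as a homomorphic image of ${\cal M}(\Lien,\Lieq,\delta)$, and specializes to Corollary \ref{cor5.4} when $(\Lien,\Lieq,\delta)$ is already a Lie crossed module.

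I expect the main obstacle to be the bookkeeping in (c): one must verify that the Leibniz derived subcrossed module maps precisely onto the Lie derived one and that the commutator of crossed ideals in $\mathbf{XLb}$ and in $\mathbf{XLie}$ are identified by $q$, so that the numerator and denominator of the Leibniz multiplier land in the correct subobjects of the Lie multiplier; independence from the chosen presentation then follows from Baer invariance. It is worth emphasizing that the naive route through ${\cal M}(T)$ (the Leibniz multiplier of $T$ regarded as a Leibniz crossed module) fails, since the induced map ${\cal M}(\Lien,\Lieq,\delta)\to{\cal M}(T)$ need not be onto; it is exactly the additional passage that kills the image of $(\Liem,\Lief,\mu)^{\rm ann}$ which makes the target ${\cal M}^{\Lie}(T)$ attainable.
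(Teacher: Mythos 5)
Your proof is correct and follows essentially the same route as the paper: fix a projective presentation of $(\Lien,\Lieq,\delta)$, observe that applying $(-)_{\Lie}$ yields a projective presentation of $(\Lien,\Lieq,\delta)_{\Lie}$ in $\mathbf{XLie}$, and let the projection $(\Liem,\Lief,\mu)\to(\Liem,\Lief,\mu)_{\Lie}$ induce the surjection between the two Baer invariants. The only difference is that you spell out the well-definedness and surjectivity verifications (your facts (a)--(c) and the correction-by-$z$ argument) which the paper compresses into the single assertion that the induced map of commutators ``gives rise to a surjective homomorphism'' of multipliers.
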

\begin{proof}
  Let $0 \longrightarrow  (\mathfrak{u},\Lier,\mu) \longrightarrow (\mathfrak{m},\Lief,\mu) \stackrel{\pi}\longrightarrow (\Lien,\Lieq,\delta) \longrightarrow 0$ be a projective presentation of $(\Lien,\Lieq,\delta)$. Since, $\mathfrak{(m,f,\mu)}_{\Lie}$ is a projective crossed module in  $\mathbf{XLie}$, then we have the projective presentation
  $$0 \to \frac{\mathfrak{(u,r,\mu)}+\mathfrak{(m, f,\mu)}^{\rm ann}}{(\mathfrak{m},\Lief,\mu)^{\rm ann}}\to (\mathfrak{m},\Lief,\mu)_{\Lie}\to (\Lien,\Lieq,\delta)_{\Lie} \to 0$$
  of the Lie crossed module $(\Lien,\Lieq,\delta)_{\Lie}$. Clearly, the projection homomorphism $pr:(\mathfrak{m},\Lief,\mu)\to (\mathfrak{m},\Lief,\mu)_{\Lie}$ induces the surjective homomorphism $\bar{pr}:[(\mathfrak{m},\Lief,\mu),(\mathfrak{m},\Lief,\mu)]$ $\to [(\mathfrak{m},\Lief,\mu)_{\Lie},(\mathfrak{m},\Lief,\mu)_{\Lie}]$, which gives rise to a surjective homomorphism
 \[
\xymatrix{
{\cal M}(\Lien,\Lieq,\delta )=\frac{(\mathfrak{u},\Lier,\mu)\cap [(\mathfrak{m},\Lief,\mu),(\mathfrak{m},\Lief,\mu)]}{[(\mathfrak{u},\Lier,\mu),(\mathfrak{m},\Lief,\mu)]}
 \ar@{>>}[d]_{\tilde{pr}} &\\
 {\cal M}^{\Lie}((\Lien,\Lieq,\delta)_{\Lie}) = \frac{\overline{(\mathfrak{u},\Lier,\mu)}\cap [(\mathfrak{m},\Lief,\mu)_{\Lie},(\mathfrak{m},\Lief,\mu)_{\Lie}]}{[\overline{(\mathfrak{u},\Lier,\mu)},
  (\mathfrak{m},\Lief,\mu)_{\Lie}]},
  }
  \]
  where $\overline{(\mathfrak{u},\Lier,\mu)}=(\mathfrak{(u,r,\mu)}+\mathfrak{(m,f,\mu)}^{\rm ann})/(\mathfrak{m},\Lief,\mu)^{\rm ann}$, and the result follows.
\end{proof}



\section*{Acknowledgements}

First author was supported by  Agencia Estatal de Investigación (Spain), grant MTM2016-79661-P (AEI/FEDER, UE, support included).


\begin{center}

\end{center}

\end{document}